\let\pa\partial
\let\na\nabla
\let\eps\varepsilon
\newcommand{\N}{{\mathbb N}}
\newcommand{\R}{{\mathbb R}}
\newcommand{\diver}{\operatorname{div}}
\newcommand{\blue}[1]{{\color{blue}{#1}}}  
\newcommand{\E}{\mathcal E}
\newcommand{\T}{\mathcal T}
\newcommand{\aver}[1]{\{#1\}}
\newcommand{\jump}[1]{\ldbrack#1\rdbrack}
\newtheorem{theorem}{Theorem}
\newtheorem{lemma}[theorem]{Lemma}
\newtheorem{proposition}[theorem]{Proposition}
\newtheorem{remark}[theorem]{Remark}
\begin{document}

\title[A structure-preserving DG scheme]{A structure-preserving discontinuous Galerkin
scheme for the Fisher-KPP equation}

\author[F. Bonizzoni]{Francesca Bonizzoni}
\address{Faculty of Mathematics, University of Vienna, Oskar-Morgenstein-Platz 1,
1090 Wien, Austria}
\email{francesca.bonizzoni@univie.ac.at}

\author[M. Braukhoff]{Marcel Braukhoff}
\address{Institute for Analysis and Scientific Computing, Vienna University of
	Technology, Wiedner Hauptstra\ss e 8--10, 1040 Wien, Austria}
\email{marcel.braukhoff@tuwien.ac.at}

\author[A. J\"ungel]{Ansgar J\"ungel}
\address{Institute for Analysis and Scientific Computing, Vienna University of
	Technology, Wiedner Hauptstra\ss e 8--10, 1040 Wien, Austria}
\email{juengel@tuwien.ac.at}

\author[I. Perugia]{Ilaria Perugia}
\address{Faculty of Mathematics, University of Vienna, Oskar-Morgenstein-Platz 1,
1090 Wien, Austria}
\email{ilaria.perugia@univie.ac.at}

\date{\today}

\thanks{The authors acknowledge partial support from the Austrian Science Fund (FWF), 
grant F65. The first author has been supported by the FWF Firnberg-Program, grant 
T998. The second and third authors acknowledge support from the FWF, grants
P30000 and W1245. The forth author acknowledges support from the FWF, grant P29197-N32.
The authors thank Cl\'ement Canc\`es for pointing out how to prove
Proposition \ref{prop.p1}.}

\begin{abstract}
An implicit Euler discontinuous Galerkin scheme for the
Fisher-Kol\-mo\-go\-rov-Petrovsky-Piscounov (Fisher-KPP) equation for
population densities with no-flux boundary conditions is suggested and
analyzed. Using an exponential variable transformation, the numerical scheme
automatically preserves the positivity of the discrete solution. 
A discrete entropy inequality
is derived, and the exponential time decay of the discrete density to the
stable steady state in the $L^1$ norm is proved if the initial entropy
is smaller than the measure of the domain. The discrete solution is
proved to converge in the $L^2$ norm to the unique strong solution to the
time-discrete Fisher-KPP equation as the mesh size tends to zero.
Numerical experiments in one space dimension illustrate the theoretical results. 
\end{abstract}

\keywords{Positivity preservation, entropy variables, discontinuous Galerkin method,
discrete entropy decay, convergence of the scheme.}

\subjclass[2000]{65M60, 65M12, 35K20, 35K57.}

\maketitle


\section{Introduction}

The preservation of the structure of nonlinear diffusion equations 
on the discrete level is of paramount importance in applications. 
While there has been an enormous progress on structure-preserving schemes
for ordinary differential equations (see, e.g., \cite{HLW06}),
the development of structure-preserving numerical techniques for nonlinear diffusion
equations is still an
ongoing quest, in particular for higher-order methods. In this paper, we
analyze a toy problem, the Fisher-Kolmogorov-Petrovsky-Piscounov (Fisher-KPP)
equation with no-flux boundary conditions, to devise an implicit Euler
discontinuous Galerkin scheme which preserves the positivity of the solution,
the entropy structure, and the exponential equilibration on the discrete level.
In a future work, we aim to extend the scheme to diffusion systems.

The Fisher-KPP equation \cite{Fis37} is the reaction-diffusion equation
\begin{align}
  & \pa_t u = D\Delta u + u(1-u)\quad\mbox{in }\Omega,\ t>0, \label{1.eq} \\
	& \na u\cdot n = 0\quad\mbox{on }\pa\Omega, \quad u(0) = u_0\quad\mbox{in }
	\Omega, \label{1.bic}
\end{align}
where $D>0$ is the diffusion coefficient, $\Omega\subset\R^d$ a bounded domain,
and $n$ the exterior unit normal vector on the boundary $\pa\Omega$. The
variable $u(x,t)$ models a population density or chemical concentration, 
influenced by diffusion
and logistic growth. The Fisher-KPP equation admits traveling-wave solutions
$u(x,t)=\phi(x-ct)$, which switch between the unstable steady state $u^*=0$ and
the stable steady state $u^*=1$. By the maxiumum principle, the density stays
nonnegative if it does so initially, and it satisfies the entropy inequality
\begin{equation}\label{1.epi}
  \frac{d}{dt}\int_\Omega u(\log u-1)dx + D\int_\Omega\frac{|\na u|^2}{u}dx
	= -\int_\Omega u(u-1)\log u dx \le 0.
\end{equation}

If there are no reaction terms, we have conservation of the total mass, and
the logarithmic Sobolev inequality implies
the exponential decay of the (mathematical) entropy 
$S(t)=\int_\Omega (u(t)(\log u(t)-1)+1)dx$ (see, e.g., \cite[Chapter 2]{Jue16}).
When reaction terms are present, the situation is more delicate, since there
are two steady states, $u^*=0$ and $u^*=1$. 
If the initial entropy $S(0)$ is smaller than
the measure of $\Omega$, then $u(t)$ converges exponentially fast to
$u^*=1$ in the $L^1(\Omega)$ norm. Our objective is to preserve the aforementioned
properties on the discrete level.

It is well known that the preservation of the positivity or nonnegativity of
discrete solutions for \eqref{1.eq} may fail in standard (finite-element) schemes,
in particular when the solution vanishes in some region; see Section
\ref{sec.num} for an example. 
Our key idea to preserve the positivity is to employ the
exponential transformation $u=e^\lambda$. 
Such a transformation or a variant is used, for instance, in the
Il'in scheme \cite{Ili69} and in the existence analysis of 
drift-diffusion equations \cite{Gaj85}. Moreover, it allows for the preservation
of $L^\infty(\Omega)$ bounds in volume-filling cross-diffusion systems
\cite{BDPS10,Jue16}. The implicit Euler scheme for \eqref{1.eq}-\eqref{1.bic}
in the exponential variable then reads as
\begin{align}
  \frac{1}{\triangle t}\big(e^{\lambda^k} - e^{\lambda^{k-1}}\big)
	&= \diver(e^{\lambda^k}\na e^{\lambda^k}) + e^{\lambda^k}(1-e^{\lambda^k})
	\quad\mbox{in }\Omega, \label{sc.eq} \\
	\na\lambda^k\cdot n &= 0 \quad\mbox{on }\pa\Omega,
  \label{sc.bc}
\end{align}
where here and in the following, we set $D=1$ for simplicity and we choose
$0<\triangle t<1$. At first glance, one
may think that this formulation unnecessarily complicates the problem,
but we will show that it enjoys some useful properties.

We propose a discontinuous Galerkin (DG) discretization
for problem \eqref{sc.eq}-\eqref{sc.bc}
with variable $\lambda_h^k$, where $h>0$ is the maximal diameter of the mesh elements.
The nonlinear diffusion term is discretized by an interior penalty DG method. 
By construction, the discrete densities $\exp(\lambda_h^k)$ are positive, and
the scheme also preserves the entropy structure and large-time asymptotics.
Our main results can be sketched as follows:
\begin{itemize}
\item Existence of a solution $\lambda_h^k$ to the implicit Euler DG scheme 
\eqref{dg}, given a function $\lambda_h^{k-1}$ (Proposition \ref{prop.ex}). 
This result is based on the Leray-Schauder fixed-point theorem
and a coercivity estimate.
\item Discrete entropy inequality (Lemma \ref{lem.epi}). The inequality follows
from scheme \eqref{dg} using the test function $\lambda_h^k$ and the
convexity of $u\mapsto u(\log u-1)+1$.
\item Exponential decay of the discrete entropy 
$$
  S_h^k := \int_\Omega\big(e^{\lambda_h^k}(e^{\lambda_h^k}-1)+1\big)dx
	\le S_h^0 e^{-\kappa k\triangle t}
$$
(Proposition \ref{prop.decay}) and of the $L^1$ norm of $e^{\lambda_h^k}-1$
(Theorem \ref{thm.decay}). The result holds if $S_h^0<|\Omega|$. This condition implies
a positive lower bound for the total mass $\int_\Omega e^{\lambda_h^k}dx$, which
is needed to guarantee that the discrete solution converges to the stable
steady state $u^*=1$ and not to the steady state $u^*=0$. 
The case $S_h^0\ge|\Omega|$ is discussed in Remark \ref{rem.S0}.
\item Convergence of the scheme (Theorem \ref{thm.conv}): There exists
a unique strong solution $u^k\in H^2_n(\Omega)$ to the implicit Euler
discretization associated to \eqref{1.eq}-\eqref{1.bic} such that
$$
  e^{\lambda_h^k}\to u^k\quad\mbox{strongly in }L^2(\Omega)\mbox{ as }
	h\to 0.
$$
The result is based on a compactness property, which is a consequence of
the gradient estimate from the entropy inequality and a coercivity
estimate. This yields a very weak semi-discrete solution,
which turns out to be a strong solution thanks to a duality argument.
\end{itemize}

Let us put our results into context and review the state of the art of 
stucture preservation in DG methods.
The DG scheme was introduced in the early 1970s for first-order hyperbolic problems in 
\cite{LeRa74,ReHi73}. The development of discontinuous finite-element schemes for 
second-order elliptic problems can be traced back to \cite{Nit71} with similar
approaches in, for instance, \cite{Arn82,Bak77,PeWh78,Whe78}; see also \cite{ABCM02}.

The design of structure-preserving DG methods is a rather recent topic.
Positivity-preserving DG schemes for parabolic equations were developed
in, e.g., \cite{CNP12,GuYa15,LiWa16,SCS18,ZZJLAY16}. 
The positivity preservation is ensured
by using a special slope limiter (as in \cite{CNP12,GuYa15}), together with a  
strong stability preserving Runge-Kutta time discretization 
(as in \cite{SCS18,ZZJLAY16}), while in \cite{LiWa16}, the positivity of the
discrete solution is enforced through a reconstruction algorithm, based on 
positive cell averages. As far as we know, the use of an exponential transformation
to ensure the positivity of the discrete solutions within a DG scheme is new.
Positivity-preserving schemes for the Fisher-KPP equation were already studied in the
literature, but only for finite-difference approximations \cite{HSS17,MJP12}, 
without a convergence analysis, and for continuous finite-element 
discretizations \cite{YaJi17}.

Other important properties are entropy stability (the entropy is bounded for all 
times) and entropy monotonicity (the entropy is nonincreasing). Entropy-stable
DG schemes for the compressible Euler and Navier-Stokes equation were studied 
in \cite{GWK16,PCFN16}, while a discrete version of the entropy inequality
(and hence entropy monotonicity) was proved in \cite{SCS18} for Fokker-Planck-type
equations and aggregation models. We are not aware of results in the
literature regarding the preservation of the entropy structure of the Fisher-KPP
equation on the discrete level.

The paper is organized as follows. We state our notation and some auxiliary
results related to the DG method in Section \ref{sec.aux}. The DG scheme is
introduced and studied in Section \ref{sec.anal}: 
The existence of a solution to the DG scheme,
the discrete entropy inequality, and the exponential decay of the
entropy are proved. The convergence of the numerical scheme is
proved in Section~\ref{sec.conv}. Finally, Section \ref{sec.num} is devoted to some
numerical experiments in one space dimension.


\section{Notation and auxiliary results}\label{sec.aux}

We start with some notation. Let $\T_h=\{K_i:i=1,\ldots,N_h\}$ be a family of
simplicial partitions of the bounded domain $\Omega\subset\R^d$
for $d=1,2,3$. The mesh parameter $h$ is defined by $h=\max_{K\in\T_h}h_K$,
where $h_K=\operatorname{diam}(K)$. The elements may be tetrahedra in three
space dimensions, triangles in two dimensions, and intervals in one dimension.
In two and three dimensions, we suppose that $\T_h$ is shape regular
(see, e.g., \cite[Section 2.1]{PPOR00}) and, for simplicity, without hanging nodes.
Our analysis actually extends also to $k$-irregular meshes
\blue{\cite{HSS02}.}
We denote by $\E_h$ the set of interior faces or edges of the elements in $\T_h$.

On the partition $\T_h$, we define the broken Sobolev space 
$$
  H^s(\Omega,\T_h) = \big\{\xi\in L^2(\Omega):\xi|_K\in H^s(K)
	\mbox{ for all }K\in\T_h\big\}, \quad s>0.
$$
The traces of functions in $H^1(\Omega,\T_h)$ belong to the space
$T(\Gamma_h)=\prod_{K\in\T_h}L^2(\pa K)$, where $\Gamma_h$ is the union of all 
boundaries $\pa K$ for all $K\in\T_h$. The functions in $T(\Gamma_h)$ 
are single-valued on $\pa\Omega$ and double-valued on $\Gamma_h\setminus\pa\Omega$.

Let $q$ be a piecewise smooth function and $q$ be a piecewise smooth vector field on
$\T_h$. We write $K_-$ and $K_+$ for the two elements sharing 
the face $f$, i.e.\ $f=\pa K_-\cap\pa K_+$, and $n_\pm$ for the unit normal
vector pointing to the exterior of $K_\pm$. Furthermore, we set
$q_\pm = q|_{K_\pm}$ and $\phi_\pm = \phi|_{K_\pm}$. Then we define
\begin{align*}
  \mbox{averages:}\quad & \aver{q} = \frac12(q_- + q_+), \quad 
	\aver{\phi} = \frac12(\phi_- + \phi_+), \\
	\mbox{jumps:}\quad & \jump{q} = q_-n_- + q_+n_+, \quad 
	\jump{\phi} = \phi_-\cdot n_- + \phi_+\cdot n_+.
\end{align*}
Note that the jump of a scalar function is a vector which is normal
to $f$, and the jump of a vector-valued function is a scalar.

The mesh size function ${\tt h}\in L^\infty(\Gamma_h)$ is defined by
$$
  {\tt h}(x) = \min\{h_{K_-},h_{K_+}\} \quad\mbox{for }x\in\pa K_-\cap\pa K_+.
$$
Furthermore, we introduce the finite-element space of degree $p\in\N$ associated to
the partition $\T_h$:
$$
  V_h = \big\{v\in L^2(\Omega): v|_K\in P_p(K)\mbox{ for all }K\in\T_h\big\},
$$
where $P_p(K)$ is the set of polynomials on $K$ with degree at most $p$,
and the space of test functions
$$
  H_n^2(\Omega) = \{\phi\in H^2(\Omega):\na\phi\cdot n=0\mbox{ on }\pa\Omega\}.
$$

Next, we recall some auxiliary results.

\begin{lemma}[Inverse trace inequality; Lemma 2.1 in \cite{RWG01}]\label{lem.inv}
Let $K\in\R^d$ ($d=2,3$) be an element with diameter $h_K$, let $f$ be an edge or
face of $K$, and let $n_f$ be a unit normal vector normal to $f$. Then for
all polynomials $\xi\in P_p(K)$ of degree $p$, there exists a constant $C_{\rm inv}>0$,
independent of $h_K$ and $p$, such that
\begin{align}
  \|\xi\|_{L^2(\pa K)} &\le C_{\rm inv}\frac{p}{\sqrt{h_K}}\|\xi\|_{L^2(K)}, 
	\label{inv.ineq} \\
	\|\na\xi\cdot n_f\|_{L^2(\pa K)} 
	&\le C_{\rm inv}\frac{p}{\sqrt{h_K}}\|\na\xi\|_{L^2(K)}. \nonumber
\end{align}
\end{lemma}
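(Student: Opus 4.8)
The plan is to reduce the bound on a general element $K$ to a dimension-free polynomial inequality, and to control the $p$-dependence of the resulting constant by the sharp (quadratic in $p$) Markov inverse inequality for polynomials. Rather than scaling directly to a reference element, I would use a divergence-theorem argument, which keeps the powers of $h_K$ explicit. Fix a face $f$ of $K$ and let $a$ be the vertex opposite to $f$; set $v(x)=x-a$, so that $\diver v=d$, while $v\cdot n_{f'}=0$ on every other face $f'$ (since $f'$ contains $a$, the vector $x-a$ is tangential to $f'$), and $v\cdot n_f$ is constant on $f$, equal to the height of $K$ over $f$, which by shape regularity satisfies $c\,h_K\le v\cdot n_f\le h_K$.

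For $\xi\in P_p(K)$, the divergence theorem then gives
$$
  c\,h_K\int_f\xi^2\,ds\le\int_{\pa K}\xi^2\,(v\cdot n)\,ds
  =\int_K\diver(\xi^2 v)\,dx
  =\int_K\big(2\xi\,\na\xi\cdot v+d\,\xi^2\big)\,dx.
$$
Estimating the right-hand side by $2\|v\|_{L^\infty(K)}\|\xi\|_{L^2(K)}\|\na\xi\|_{L^2(K)}+d\|\xi\|_{L^2(K)}^2$, using $\|v\|_{L^\infty(K)}\le h_K$ together with the Markov inequality $\|\na\xi\|_{L^2(K)}\le C_M\,p^2 h_K^{-1}\|\xi\|_{L^2(K)}$ on the simplex $K$, all factors of $h_K$ cancel and one obtains $\int_f\xi^2\,ds\le C\,p^2 h_K^{-1}\|\xi\|_{L^2(K)}^2$. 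Summing over the $d+1$ faces of $K$ yields \eqref{inv.ineq}. For the second inequality, each $\pa_i\xi$ lies in $P_{p-1}(K)\subset P_p(K)$, so \eqref{inv.ineq} applied componentwise and summed in $i$, together with $|\na\xi\cdot n_f|\le|\na\xi|$, gives the claim.

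The step I expect to be delicate is the one hidden in the Markov inequality: it is exactly the quadratic growth $\|\na\xi\|_{L^2(K)}\le C\,p^2 h_K^{-1}\|\xi\|_{L^2(K)}$ that forces the trace constant to be \emph{linear} in $p$. In one space dimension this is elementary — expanding $\xi$ in Legendre polynomials on $K$, using $L_k(\pm1)=1$ and $\|L_k\|_{L^2}^2=2/(2k+1)$ gives $|\xi(\text{endpoint})|^2\le\tfrac12(p+1)(2p+1)\,|K|^{-1}\|\xi\|_{L^2(K)}^2$ directly, bypassing the divergence-theorem step altogether. In two and three dimensions one instead invokes the classical multivariate Markov inequality on a simplex, or equivalently performs a scaling to a fixed reference simplex and tracks the $p$-dependence through bounds for Jacobi-type orthogonal polynomials there; shape regularity enters precisely to make this affine map well conditioned and to keep the height above comparable to $h_K$.
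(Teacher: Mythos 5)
The paper does not actually prove this lemma: it is quoted verbatim from \cite{RWG01}, where (as in most references on $hp$-explicit trace estimates) the constant is obtained by mapping to a reference simplex and using explicit orthogonal-polynomial expansions on it. Your divergence-theorem argument is a genuinely different and essentially correct route. The identity with $v(x)=x-a$ does isolate a single face (the faces containing $a$ drop out because $v$ is tangential there), shape regularity gives $v\cdot n_f\ge c\,h_K$ on $f$ and $\|v\|_{L^\infty(K)}\le h_K$, and the powers of $h_K$ cancel exactly as you claim, so the entire $p$-dependence is indeed concentrated in the $L^2$ Markov inequality $\|\na\xi\|_{L^2(K)}\le C p^2h_K^{-1}\|\xi\|_{L^2(K)}$ on a shape-regular simplex. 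That inequality is classical but is itself of comparable depth to the statement being proved, so your argument is complete only modulo that citation, which you flag honestly; your reduction of the second estimate to the first via $|\na\xi\cdot n_f|\le|\na\xi|$ and componentwise application to $\pa_i\xi\in P_{p-1}(K)$ is fine. Two minor remarks: the bound \eqref{inv.ineq} degenerates for $p=0$ (the right-hand side vanishes while the left does not), so one must read $p\ge 1$, consistent with the paper's use of $p\in\N$; and what the reference-element route buys over yours is a sharp explicit constant of the form $(p+1)(p+d)/d\cdot|\pa K|/|K|$, whereas your argument is shorter, avoids any boundary expansion, and extends to more general star-shaped elements once a Markov inequality is available.
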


\begin{lemma}[Multiplicative trace inequality; Lemma A.2 in \cite{PPOR00}]\label{lem.mti}
Let $K$ be a shape-regular element. Then there exists a constant $C>0$
such that for all $\xi\in H^1(K)$,
$$
  \|\xi\|_{L^2(\pa K)}^2 \le C\|\xi\|_{L^2(K)}\bigg(\frac{1}{h_K}\|\xi\|_{L^2(K)}
	+ \|\na\xi\|_{L^2(K)}\bigg).
$$
\end{lemma}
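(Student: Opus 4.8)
The plan is to derive the estimate from a single application of the divergence theorem, supplemented by one elementary geometric fact about shape-regular simplices. Since $C^\infty(\overline{K})$ is dense in $H^1(K)$ and both sides of the inequality depend continuously on $\xi$ in the $H^1(K)$ norm (the boundary term by any fixed trace inequality on the fixed domain $K$, which is non-circular since we do not need its scaling here), I first reduce to the case $\xi\in C^\infty(\overline{K})$.

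Fix $K\in\T_h$, let $x_0$ be its barycenter, and set $\Phi(x)=x-x_0$, so that $\diver\Phi=d$ and $|\Phi(x)|\le\operatorname{diam}(K)=h_K$ for all $x\in K$. Applying the divergence theorem to the vector field $\xi^2\Phi$ gives the identity
$$
  \int_{\pa K}\xi^2\,(\Phi\cdot n)\,ds
  = \int_K\diver(\xi^2\Phi)\,dx
  = d\int_K\xi^2\,dx + 2\int_K\xi\,(\na\xi\cdot\Phi)\,dx ,
$$
whose right-hand side is bounded by $d\|\xi\|_{L^2(K)}^2+2h_K\|\xi\|_{L^2(K)}\|\na\xi\|_{L^2(K)}$ via the Cauchy--Schwarz inequality. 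For the left-hand side the key point is the lower bound $\Phi\cdot n\ge c_1 h_K$ on $\pa K$, with $c_1>0$ depending only on $d$ and the shape-regularity constant: on a face $f$ of the simplex $K$ with (constant) outward unit normal $n_f$, the quantity $(x-x_0)\cdot n_f$ is independent of $x\in f$ and equals the distance from $x_0$ to the hyperplane spanned by $f$, which is $\tfrac{1}{d+1}$ of the corresponding altitude of $K$; shape regularity forces $|K|\ge c\,h_K^d$ while $f$ has $(d-1)$-dimensional measure at most $C\,h_K^{d-1}$, so this altitude — hence this distance — is at least a fixed multiple of $h_K$.

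Combining the two bounds with the identity and dividing by $c_1 h_K$ yields
$$
  \|\xi\|_{L^2(\pa K)}^2
  \le \frac{d}{c_1 h_K}\|\xi\|_{L^2(K)}^2 + \frac{2}{c_1}\|\xi\|_{L^2(K)}\|\na\xi\|_{L^2(K)}
  \le C\,\|\xi\|_{L^2(K)}\Big(\tfrac{1}{h_K}\|\xi\|_{L^2(K)} + \|\na\xi\|_{L^2(K)}\Big)
$$
with $C=\max(d,2)/c_1$, i.e.\ the claimed inequality with a constant depending only on $d$ and the shape regularity of $\T_h$. An equivalent route is the classical scaling argument: map $K$ affinely onto a fixed reference simplex $\widehat K$, prove the estimate on $\widehat K$ (again by the divergence theorem, now on a fixed domain), and transform back, tracking the powers of $h_K$ in the Jacobian and invoking $\|B_K^{-1}\|\le C/h_K$ — which is once more shape regularity. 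In either version the only non-routine ingredient is this geometric lower bound (equivalently, the bound on $\|B_K^{-1}\|$); everything else is the divergence theorem, Cauchy--Schwarz, and density, and I do not expect any obstacle there.
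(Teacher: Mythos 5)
The paper does not prove this lemma; it is quoted verbatim from the cited reference (Lemma A.2 in \cite{PPOR00}), where the standard argument is precisely the one you give. Your proof is correct: the divergence-theorem identity for $\xi^2(x-x_0)$, the Cauchy--Schwarz bound on the volume terms, and the shape-regularity lower bound $(x-x_0)\cdot n\ge c_1 h_K$ on each face (barycenter-to-face distance being a fixed fraction of the altitude, which scales like $h_K$) are all accurate, and the density reduction is handled non-circularly. Nothing further is needed.
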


\begin{lemma}[Discrete Poincar\'e-Wirtinger inequality; Theorem 4.1 in
\cite{BuOr09}]\label{lem.dpw}\ \
There exists a constant $C_{\rm PW}>0$ such that for all $\xi\in H^1(\Omega,\T_h)$,
$$
  \bigg\|\xi - \frac{1}{|\Omega|}\int_\Omega\xi dx\bigg\|_{L^2(\Omega)}
	\le C_{\rm PW}\bigg(\sum_{K\in \T_h}\|\na\xi\|_{L^2(K)}^2
	+ \sum_{f\in\E_h}\int_f\frac{p^2}{{\tt h}}|\jump{\xi}|^2 dx\bigg)^{1/2}.
$$
\end{lemma}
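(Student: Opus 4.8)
The plan is to compare $\xi$ with an $H^1(\Omega)$-conforming approximation and to apply the classical Poincar\'e--Wirtinger inequality to the latter. Since the elementwise gradient and the jumps $\jump{\cdot}$ are unchanged when a constant is subtracted from $\xi$, we may assume $\int_\Omega\xi\,dx=0$ and only have to bound $\|\xi\|_{L^2(\Omega)}$ by the right-hand side. Let $\mathcal I_h\xi\in H^1(\Omega)$ be the continuous, piecewise linear function obtained by assigning to each vertex of $\T_h$ a weighted average of the values there of the piecewise-constant $L^2$-projection $P_0\xi$ of $\xi$ (for $\xi\in V_h$ one may instead use the usual Oswald operator with range in $V_h\cap H^1(\Omega)$). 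Using $\|\xi-P_0\xi\|_{L^2(K)}\le Ch_K\|\na\xi\|_{L^2(K)}$ together with standard scaling, inverse and trace estimates, one gets
\begin{align*}
  \|\xi-\mathcal I_h\xi\|_{L^2(\Omega)}^2
	&\le C\sum_{f\in\E_h}h_f\int_f|\jump{\xi}|^2\,ds
	+ C\sum_{K\in\T_h}\|\na\xi\|_{L^2(K)}^2,\\
	\sum_{K\in\T_h}\|\na(\xi-\mathcal I_h\xi)\|_{L^2(K)}^2
	&\le C\sum_{f\in\E_h}\frac{1}{h_f}\int_f|\jump{\xi}|^2\,ds
	+ C\sum_{K\in\T_h}\|\na\xi\|_{L^2(K)}^2,
\end{align*}
where $h_f=\operatorname{diam}(f)$ and $C$ depends on the shape-regularity of $\T_h$ and on $\operatorname{diam}(\Omega)$.

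Next I would write, with $m:=|\Omega|^{-1}\int_\Omega\mathcal I_h\xi\,dx$,
$$
  \|\xi\|_{L^2(\Omega)}\le\|\xi-\mathcal I_h\xi\|_{L^2(\Omega)}
	+\|\mathcal I_h\xi-m\|_{L^2(\Omega)}+|\Omega|^{1/2}|m| .
$$
For the first term I use the first displayed estimate together with $h_f\le\operatorname{diam}(\Omega)$, the comparability $c\,h_f\le{\tt h}\le h_f$ on $f$ (which follows from shape-regularity and conformity of $\T_h$), and $p\ge1$; this converts $\sum_f h_f\int_f|\jump{\xi}|^2\,ds$ into $C_\Omega\sum_f\int_f(p^2/{\tt h})|\jump{\xi}|^2\,ds$. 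The third term is harmless since $|\Omega|^{1/2}|m|=|\Omega|^{-1/2}\big|\int_\Omega(\mathcal I_h\xi-\xi)\,dx\big|\le|\Omega|^{-1/2}\|\mathcal I_h\xi-\xi\|_{L^2(\Omega)}$, which has just been bounded. For the middle term, the classical Poincar\'e--Wirtinger inequality on the bounded connected (Lipschitz) domain $\Omega$ gives $\|\mathcal I_h\xi-m\|_{L^2(\Omega)}\le C_\Omega\|\na\mathcal I_h\xi\|_{L^2(\Omega)}$, and $\|\na\mathcal I_h\xi\|_{L^2(\Omega)}\le\big(\sum_K\|\na(\xi-\mathcal I_h\xi)\|_{L^2(K)}^2\big)^{1/2}+\big(\sum_K\|\na\xi\|_{L^2(K)}^2\big)^{1/2}$; the second displayed estimate, treated as above, turns the jump contribution into $\sum_f\int_f(p^2/{\tt h})|\jump{\xi}|^2\,ds$. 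Collecting the three pieces gives the assertion with $C_{\rm PW}$ depending only on $\Omega$, the shape-regularity constant of $\{\T_h\}$ and the dimension $d$.

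The main obstacle is the construction of the averaging operator $\mathcal I_h$ and the verification of its approximation properties with the stated mesh-size scaling; for piecewise polynomials this is by now classical, whereas for general broken $H^1$ functions the preliminary projection step has to be arranged so that its contribution is absorbed by the term $\sum_K\|\na\xi\|_{L^2(K)}^2$. I remark that the weight $p^2/{\tt h}$ in the statement is generous: since $p\ge1$, it suffices to prove the inequality with $1/{\tt h}$ in place of $p^2/{\tt h}$, i.e.\ the purely mesh-dependent broken Poincar\'e--Wirtinger inequality. An alternative, fully elementary proof of that version avoids $\mathcal I_h$ altogether and chains the elementwise means $|K|^{-1}\int_K\xi\,dx$ along paths in the dual graph of $\T_h$ (in the spirit of Brenner), each step being controlled by a local Poincar\'e inequality and the multiplicative trace inequality of Lemma~\ref{lem.mti}, the only delicate point being to choose the paths so that the resulting constant does not deteriorate with the number of mesh elements. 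The explicit $p$-scaling of the $hp$-version in \cite{BuOr09} is then recovered from $hp$-explicit estimates for $\mathcal I_h$.
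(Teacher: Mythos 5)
The paper does not prove this lemma at all: it is imported verbatim as Theorem~4.1 of \cite{BuOr09}, so there is no internal proof to compare against. Your argument is a correct outline of the standard way such broken Poincar\'e--Wirtinger inequalities are established (Brenner's conforming-reconstruction technique): reduce to mean zero, build an $H^1(\Omega)$-conforming averaging/Oswald interpolant, control $\xi-\mathcal I_h\xi$ in $L^2$ and in the broken $H^1$ seminorm by the jumps and the elementwise gradients, and apply the classical Poincar\'e--Wirtinger inequality to $\mathcal I_h\xi$. The decomposition into the three terms, the treatment of the mean $m$ via Cauchy--Schwarz, and the observation that the $p^2/{\tt h}$ weight with $p\ge1$ makes the stated inequality weaker than the pure $1/{\tt h}$ version (so no genuine $hp$-analysis is needed) are all sound. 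The real content is concentrated in the two displayed approximation estimates for $\mathcal I_h$, which you correctly flag as the main obstacle; for $\xi\in V_h$ they are classical, and for general $\xi\in H^1(\Omega,\T_h)$ the preliminary elementwise projection contributes $O(h_K)\|\na\xi\|_{L^2(K)}$ errors in $L^2(K)$ and on $\pa K$ (via the multiplicative trace inequality of Lemma~\ref{lem.mti}), which are indeed absorbed into $\sum_K\|\na\xi\|_{L^2(K)}^2$ as you claim. Two cosmetic points: the comparability you invoke actually reads $h_f\le{\tt h}\le C h_f$ (the face diameter is trivially bounded by both element diameters, and the reverse bound is what requires shape regularity) --- the direction you need, ${\tt h}\le Ch_f$, does hold; and the $L^2$ estimate for $\xi-\mathcal I_h\xi$ naturally carries a factor $h_K^2$ on the gradient term, which you absorb into a constant depending on $\operatorname{diam}(\Omega)$, as you note. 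With the averaging-operator estimates taken from the literature (or proved by the scaling arguments you indicate), the proof is complete and yields a constant depending only on $\Omega$, the shape-regularity constant, and $d$, which is exactly the uniformity in $h$ that the paper needs.
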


We also need a compactness result for functions $\xi\in H^1(\Omega,\T_h)$.
For this, we define the DG norm
\begin{equation}\label{dgnorm}
  \|\xi\|_{\rm DG} = \bigg(\|\xi\|_{L^2(\Omega)}^2 
	+ \sum_{K\in\T_h}\|\na\xi\|_{L^2(K)}^2
	+ \sum_{f\in\E_h}\int_f\frac{p^2}{{\tt h}}|\jump{\xi}|^2 dx\bigg)^{1/2}.
\end{equation}

\begin{lemma}[DG compact embedding; Lemma 8 in \cite{BuOr09}]\label{lem.comp}
Let $(\xi_h)\subset H^1(\Omega,\T_h)$ be a sequence such that $\|\xi_h\|_{\rm DG}\le C$
for all $h\in(0,1)$ and some $C>0$. Then there exists a subsequence $(h_i)$ with
$h_i\to 0$ as $i\to\infty$ and a function $\xi\in H^1(\Omega)$ such that
$$
  \xi_{h_i}\to\xi \quad\mbox{strongly in }L^q(\Omega)\mbox{ as }h_i\to 0,
$$
where $1\le q<q^*$ and $q^*=4$ for $d=3$, $q^*=\infty$ for $d=1,2$.
\end{lemma}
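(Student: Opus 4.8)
The plan is to reduce the broken setting to the conforming one by a reconstruction argument, then invoke the classical Rellich--Kondrachov theorem, and finally recover the asserted range of exponents by interpolation. The first and main step is to construct a linear \emph{averaging} (Oswald-type) operator $\mathcal{A}_h\colon H^1(\Omega,\T_h)\to H^1(\Omega)$ satisfying, for every $\xi\in H^1(\Omega,\T_h)$ and with a constant $C$ depending only on the shape-regularity of $\T_h$ and on the polynomial degree $p$,
\begin{equation}\label{plan.rec}
  \|\mathcal{A}_h\xi\|_{H^1(\Omega)}\le C\,\|\xi\|_{\rm DG},
  \qquad
  \|\xi-\mathcal{A}_h\xi\|_{L^2(\Omega)}\le C\,h\,\|\xi\|_{\rm DG}.
\end{equation}
For $\xi\in V_h$ this is the usual nodal-averaging interpolant, whose $H^1$-stability and $L^2$-approximation estimate are obtained from elementwise inverse inequalities (Lemma~\ref{lem.inv}) once one observes that, since $0<{\tt h}\le h$ and $p\ge1$, the penalty term --- which is itself bounded by $\|\xi\|_{\rm DG}^2$ --- controls both $\sum_{f\in\E_h}{\tt h}^{-1}\|\jump{\xi}\|_{L^2(f)}^2$ (which enters the gradient part of the stability bound) and $h^{-2}\sum_{f\in\E_h}{\tt h}\,\|\jump{\xi}\|_{L^2(f)}^2$ (which enters the $L^2$-approximation bound). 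For a general $\xi\in H^1(\Omega,\T_h)$ one precomposes with an elementwise, $H^1$-stable quasi-interpolation onto $V_h$ (for instance the elementwise $L^2$-projection), estimating the nonconformity it creates on the faces by the multiplicative trace inequality of Lemma~\ref{lem.mti}. The structural fact behind \eqref{plan.rec} is that the jump-penalty part of $\|\cdot\|_{\rm DG}$ precisely measures, and therefore dominates, the failure of $\xi$ to be $H^1$-conforming.

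With \eqref{plan.rec} available, the sequence $(\mathcal{A}_h\xi_h)$ is bounded in $H^1(\Omega)$, so Rellich--Kondrachov furnishes a subsequence $h_i\to0$ and a limit $\xi\in H^1(\Omega)$ with $\mathcal{A}_{h_i}\xi_{h_i}\to\xi$ strongly in $L^q(\Omega)$ for all $q<2d/(d-2)$ (every finite $q$ if $d\le2$). Since $\|\xi_{h_i}-\mathcal{A}_{h_i}\xi_{h_i}\|_{L^2(\Omega)}\le C h_i\|\xi_{h_i}\|_{\rm DG}\le C' h_i\to0$ by the hypothesis $\|\xi_h\|_{\rm DG}\le C$, this already gives $\xi_{h_i}\to\xi$ strongly in $L^2(\Omega)$, which proves the statement for $q=2$ in every dimension. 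To reach $L^q(\Omega)$ for $2<q<q^*$ I would interpolate,
$$
  \|\xi_{h_i}-\xi\|_{L^q(\Omega)}\le\|\xi_{h_i}-\xi\|_{L^2(\Omega)}^{\theta}\,\|\xi_{h_i}-\xi\|_{L^{r}(\Omega)}^{1-\theta},
  \qquad \tfrac1q=\tfrac\theta2+\tfrac{1-\theta}{r},
$$
with $q<r<q^*$, using a uniform bound of the whole sequence $(\xi_{h_i})$ in $L^{r}(\Omega)$; this higher-integrability bound is supplied by a broken Sobolev-type embedding of the DG norm ($q^*=\infty$ when $d\le2$, $q^*=4$ when $d=3$). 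The first factor on the right then vanishes in the limit while the second stays bounded, which completes the proof.

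The heart of the argument is clearly the first step: obtaining $\mathcal{A}_h$ and the estimates \eqref{plan.rec} directly for the \emph{broken} space $H^1(\Omega,\T_h)$, not merely for piecewise polynomials, which is what forces the preliminary quasi-interpolation onto $V_h$ and a careful accounting of the $p$-weighted face terms through Lemmas~\ref{lem.inv} and~\ref{lem.mti}. The interpolation bookkeeping of the last step, which pins down the precise value of $q^*$, is secondary and routine. As an alternative to the reconstruction route one could instead establish $L^2$-compactness directly from the Kolmogorov--Riesz--Fr\'echet criterion, bounding the $L^2$-norm of a finite difference $\tau_\zeta\xi_h-\xi_h$ on interior subdomains by an elementwise gradient term plus the contribution of the $O(|\zeta|/h)$ faces crossed by a translation of length $|\zeta|$, the latter again controlled by the penalty term; the reconstruction argument is, however, more convenient because it produces the limit in $H^1(\Omega)$ and dovetails directly with the interpolation needed for $q>2$.
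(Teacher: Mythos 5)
The paper does not prove this lemma at all --- it is quoted verbatim from \cite{BuOr09} --- so there is no in-paper argument to compare against; judged on its own, your proof is correct in outline and is essentially the standard (and, as far as I can tell, the cited reference's) argument: build an $H^1(\Omega)$-conforming reconstruction by nodal averaging of an elementwise quasi-interpolant, control its $H^1$ norm by $\|\xi_h\|_{\rm DG}$ and its $L^2$ distance to $\xi_h$ by $Ch\|\xi_h\|_{\rm DG}$ using the jump penalty, and then invoke Rellich--Kondrachov. This route has the additional merit of producing the limit directly in $H^1(\Omega)$, which a purely $BV$-based compactness argument would require an extra step to recover. Two remarks on the parts you treat lightly. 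First, the interpolation step for $2<q<q^*$ imports the broken Sobolev embedding $\|\xi\|_{L^r(\Omega)}\le C\|\xi\|_{\rm DG}$ as a black box; this is a result of comparable depth to your reconstruction estimates, but it is not circular, since it follows from the very decomposition you set up (the conforming part is bounded in $L^{2d/(d-2)}$ by the usual Sobolev embedding, the elementwise projection error by a scaled local Sobolev inequality, and the averaging error by an inverse estimate on the polynomial part). Second, carried out this way your argument actually delivers $q^*=2d/(d-2)=6$ for $d=3$, i.e., a conclusion stronger than the $q^*=4$ stated in the lemma --- which is consistent and harmless, but worth noting so that the reader does not look for a step that forces the exponent $4$.
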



\section{Analysis of the DG scheme: existence and 
structure preservation}\label{sec.anal}

The DG discretization of the weak formulation of \eqref{sc.eq}-\eqref{sc.bc}
reads as follows. Let $\eps\ge 0$ and $\lambda_h^0\in V_h$. Given
$\lambda_h^{k-1}\in V_h$, we wish to find $\lambda_h^k\in V_h$ such that
for all $\phi_h\in V_h$,
\begin{equation}\label{dg}
  \int_\Omega \big(e^{\lambda_h^k}-e^{\lambda_h^{k-1}}\big)\phi_h dx
  + \triangle t B(\lambda_h^k;\lambda_h^k,\phi_h)
  + \eps\int_\Omega\lambda_h^k\phi_h dx
  = \triangle t\int_\Omega e^{\lambda_h^k}(1-e^{\lambda_h^k})\phi_h dx.
\end{equation}
The form $B:H^1(\Omega,\T_h)^3\to\R$ represents the interior penalty DG discretization
of the nonlinear diffusion term. It is linear in the second and third argument
and is defined by
\begin{align}
  B(u;v,w) &= \sum_{K\in\T_h} e^u\na v\cdot\na w dx
	- \sum_{f\in\E_h}\int_f\big(\aver{e^u\na v}\cdot\jump{w} 
	+ \aver{e^u\na w}\cdot\jump{v}\big)ds \nonumber \\
	&\phantom{xx}{}+ \sum_{f\in\E_h}\int_f\frac{p^2}{{\tt h}}\alpha(u)\jump{v}\cdot
	\jump{w}ds, \label{def.B}
\end{align}
where $\alpha(u)$ is a stabilization function, given by
\begin{equation}\label{def.alpha}
  \alpha(u) = \frac32 C_{\rm inv}^2\big(\max\{(e^u)_-,(e^u)_+\}\big)^2
	\max\big\{\exp(\|u\|_{L^\infty(K_-)}),\exp(\|u\|_{L^\infty(K_+)})\big\}.
\end{equation}
We recall that the constant $C_{\rm inv}$ is defined in Lemma \ref{lem.inv}.
The third term on the left-hand side of \eqref{dg} is a regularization term
(only) needed for the existence analysis to derive a uniform (but $\eps$-depending)
bound for the fixed-point argument. For linear elements $p=1$, we may allow for
$\eps=0$; see Appendix \ref{app}.


\subsection{Existence of a discrete solution}

We show that problem \eqref{dg} possesses a solution. First, we prove a
coercivity property for the form $B$.

\begin{lemma}[Coercivity of $B$]\label{lem.coerc}
The form $B$, defined in \eqref{def.B}, satisfies for all $v\in H^1(\Omega,\T_h)$,
$$
  B(v;v,v) \ge 2\sum_{K\in\T_h}\int_K|\na e^{v/2}|^2 dx
	+ 2C_{\rm inv}^2\sum_{f\in\E_h}\int_f\frac{p^2}{{\tt h}}|\jump{e^{v/2}}|^2 ds.
$$
\end{lemma}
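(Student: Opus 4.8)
The strategy is to evaluate $B(v;v,v)$ directly from the definition \eqref{def.B} and to bound each of the three groups of terms. Setting $u=v=w=v$, the volume term becomes $\sum_K \int_K e^v |\na v|^2\,dx$, which equals $4\sum_K \int_K |\na e^{v/2}|^2\,dx$ since $\na e^{v/2}=\tfrac12 e^{v/2}\na v$. So the volume term already contributes twice what we need, leaving a surplus of $2\sum_K\int_K |\na e^{v/2}|^2\,dx$ to absorb the (sign-indefinite) consistency terms on the faces. The penalty term with $w=v=v$ is $\sum_{f}\int_f \frac{p^2}{\tt h}\alpha(v)|\jump{v}|^2\,ds$, and here I would use that $|\jump{v}|$ and $|\jump{e^{v/2}}|$ are comparable on each face: by the mean value theorem, on a face shared by $K_-,K_+$ one has $|e^{v_-/2}-e^{v_+/2}|=\tfrac12 e^{\xi/2}|v_--v_+|$ for some intermediate value, hence $|\jump{e^{v/2}}|\le \tfrac12 \max\{\exp(\tfrac12\|v\|_{L^\infty(K_-)}),\exp(\tfrac12\|v\|_{L^\infty(K_+)})\}\,|\jump{v}|$. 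This is the step that explains the exponential factor built into $\alpha$ in \eqref{def.alpha}.

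The crux is controlling the consistency term $-2\sum_f\int_f \aver{e^v\na v}\cdot\jump{v}\,ds$ (the two mixed terms coincide when $v=w$). I would rewrite $e^v\na v = 2 e^{v/2}\na e^{v/2}$, so $\aver{e^v\na v}=\aver{2e^{v/2}\na e^{v/2}}$, and then estimate $\|e^{v/2}\na e^{v/2}\|_{L^2(f)}$ on each element side using the inverse trace inequality \eqref{inv.ineq}: $\|\na e^{v/2}\cdot n_f\|_{L^2(\pa K)}\le C_{\rm inv}\frac{p}{\sqrt{h_K}}\|\na e^{v/2}\|_{L^2(K)}$, together with the pointwise bound $|e^{v/2}|\le \max\{(e^v)_\pm\}^{1/2}\,(\text{something})$ — more precisely pulling out $\max\{(e^v)_-,(e^v)_+\}$ and the $L^\infty$ exponential to match $\alpha$. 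After Cauchy–Schwarz on the face integral and Young's inequality $2ab\le \delta a^2 + \delta^{-1}b^2$ with a suitably chosen $\delta$, the $a^2$-part gets absorbed into the surplus $2\sum_K\int_K|\na e^{v/2}|^2\,dx$ from the volume term, and the $b^2$-part (the jump part) gets absorbed into the penalty term precisely because $\alpha$ carries the factor $\tfrac32 C_{\rm inv}^2(\max\{(e^v)_\pm\})^2\max\{\exp\|v\|_{L^\infty}\}$. Choosing $\delta=1/2$ (or similar) should leave exactly the constants $2$ and $2C_{\rm inv}^2$ claimed, using that each interior face touches two elements (the factor $\tfrac32$ versus the coefficient $2$ in the conclusion leaves the needed slack).

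The main obstacle is bookkeeping the element-wise $L^\infty$ exponential weights so that the Young's inequality constant, the inverse-trace constant $C_{\rm inv}$, the factor $2$ from the two coinciding mixed terms, and the factor $\tfrac32$ in $\alpha$ all combine to yield coefficients no smaller than $2$ and $2C_{\rm inv}^2$; in particular one must be careful that $\aver{e^{v/2}\na e^{v/2}}$ on a face involves both sides, so summing the absorbed volume contributions over faces must be dominated by the sum over elements (each element appears in at most a bounded number of faces, but with simplices in $\R^d$ one must track the $d+1$ faces per element, or equivalently split each face contribution symmetrically between its two elements). I would therefore present the estimate face by face, assign to each face the weight coming from the neighbouring elements via $\alpha$, and only at the end sum over $\E_h$ and $\T_h$, checking that no element's $\|\na e^{v/2}\|_{L^2(K)}^2$ is used with total weight exceeding the available surplus. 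Everything else — the identity $e^v|\na v|^2=4|\na e^{v/2}|^2$, the mean value estimate for the jumps, and the final collection of terms — is routine.
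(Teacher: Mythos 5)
Your overall architecture coincides with the paper's proof: the same three-term expansion of $B(v;v,v)$, Young's inequality with a face-dependent weight to split the consistency term, absorption of one half into the volume term (keeping $2\sum_K\int_K|\na e^{v/2}|^2dx$ out of the full $4\sum_K\int_K|\na e^{v/2}|^2dx$) and of the other half into the penalty term, and the mean-value-theorem comparison $|\jump{e^{v/2}}|^2\le\frac14\max\{e^{v_-},e^{v_+}\}|\jump{v}|^2$ at the end. The face-versus-element bookkeeping you worry about is handled automatically once the Young parameter is chosen per face as $\beta_f=\min\{\gamma_{K_-},\gamma_{K_+}\}/\max\{(e^v)_-,(e^v)_+\}$ with $\gamma_K^2=h_K\exp(-\|v\|_{L^\infty(K)})/(C_{\rm inv}^2p^2)$: regrouping the face contributions $|(\na v)_\pm|^2$ over $\pa K$ and applying the inverse trace inequality elementwise yields exactly $\tfrac12\sum_K\int_Ke^v|\na v|^2dx$, with no overcounting.

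The step that would fail as written is precisely the one you call the crux: you apply the inverse trace inequality of Lemma \ref{lem.inv} to $\na e^{v/2}$. But $e^{v/2}$ is not a polynomial when $v\in P_p(K)$, and inverse inequalities are a strictly finite-dimensional phenomenon; for a general $H^1(K)$ function only the multiplicative trace inequality of Lemma \ref{lem.mti} is available, and it does not produce the $p/\sqrt{h_K}$ scaling needed to match the penalty term. The paper circumvents this by applying the inverse trace inequality to $\na v$ itself (legitimate for piecewise polynomial $v$) and then converting between the unweighted and the $e^v$-weighted element norms via the pointwise bound $|\na v|^2\le\exp(\|v\|_{L^\infty(K)})\,e^v|\na v|^2$. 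That conversion is the true origin of the factor $\max\{\exp(\|v\|_{L^\infty(K_\pm)})\}$ in the stabilization function \eqref{def.alpha}; the mean-value-theorem step, to which you attribute it, only consumes the factor $(\max\{(e^v)_-,(e^v)_+\})^2$ (one power cancels against $\tfrac14\max\{e^{v_\pm}\}$, and the leftover product $\max\{(e^v)_\pm\}\max\{\exp\|v\|_{L^\infty(K_\pm)}\}\ge1$ is simply dropped at the very end). Your plan is repaired by pulling all exponential weights out of the face integral pointwise so as to reduce to the trace of $\na v$ before invoking Lemma \ref{lem.inv}; as it stands, the displayed trace estimate for $\na e^{v/2}$ is unjustified and the constants downstream do not close.
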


\begin{proof}
Definition \eqref{def.B} gives for $v\in H^1(\Omega,\T_h)$:
\begin{equation}
  B(v;v,v) = \sum_{K\in\T_h}\int_K e^v|\na v|^2 dx - 2\sum_{f\in\E_h}\int_f
	\aver{e^v\na v}\cdot\jump{v}ds
	+ \sum_{f\in\E_h}\int_f\frac{p^2}{{\tt h}}\alpha(v)|\jump{v}|^2 ds \label{2.aux1}
\end{equation}
We estimate the second integral by using Young's inequality:
$$
  2\sum_{f\in\E_h}\int_f	\aver{e^v\na v}\cdot\jump{v}ds 
	\le \sum_{f\in\E_h}\bigg(\int_f\beta_f^2\aver{e^v\na v}^2 ds
	+ \int_f\frac{1}{\beta_f^2}|\jump{v}|^2 ds\bigg),
$$
where $\beta_f>0$ is a parameter which will be defined below. The first integral
on the right-hand side is estimated according to
\begin{align*}
  \sum_{f\in\E_h}\int_f\beta_f^2\aver{e^v\na v}^2 ds
	&= \frac14\sum_{f\in\E_h}\int_f\beta_f^2\big|(e^v\na v)_- + (e^v\na v)_+\big|^2 ds \\
	&\le \frac12\sum_{f\in\E_h}\int_f\beta_f^2\big(|(e^v\na v)_-|^2
	+ |(e^v\na v)_+|^2\big)ds \\
	&= \frac12\sum_{f\in\E_h}\int_f\beta_f^2\big(\max\{(e^v)_-,(e^v)_+\}\big)^2
	\big(|(\na v)_-|^2 + |(\na v)_+|^2\big) ds.
\end{align*}
To proceed, we set 
$$
  \beta_f := \frac{\min\{\gamma_{K_-},\gamma_{K_+}\}}{\max\{(e^v)_-,(e^v)_+\}},
	\quad\mbox{where }\gamma_{K}^2:=\frac{h_{K}}{C_{\rm inv}^2p^2}
	\exp(-\|v\|_{L^\infty(K)}).
$$
Taking into account the inverse trace inequality \eqref{inv.ineq}, we infer that
\begin{align*}
  \sum_{f\in\E_h}\int_f\beta_f^2\aver{e^v\na v}^2 ds
	&\le \frac12\sum_{K\in\T_h}\int_{\pa K}\gamma_K^2|\na v|^2 ds
	\le \frac12 C_{\rm inv}^2\sum_{K\in\T_h}\gamma_K^2\frac{p^2}{h_K}\int_K|\na v|^2 dx \\
	&\le \frac12 C_{\rm inv}^2 p^2\sum_{K\in\T_h}\frac{\gamma_K^2}{h_K}
	\exp(\|v\|_{L^\infty(K)})\int_K e^v|\na v|^2 dx \\
	&= \frac12\sum_{K\in\T_h}\int_K e^v|\na v|^2 dx.
\end{align*}
Consequently, we obtain
$$
  2\sum_{f\in\E_h}\int_f	\aver{e^v\na v}\cdot\jump{v}ds  
	\le \sum_{f\in\E_h}\int_f\frac{1}{\beta_f^2}|\jump{v}|^2 ds
	+ \frac12\sum_{K\in\T_h}\int_K e^v|\na v|^2 dx.
$$
Inserting this estimate into \eqref{2.aux1}, it follows that
$$
  B(v;v,v) \ge \frac12\sum_{K\in\T_h} e^v|\na v|^2 dx
	+ \sum_{f\in\E_h}\int_f\bigg(\frac{p^2}{{\tt h}}\alpha(v) - \frac{1}{\beta_f^2}\bigg)
	|\jump{v}|^2 ds.
$$
With the definitions of $\alpha(v)$ (see \eqref{def.alpha})
and $\beta_f$ as well as the property ${\tt h}\le h_{K_\pm}$, 
the difference in the bracket can be computed as
\begin{align*}
  \frac{p^2}{{\tt h}}\alpha(v) - \frac{1}{\beta_f^2}
	&\ge \frac32\frac{p^2}{{\tt h}}C_{\rm inv}^2\big(\max\{(e^v)_-,(e^v)_+\}\big)^2
	\max\big\{\exp(\|v\|_{L^\infty(K_-)}),\exp(\|v\|_{L^\infty(K_+)})\big\} \\
	&\phantom{xx}{}- \frac{C_{\rm inv}^2p^2(\max\{(e^v)_-,(e^v)_+\})^2}{\min\{
	h_{K_-}\exp(-\|v\|_{L^\infty(K_-)}),h_{K_+}\exp(-\|v\|_{L^\infty(K_+)})\}} \\
	&\ge \frac{p^2}{2{\tt h}}C_{\rm inv}^2(\max\{(e^v)_-,(e^v)_+\})^2
	\max\big\{\exp(\|v\|_{L^\infty(K_-)}),\exp(\|v\|_{L^\infty(K_+)})\big\} \\
	&= \frac13\frac{p^2}{{\tt h}}\alpha(v).
\end{align*}
This shows that
\begin{equation}\label{2.aux2}
  B(v;v,v) \ge 2\sum_{K\in\T_h}\int_K|\na e^{v/2}|^2 dx
  + \frac13\sum_{f\in\E_h}\int_f\frac{p^2}{{\tt h}}\alpha(v)|\jump{v}|^2 ds.
\end{equation}
By the definition of the jumps and the mean-value theorem for $x\in f$,
$$
  |\jump{e^{v/2}}|^2 = \big|e^{v_-/2}-e^{v_+/2}\big|^2
	\le \frac14\max\{e^{v_-},e^{v_+}\}|\jump{v}|^2.
$$
We use definition \eqref{def.alpha} and insert the previous estimate into 
\eqref{2.aux2}:
\begin{align*}
  B(v;v,v) &\ge 2\sum_{K\in\T_h}\int_K|\na e^{v/2}|^2 dx 
	+ 2C_{\rm inv}^2\sum_{f\in\E_h}\int_f\frac{p^2}{{\tt h}}\max\{(e^v)_-,(e^v)_+\} \\
	&\phantom{xx}{}\times
	\max\big\{\exp(\|v\|_{L^\infty(K_-)}),\exp(\|v\|_{L^\infty(K_+)})\big\}
	|\jump{e^{v/2}}|^2 ds.
\end{align*}
Since $(e^{v})_\pm\ge \exp(-\|v\|_{L^\infty(K_\pm)})$, we have
$$
  \max\{(e^v)_-,(e^v)_+\}
	\max\big\{\exp(\|v\|_{L^\infty(K_-)}),\exp(\|v\|_{L^\infty(K_+)})\big\} \ge 1.
$$
This finishes the proof.
\end{proof}

\begin{proposition}[Existence]\label{prop.ex}
Let $\eps>0$. Given $\lambda_h^{k-1}\in V_h$, the DG scheme \eqref{dg} admits a solution
$\lambda_h^k\in V_h$.
\end{proposition}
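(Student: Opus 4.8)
The plan is to apply the Leray--Schauder fixed-point theorem to a linearization of \eqref{dg}. For fixed $\mu\in V_h$, I would define $S(\mu)=:\lambda\in V_h$ as the solution of the linear problem: for all $\phi_h\in V_h$,
\[
  \triangle t\,B(\mu;\lambda,\phi_h) + \eps\int_\Omega\lambda\phi_h\,dx
  = \triangle t\int_\Omega e^\mu(1-e^\mu)\phi_h\,dx - \int_\Omega\big(e^\mu-e^{\lambda_h^{k-1}}\big)\phi_h\,dx .
\]
Since $B$ is linear in its second argument, a fixed point $\mu=S(\mu)$ is precisely a solution of \eqref{dg} with $\lambda_h^k=\mu$, so it is enough to produce such a fixed point.

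First I would verify that $S$ is well defined and continuous. Inspecting the proof of Lemma~\ref{lem.coerc} shows that it only uses pointwise bounds of the type $e^{-\mu(x)}\le\exp(\|\mu\|_{L^\infty(K)})$ on each element, so it carries over verbatim to the frozen form and gives $B(\mu;v,v)\ge 0$ for all $v\in V_h$. Hence the bilinear form on the left-hand side above is coercive on the finite-dimensional space $V_h$ with constant $\ge\eps$, and it is bounded (the coefficients $e^\mu$ and $\alpha(\mu)$ being bounded on the fixed mesh, with face terms controlled via Lemma~\ref{lem.inv}); thus the linear problem has a unique solution. Writing it as a linear system with matrix $A(\mu)$ and right-hand side $b(\mu)$, the entries depend continuously on $\mu\in V_h$ (they are built from $e^\mu$ and the norms $\|\mu\|_{L^\infty(K)}$, which are continuous in the coefficients of $\mu$) and $A(\mu)$ is invertible, so $S$ is continuous; being a continuous self-map of a finite-dimensional space, it is compact.

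Next I would establish the a priori bound required by Leray--Schauder. Let $\sigma\in[0,1]$ and $\lambda\in V_h$ satisfy $\lambda=\sigma S(\lambda)$. For $\sigma=0$ this forces $\lambda=0$; for $\sigma\in(0,1]$, using the linearity of $B$ in its second slot, $\lambda$ satisfies, for all $\phi_h\in V_h$,
\[
  \sigma\int_\Omega\big(e^{\lambda}-e^{\lambda_h^{k-1}}\big)\phi_h\,dx
  + \triangle t\,B(\lambda;\lambda,\phi_h) + \eps\int_\Omega\lambda\phi_h\,dx
  = \sigma\triangle t\int_\Omega e^{\lambda}(1-e^{\lambda})\phi_h\,dx .
\]
Taking $\phi_h=\lambda$ and discarding the nonnegative term $\triangle t\,B(\lambda;\lambda,\lambda)$ via Lemma~\ref{lem.coerc}, one gets
\[
  \eps\|\lambda\|_{L^2(\Omega)}^2
  \le \sigma\int_\Omega\Big[\big(e^{\lambda_h^{k-1}}-e^{\lambda}\big)\lambda
  + \triangle t\,e^{\lambda}(1-e^{\lambda})\lambda\Big]dx .
\]
The key observation is that, for $|b|\le M:=\|\lambda_h^{k-1}\|_{L^\infty(\Omega)}$, the scalar function $s\mapsto(e^{b}-e^{s})s+\triangle t\,e^{s}(1-e^{s})s$ is continuous and tends to $-\infty$ as $|s|\to\infty$ uniformly in $b$ (the terms $-e^{s}s$ and $-\triangle t\,e^{2s}s$ dominate as $s\to+\infty$, while $e^{b}s$ dominates and is negative as $s\to-\infty$), hence it is bounded above by some constant $C(M,\triangle t)$. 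This yields $\eps\|\lambda\|_{L^2(\Omega)}^2\le C(M,\triangle t)\,|\Omega|$, i.e.\ $\|\lambda\|_{L^2(\Omega)}\le R:=(C(M,\triangle t)\,|\Omega|/\eps)^{1/2}$, a bound independent of $\sigma$. The Leray--Schauder theorem then provides a fixed point of $S$, which solves \eqref{dg}.

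The main obstacle is this last a priori estimate: the rest is routine finite-dimensional linear algebra, but closing the bound relies essentially on the regularization term (this is why $\eps>0$ is assumed, whereas $p=1$ needs the separate argument of the appendix) together with the sign and growth of the combined reaction-plus-time-increment term. A secondary point worth checking carefully is that the coercivity computation of Lemma~\ref{lem.coerc} really does survive the freezing of the coefficient $e^\mu$ in the first slot of $B$, which, as noted above, it does.
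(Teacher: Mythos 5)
Your proof is correct and follows essentially the same strategy as the paper: a Leray--Schauder argument whose a priori bound comes from testing with $\lambda$, discarding $B(\lambda;\lambda,\lambda)\ge 0$ via the coercivity lemma, and controlling the remaining zero-order terms, with the $\eps$-regularization supplying the $L^2$ coercivity. The only (harmless) deviations are that you keep the frozen form $B(\mu;\cdot,\cdot)$ on the left-hand side of the linearized problem — the paper inverts only the $\eps$ mass matrix and puts everything else, multiplied by $\sigma$, on the right — and that you bound the integrand $(e^{b}-e^{s})s+\triangle t\,e^{s}(1-e^{s})s$ pointwise rather than via the convexity inequality $(e^{b}-e^{s})s\le s(e^{b})-s(e^{s})$ and the sign of $e^{s}(1-e^{s})s$ used in the paper.
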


\begin{proof}
The idea is to apply the Leray-Schauder fixed-point theorem.
We define the fixed-point operator $\Phi:V_h\times[0,1]\to V_h$ by
$\Phi(w,\sigma) = w$, where $v\in V_h$ is the unique solution to the linear problem
\begin{equation}\label{2.lin}
  \eps\int_\Omega v\phi dx
	= \sigma\int_\Omega\big(e^{\lambda_h^{k-1}} - e^w
	+ \triangle t e^w(1-e^w)\big)\phi dx - \sigma\triangle t B(w;w,\phi)
\end{equation}
for $\phi\in V_h$. The left-hand side defines the bilinear form $a(w,\phi)$,
which is coercive, $a(w,w)=\eps\|w\|_{L^2(\Omega)}^2$. The right-hand side
defines a linear form which is continuous on $L^2(\Omega)$ (using the fact
that in finite dimensions, all norms are equivalent). Thus, $\Phi$ is well defined
by the Lax-Milgram lemma. As the right-hand side of \eqref{2.lin}
is continuous with respect to $w$,
standard arguments show that $\Phi$ is continuous. Furthermore, $\Phi(w,0)=0$.
It remains to prove that there exists a uniform bound for all fixed points
of $\Phi$. To this end, let $v\in V_h$ and $\sigma\in[0,1]$ 
such that $\Phi(v,\sigma)=v$.

Let $s(v)=v(\log v-1)+1\ge 0$. The convexity of $s$ implies that 
\begin{equation}\label{2.s}
  (e^{\lambda_h^{k-1}}-e^v)v = (e^{\lambda_h^{k-1}}-e^v)s'(e^v)
	\le s(e^{\lambda_h^{k-1}}) - s(e^v).
\end{equation}
Then, using the test function $\phi=v$ in \eqref{2.lin} gives, because of
the properties $B(v;v,v)\ge 0$ (Lemma \ref{lem.coerc}) and $e^v(1-e^v)v\le 0$,
\begin{align}
  \eps\|v\|_{L^2(\Omega)}^2
	&= \sigma\int_\Omega(e^{\lambda_h^{k-1}}-e^v)v dx
	+ \sigma\triangle t\int_\Omega e^v(1-e^v)v dx
	- \sigma\triangle t B(v;v,v) \nonumber \\
	&\le \sigma\int_\Omega\big(s(e^{\lambda_h^{k-1}}) - s(e^v)\big)dx
	\le \sigma\int_\Omega s(e^{\lambda_h^{k-1}})dx. \label{2.L2}
\end{align}
This is the desired uniform bound. We infer the existence of a solution
to \eqref{dg} by the Leray-Schauder fixed-point theorem.
\end{proof}


\subsection{Discrete entropy inequality and exponential decay}\label{sec.ent}

Let $\lambda_h^k\in V_h$ be a solution to \eqref{dg}. We show that the entropy
$$
  S_h^k := \int_\Omega s(e^{\lambda_h^k})dx, \quad \mbox{where }
	s(u) = u(\log u-1)+1,
$$
is nonincreasing with respect to $k\in\N$.

\begin{lemma}[Discrete entropy inequality]\label{lem.epi}
Let $\eps\ge 0$ and let $\lambda_h^k\in V_h$ be a solution to \eqref{dg}. Then
\begin{equation}\label{2.epi}
  S_h^k + C_0\triangle t\int_\Omega\bigg|e^{\lambda_h^k/2}-\frac{1}{|\Omega|}
	\int_\Omega e^{\lambda_h^k/2}dy\bigg|^2 dx
	+ \triangle t\int_\Omega e^{\lambda_h^k}(e^{\lambda_h^k}-1)\lambda_h^k dx
	\le S_h^{k-1},
\end{equation}
where the constant $C_0>0$ only depends on $C_{\rm inv}$ and $C_{\rm PW}$ from
Lemmas \ref{lem.inv} and \ref{lem.dpw}.
\end{lemma}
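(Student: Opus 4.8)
The plan is to test the DG scheme \eqref{dg} with $\phi_h = \lambda_h^k$ and exploit the convexity of $s(u) = u(\log u - 1) + 1$ exactly as in the existence proof, but now keeping the good diffusion term. First I would write down
\[
  \int_\Omega \big(e^{\lambda_h^k} - e^{\lambda_h^{k-1}}\big)\lambda_h^k\, dx
  + \triangle t\, B(\lambda_h^k;\lambda_h^k,\lambda_h^k)
  + \eps\int_\Omega |\lambda_h^k|^2\, dx
  = \triangle t\int_\Omega e^{\lambda_h^k}(1 - e^{\lambda_h^k})\lambda_h^k\, dx.
\]
For the discrete time derivative term, I use that $\lambda_h^k = \log e^{\lambda_h^k} = s'(e^{\lambda_h^k})$, so by convexity of $s$,
\[
  \big(e^{\lambda_h^{k-1}} - e^{\lambda_h^k}\big)\lambda_h^k
  = \big(e^{\lambda_h^{k-1}} - e^{\lambda_h^k}\big)s'(e^{\lambda_h^k})
  \le s(e^{\lambda_h^{k-1}}) - s(e^{\lambda_h^k}),
\]
which integrates to $S_h^{k-1} - S_h^k \ge \int_\Omega (e^{\lambda_h^k} - e^{\lambda_h^{k-1}})\lambda_h^k\,dx$. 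Rearranging the tested identity and dropping the nonnegative $\eps$-term gives
\[
  S_h^k + \triangle t\, B(\lambda_h^k;\lambda_h^k,\lambda_h^k)
  + \triangle t\int_\Omega e^{\lambda_h^k}(e^{\lambda_h^k} - 1)\lambda_h^k\, dx
  \le S_h^{k-1}.
\]

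The remaining work is to bound $B(\lambda_h^k;\lambda_h^k,\lambda_h^k)$ from below by the Poincaré–Wirtinger-type quantity appearing in \eqref{2.epi}. Here I invoke the coercivity estimate of Lemma \ref{lem.coerc}:
\[
  B(\lambda_h^k;\lambda_h^k,\lambda_h^k)
  \ge 2\sum_{K\in\T_h}\int_K |\na e^{\lambda_h^k/2}|^2\, dx
  + 2C_{\rm inv}^2 \sum_{f\in\E_h}\int_f \frac{p^2}{{\tt h}}\big|\jump{e^{\lambda_h^k/2}}\big|^2\, ds.
\]
Writing $\xi = e^{\lambda_h^k/2} \in H^1(\Omega,\T_h)$ (it is piecewise smooth, hence in the broken Sobolev space), the right-hand side controls, up to the constant $2\min\{1, C_{\rm inv}^2\}$, the quantity $\sum_K \|\na\xi\|_{L^2(K)}^2 + \sum_f \int_f \frac{p^2}{{\tt h}}|\jump{\xi}|^2\,ds$, which by the discrete Poincaré–Wirtinger inequality of Lemma \ref{lem.dpw} dominates $C_{\rm PW}^{-2}\|\xi - |\Omega|^{-1}\int_\Omega \xi\,dy\|_{L^2(\Omega)}^2$. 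Setting $C_0 := 2\min\{1, C_{\rm inv}^2\}/C_{\rm PW}^2$, which depends only on $C_{\rm inv}$ and $C_{\rm PW}$, yields
\[
  \triangle t\, B(\lambda_h^k;\lambda_h^k,\lambda_h^k)
  \ge C_0\,\triangle t \int_\Omega \bigg|e^{\lambda_h^k/2} - \frac{1}{|\Omega|}\int_\Omega e^{\lambda_h^k/2}\,dy\bigg|^2\, dx,
\]
and combining with the inequality from the previous paragraph gives \eqref{2.epi}.

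I do not expect a serious obstacle here — the proof is essentially a bookkeeping exercise combining convexity of $s$ with the two earlier lemmas. The only point that needs a little care is the justification that $e^{\lambda_h^k/2}$ legitimately belongs to $H^1(\Omega,\T_h)$ so that Lemmas \ref{lem.coerc} and \ref{lem.dpw} apply (it does, since on each simplex $K$ the polynomial $\lambda_h^k|_K$ is smooth, so $e^{\lambda_h^k/2}|_K \in H^1(K)$, being smooth on a bounded Lipschitz domain), and to make sure the reaction term $\triangle t\int_\Omega e^{\lambda_h^k}(e^{\lambda_h^k}-1)\lambda_h^k\,dx$ is simply carried along unchanged rather than estimated — it is this term, written in the mixed sign form $e^u(u-1)\log u$ of \eqref{1.epi}, that will later be split and sign-analyzed in the proof of the exponential decay (Proposition \ref{prop.decay}), but at the level of the entropy inequality itself it just sits on the left-hand side.
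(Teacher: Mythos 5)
Your proof is correct and follows essentially the same route as the paper: test with $\phi_h=\lambda_h^k$, use the convexity inequality $(e^{\lambda_h^{k-1}}-e^{\lambda_h^k})s'(e^{\lambda_h^k})\le s(e^{\lambda_h^{k-1}})-s(e^{\lambda_h^k})$, drop the nonnegative $\eps$-term, and bound $B(\lambda_h^k;\lambda_h^k,\lambda_h^k)$ from below via Lemma \ref{lem.coerc} combined with Lemma \ref{lem.dpw}, yielding the same constant $C_0=2\min\{1,C_{\rm inv}^2\}C_{\rm PW}^{-2}$. No further comment is needed.
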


\begin{proof}
We take $\phi_h=\lambda_h^k$ as a test function in \eqref{dg} and use
inequality \eqref{2.s} to find that
\begin{align}
  S_h^k - S_h^{k-1} &= \int_\Omega
	\big(s(e^{\lambda_h^k})-s(e^{\lambda_h^{k-1}})\big)dx \nonumber \\
	&= -\triangle t B(\lambda_h^k;\lambda_h^k,\lambda_h^k)
	- \eps\int_\Omega(\lambda_h^k)^2 dx - \triangle t\int_\Omega e^{\lambda_h^k}
	(e^{\lambda_h^k}-1)dx \nonumber \\
	&\le -\triangle t B(\lambda_h^k;\lambda_h^k,\lambda_h^k)
	- \triangle t\int_\Omega e^{\lambda_h^k}(e^{\lambda_h^k}-1)\lambda_h^k \lambda_h^kdx.
	\label{2.epiB}
\end{align}
It remains to estimate the first term on the right-hand side. For this, we use
the coercivity estimate of Lemma \ref{lem.coerc} and the discrete 
Poincar\'e-Wirtinger inequality from Lemma \ref{lem.dpw}:
\begin{align*}
  B(\lambda_h^k;\lambda_h^k,\lambda_h^k)
	&\ge 2\min\{1,C_{\rm inv}^2\}\bigg(\sum_{K\in\T_h}\int_K|\na e^{\lambda_h^k/2}|^2 dx
	+ \sum_{f\in\E_h}\int_f\frac{p^2}{{\tt h}}|\jump{e^{\lambda_h^k/2}}|^2 ds\bigg) \\
  &\ge 2\min\{1,C_{\rm inv}^2\}C_{\rm PW}^{-2}
	\int_\Omega\bigg|e^{\lambda_h^k/2}-\frac{1}{|\Omega|}
	\int_\Omega e^{\lambda_h^k/2}dx\bigg|^2 dx.
\end{align*}
Setting $C_0=2\min\{1,C_{\rm inv}^2\}C_{\rm PW}^{-2}$ finishes the proof.
\end{proof}

We wish to bound the total mass $\int_\Omega \exp(\lambda_h^k)dx$ from below and
above. Since $s(u)=u(\log u-1)+1$ is invertible only on $[0,1]$ and on $[1,\infty)$
but not globally on $[0,\infty)$, we introduce the following functions:
\begin{align*}
  & \sigma_-:[0,\infty)\to[0,1], \quad \sigma_-(v)=(s|_{[0,1]})^{-1}(v) 
	\mbox{ for }v\in[0,1], \ \sigma_-(v)=0\mbox{ for }v\in[1,\infty), \\
	& \sigma_+:[0,\infty)\to[1,\infty), \quad \sigma_+(v)=(s|_{[1,\infty)})^{-1}(v)
	\mbox{ for }v\in[0,\infty).
\end{align*}
In particular, $\sigma_-\circ s=\mbox{id}$ on $[0,1]$ and $\sigma_+\circ s=\mbox{id}$
on $[1,\infty)$. 

\begin{lemma}[Bounds for the total mass]\label{lem.mass}
Let $\eps\ge 0$ and let $\lambda_h^k$ be a solution to \eqref{dg}. Then
$$
  \sigma_-\bigg(\frac{S_h^0}{|\Omega|}\bigg)
	\le \frac{1}{|\Omega|}\int_\Omega e^{\lambda_h^k}dx
	\le \sigma_+\bigg(\frac{S_h^0}{|\Omega|}\bigg).
$$
\end{lemma}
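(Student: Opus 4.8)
The plan is to combine the monotonicity of the discrete entropy (a consequence of Lemma~\ref{lem.epi}) with Jensen's inequality for the convex function $s$.

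First I would check that both correction terms on the left-hand side of the entropy inequality \eqref{2.epi} are nonnegative. The Poincar\'e--Wirtinger term is manifestly $\ge 0$, and for the reaction term one writes $u=e^{\lambda_h^k}>0$, so that its integrand equals $u(u-1)\log u$; since $u-1$ and $\log u$ have the same sign and the product vanishes at $u=1$, we get $u(u-1)\log u\ge 0$ pointwise, hence $\int_\Omega e^{\lambda_h^k}(e^{\lambda_h^k}-1)\lambda_h^k\,dx\ge 0$. Dropping these two terms, \eqref{2.epi} gives $S_h^k\le S_h^{k-1}$ for every $k\in\N$, and iterating yields $S_h^k\le S_h^0$. (All these integrals are finite because $\lambda_h^k$ is piecewise polynomial, so $e^{\lambda_h^k}$ is bounded and $s(e^{\lambda_h^k})\in L^\infty(\Omega)$.)

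Second, since $s''(u)=1/u>0$ on $(0,\infty)$, the function $s$ is convex there, so Jensen's inequality for the normalized measure $dx/|\Omega|$ on $\Omega$ gives
\[
  s\!\left(\frac{1}{|\Omega|}\int_\Omega e^{\lambda_h^k}\,dx\right)
  \le \frac{1}{|\Omega|}\int_\Omega s(e^{\lambda_h^k})\,dx
  = \frac{S_h^k}{|\Omega|}\le\frac{S_h^0}{|\Omega|}.
\]
Writing $m:=\frac{1}{|\Omega|}\int_\Omega e^{\lambda_h^k}\,dx>0$, this reads $s(m)\le S_h^0/|\Omega|$, and note $S_h^0/|\Omega|\ge 0$ lies in the domain of both $\sigma_\pm$ because $s\ge0$.

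Finally I would convert the scalar inequality $s(m)\le S_h^0/|\Omega|$ into the asserted two-sided bound using the monotonicity of $s$. As $s'(u)=\log u$, the map $s$ is strictly decreasing on $(0,1]$, strictly increasing on $[1,\infty)$, with global minimum $s(1)=0$. For the lower bound: if $m\ge 1$ then $m\ge 1\ge\sigma_-(S_h^0/|\Omega|)$ by definition of $\sigma_-$; if $m<1$ then either $S_h^0/|\Omega|\ge1$, whence $\sigma_-(S_h^0/|\Omega|)=0\le m$, or $S_h^0/|\Omega|<1$, whence $m$ and $\sigma_-(S_h^0/|\Omega|)$ both lie in $[0,1]$, on which $s$ is decreasing, and $s(m)\le S_h^0/|\Omega|=s(\sigma_-(S_h^0/|\Omega|))$ forces $m\ge\sigma_-(S_h^0/|\Omega|)$. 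The upper bound is symmetric: if $m\le1$ then $m\le1\le\sigma_+(S_h^0/|\Omega|)$; if $m>1$ then $m$ and $\sigma_+(S_h^0/|\Omega|)$ lie in $[1,\infty)$, on which $s$ is increasing, and $s(m)\le s(\sigma_+(S_h^0/|\Omega|))$ gives $m\le\sigma_+(S_h^0/|\Omega|)$. I do not expect any genuine obstacle here: the only point requiring care is exactly this last step, since $s$ is not globally invertible, so one must keep track of which branch $m$ lies on; once the monotonicity of $s$ on each of $[0,1]$ and $[1,\infty)$ and the minimum at $u=1$ are in hand, the case analysis is routine.
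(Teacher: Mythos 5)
Your proof is correct. The main ingredients are the same as in the paper: the monotonicity $S_h^k\le S_h^0$ coming from the discrete entropy inequality, and Jensen's inequality for the convex entropy density $s$. Where you diverge is in how the non-invertibility of $s$ is handled. You apply Jensen once, to $s$ itself, obtaining the single scalar inequality $s(m)\le S_h^0/|\Omega|$ for $m=|\Omega|^{-1}\int_\Omega e^{\lambda_h^k}dx$, and then run a case analysis on whether $m$ lies in $[0,1]$ or $[1,\infty)$, using the monotonicity of $s$ on each branch together with $s\circ\sigma_\pm=\mathrm{id}$. The paper instead treats the two bounds by separate devices: for the lower bound it truncates, setting $\beta_k=\min\{1,e^{\lambda_h^k}\}$ so that the average of $\beta_k$ is forced into $[0,1]$ where $\sigma_-\circ s=\mathrm{id}$, and for the upper bound it applies Jensen a second time to the \emph{concave} function $\sigma_+$. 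Your version is arguably more elementary (one Jensen application, no truncation, no appeal to concavity of $\sigma_+$) at the cost of an explicit but routine branch-by-branch check; both arguments are equally rigorous, and your observation that the only delicate point is tracking which branch $m$ lies on is exactly the issue the paper's truncation is designed to sidestep.
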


Observe that if $S_h^0<|\Omega|$, the lower bound 
$\sigma_-(S_h^0/|\Omega|)$ is positive.
Thus, the total mass can never vanish, which excludes the case of solutions
converging for $k\to\infty$ to the zero solution. The reason for the difference
between $S_h^0<|\Omega|$ and $S_h^0\ge |\Omega|$ lies in the fact that 
\eqref{sc.eq}-\eqref{sc.bc} admits two steady states, $\lambda^k_h=0$ 
(corresponding to $u^k=e^{\lambda^k_h}=1$) and $\lambda^k_h=-\infty$
(corresponding to $u^k=0$). The assumption $S_h^0<|\Omega|$
will be crucial to prove the decay estimate for the entropy; see
Proposition \ref{prop.decay}. We discuss the case $S_h^0\ge|\Omega|$ in 
Remark \ref{rem.S0}.

\begin{proof}[Proof of Lemma \ref{lem.mass}]
First, we show the lower bound. If $S_h^0\ge|\Omega|$, we have 
$\sigma_-(S_h^0/|\Omega|)=0$, and there is nothing to prove. Thus, let
$S_h^0<|\Omega|$. Set $\beta_k=\min\{1,\exp(\lambda_h^k)\}\le 1$. As $s$ is convex,
we infer from Jensen's inequality and $s(\beta_k)=0$ for $\lambda_h^k>0$ that
$$
  s\bigg(\frac{1}{|\Omega|}\int_\Omega\beta_k dx\bigg)
	\le \frac{1}{|\Omega|}\int_\Omega s(\beta_k)dx 
	= \frac{1}{|\Omega|}\int_{\{\lambda_h^k\le 0\}}s(e^{\lambda_h^k})dx
	\le \frac{S_h^k}{|\Omega|} \le \frac{S_h^0}{|\Omega|},
$$
where in the last step we have used the monotonicity of $k\mapsto S_h^k$.
With this preparation, we are able to verify the lower bound. As $\sigma_-$ is
decreasing, we find that
$$
  \frac{1}{|\Omega|}\int_\Omega e^{\lambda_h^k}dx
	\ge \frac{1}{|\Omega|}\int_\Omega \beta_k dx
	= (\sigma_-\circ s)\bigg(\frac{1}{|\Omega|}\int_\Omega \beta_k dx\bigg)
	\ge \sigma_-\bigg(\frac{S_h^0}{|\Omega|}\bigg).
$$

For the upper bound, we can assume that $\int_\Omega\exp(\lambda_h^k)dx\ge|\Omega|$,
since otherwise, the inequality is trivially satisfied in view of $\sigma_+(v)\ge 1$.
By the concavity of $\sigma_+$, we can again apply the Jensen inequality:
$$
  \sigma_+\bigg(\frac{S^0_h}{|\Omega|}\bigg)
	\ge \sigma_+\bigg(\frac{1}{|\Omega|}\int_\Omega s(e^{\lambda_h^k}) dx\bigg)
	\ge \frac{1}{|\Omega|}\int_\Omega(\sigma_+\circ s)(e^{\lambda_h^k}) dx
	= \frac{1}{|\Omega|}\int_\Omega e^{\lambda_h^k} dx,
$$
proving the claim.
\end{proof}

\begin{proposition}[Discrete entropy decay]\label{prop.decay}
Let $\eps\ge 0$ and let $\lambda_h^k$ be a solution to \eqref{dg}. 
We assume that $S_h^0<|\Omega|$. Then there exists a constant $C_1>0$, only depending
on $S_h^0$, such that for all $k\in\N$,
\begin{equation}\label{2.decay}
  S_h^k \le (1+C_1\triangle t)^{-k}S_h^0.
\end{equation}
In particular, with $\eta = \log(1+C_1\triangle t)/(C_1\triangle t)<1$, 
we have the exponential decay
$$
  S_h^k \le S_h^0 e^{-\eta C_1 k\triangle t}, \quad k\in\N.
$$
\end{proposition}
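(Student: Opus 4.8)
The plan is to start from the discrete entropy inequality \eqref{2.epi} of Lemma \ref{lem.epi} and bound from below the two dissipation terms on its left-hand side by a multiple of $S_h^k$. Concretely, I would show
\[
  C_0\int_\Omega\bigg|e^{\lambda_h^k/2}-\frac{1}{|\Omega|}\int_\Omega e^{\lambda_h^k/2}dy\bigg|^2 dx
  + \int_\Omega e^{\lambda_h^k}(e^{\lambda_h^k}-1)\lambda_h^k dx \ge C_1 S_h^k,
\]
with $C_1>0$ depending only on $S_h^0$; then \eqref{2.epi} reads $S_h^k+C_1\triangle t\, S_h^k\le S_h^{k-1}$, i.e.\ $S_h^k\le(1+C_1\triangle t)^{-1}S_h^{k-1}$, and iterating in $k$ gives \eqref{2.decay}. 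The final exponential form follows from $(1+x)^{-1}\le e^{-\eta x}$ with $\eta=\log(1+x)/x$ evaluated at $x=C_1\triangle t$, since $\log(1+x)<x$ for $x>0$; because $0<\triangle t<1$, one can take $\eta$ bounded below by $\log(1+C_1)/C_1$ if a $\triangle t$-independent rate is wanted.

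The heart of the argument is the pointwise/functional inequality relating the two dissipation terms to the entropy density $s(u)=u(\log u-1)+1$, where $u=e^{\lambda_h^k}$ so that $\lambda_h^k=\log u$. The second dissipation term is $\int_\Omega u(u-1)\log u\,dx$, which is the discrete analogue of the right-hand side of the continuous entropy production \eqref{1.epi}; note $u(u-1)\log u\ge 0$ always, and it controls $s(u)$ well when $u$ is bounded away from $1$, but it degenerates near $u=1$. There the first (Poincaré–Wirtinger-type) term takes over: writing $m_h=\frac{1}{|\Omega|}\int_\Omega e^{\lambda_h^k/2}dy$, the term $C_0\|e^{\lambda_h^k/2}-m_h\|_{L^2(\Omega)}^2$ penalizes spatial oscillations of $\sqrt u$, and combined with the mass bounds of Lemma \ref{lem.mass} — which, under $S_h^0<|\Omega|$, pin $\frac{1}{|\Omega|}\int_\Omega u\,dx$ into the compact interval $[\sigma_-(S_h^0/|\Omega|),\sigma_+(S_h^0/|\Omega|)]\subset(0,\infty)$ with the lower endpoint strictly positive — it forces $u$ to be close to a \emph{positive} constant, hence close to $1$ only in the regime where $s$ is small and the reaction term is weak. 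So the strategy is a case distinction: on the set where $u$ is far from $1$ use the reaction dissipation; where $u$ is near $1$ use the Poincaré term together with the mass lower bound to control $s(u)\approx\frac12(u-1)^2$.

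Making this rigorous I would proceed as follows. First, record the elementary bounds: for $u\ge 0$ there are constants $c,c'>0$ (depending only on the admissible mass range, hence on $S_h^0$) with $s(u)\le c\,u(u-1)\log u$ whenever $|u-1|\ge\delta$, and $s(u)\le c'(u-1)^2$ whenever $|u-1|\le\delta$, for a fixed $\delta\in(0,1)$ to be chosen. Second, on the "near $1$" part, bound $\int_\Omega(u-1)^2dx$ by $\int_\Omega(\sqrt u-1)^2dx$ times a factor controlled by $\|u\|_{L^\infty}$-type quantities — but since no $L^\infty$ bound is available uniformly, it is cleaner to work directly with $\sqrt u$: use $|u-1|=|\sqrt u-1|\,|\sqrt u+1|$ and split again according to whether $\sqrt u$ is large or not, absorbing the large part into the reaction term. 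Third, relate $\int_\Omega(\sqrt u - 1)^2dx$ to $\int_\Omega(\sqrt u - m_h)^2 dx$ using that $m_h$ is itself close to $1$: by the mass lower bound and Jensen, $m_h^2\le\frac{1}{|\Omega|}\int_\Omega u\,dx$ is bounded, and a short computation shows $|m_h-1|$ is controlled by $S_h^k$ (e.g.\ via $\big(\frac{1}{|\Omega|}\int_\Omega u\,dx - 1\big)$ and the mass bounds). Combining these pieces yields the claimed lower bound $C_1 S_h^k$ with $C_1$ depending only on $S_h^0$ (through the mass range and the chosen $\delta$).

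The main obstacle is precisely this local analysis near $u=1$: neither dissipation term alone controls $s(u)$ there, and without a uniform $L^\infty$ bound on $u$ one must be careful converting between $u-1$ and $\sqrt u-1$ on the region where $u$ is large. I expect the cleanest route is to keep everything in the variable $\sqrt u$ (matching the form of the Poincaré term), use the reaction term to dominate contributions from $\{u\ge M\}$ for a suitable $M>1$, and on $\{u\le M\}$ use the equivalence of $s(u)$, $(u-1)^2$, and $(\sqrt u-1)^2$ with constants depending on $M$ and on the mass range; $M$ and $\delta$ are then fixed in terms of $S_h^0$ alone, so $C_1$ depends only on $S_h^0$ as asserted. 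Once the functional inequality is in hand, the recursion and the passage to the exponential bound are immediate.
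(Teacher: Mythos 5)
Your overall skeleton is the paper's: lower-bound the two dissipation terms in \eqref{2.epi} by $C_1 S_h^k$, deduce $S_h^k\le(1+C_1\triangle t)^{-1}S_h^{k-1}$, iterate, and convert to exponential decay via $\log(1+x)<x$. That part is fine. The gap is in the functional inequality itself: your case distinction is oriented the wrong way around, and it rests on a false pointwise bound. You claim that $s(u)\le c\,u(u-1)\log u$ holds whenever $|u-1|\ge\delta$, with the reaction term ``degenerating near $u=1$.'' In fact the opposite is true. Near $u=1$ one has $s(u)\sim\frac12(u-1)^2$ and $u(u-1)\log u\sim(u-1)^2$, so the ratio tends to $\frac12$ and the reaction term controls $s(u)$ there with no help needed. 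The genuine degeneracy is at $u=0$: there $s(u)\to 1$ while $u(u-1)\log u\to 0$, so no constant $c$ can make your inequality hold on $\{|u-1|\ge\delta\}$, which contains a neighbourhood of $0$. Your ``far from $1$'' case therefore fails exactly on the set where $u$ is small, and your ``near $1$'' case spends the Poincar\'e term and the hypothesis $S_h^0<|\Omega|$ in a region where they are not needed. (The ``near $1$'' step has a secondary problem as well: the Poincar\'e term measures deviation from the mean $m_h$ of $e^{\lambda_h^k/2}$ and vanishes on constant states $u\equiv c\ne1$, and the bound $|m_h-1|^2\lesssim S_h^k$ you invoke comes back with a constant that need not be small, so the inequality would not close; but since the reaction term already handles that region, this is moot once the split is corrected.)

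The paper's proof splits instead at a small threshold $\alpha=\eps_0\theta\in(0,1)$: on $\{e^{\lambda_h^k}>\alpha\}$ it uses the monotonicity of $v\mapsto s(e^v)/\bigl(e^v(e^v-1)v\bigr)$ (Lemma \ref{lem.aux}) to bound $s(e^{\lambda_h^k})\le M_1(\alpha)\,e^{\lambda_h^k}(e^{\lambda_h^k}-1)\lambda_h^k$ --- valid uniformly on all of $[\alpha,\infty)$, including near $1$ and at infinity; on $\{e^{\lambda_h^k}\le\alpha\}$ it bounds $s$ by the constant $M_2(\alpha)$ and controls the measure of that set by the Poincar\'e term, using that $S_h^0<|\Omega|$ forces the mean of $e^{\lambda_h^k/2}$ to stay above a positive constant, so that on the small-$u$ set the deviation from the mean is bounded below. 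That is precisely the mechanism that excludes collapse onto the unstable steady state $u^*=0$, and it is the only place the hypothesis $S_h^0<|\Omega|$ is used. To repair your argument, swap the roles of the two regions: use the reaction term everywhere $u$ is bounded away from $0$, and reserve the Poincar\'e term plus the mass lower bound for the set where $u$ is small.
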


The proof is based on two properties: The diffusion drives the solution towards
a constant, while the reaction term guarantees that there is only one (positive)
steady state. In order to cope with the interplay of diffusion and reaction, we
prove first the following lemma.

\begin{lemma}\label{lem.aux}
Introduce for $\theta>0$ the functions
$$
  M_1(\theta) = \frac{s(\theta)}{\theta(\theta-1)\log\theta}, \quad
	M_2(\theta) = \max\{1,s(\theta)\}.
$$
Then
$$
  s(e^v) \le \left\{\begin{array}{ll}
	M_1(\theta)e ^v(e^v-1)v \quad & \mbox{if }v\ge\log\theta, \\
	M_2(\theta) \quad & \mbox{if }v<\log\theta.
	\end{array}\right.
$$
\end{lemma}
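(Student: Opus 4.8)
The plan is to establish the two claimed inequalities separately, exploiting the explicit form of $s(u)=u(\log u-1)+1$ and elementary monotonicity of the auxiliary ratios. Recall that $s(u)\ge 0$ for all $u\ge 0$, that $s(1)=0$, and that $u(u-1)\log u\ge 0$ for all $u>0$ (since $u-1$ and $\log u$ have the same sign), with equality only at $u=1$. These sign facts are the backbone of both cases.

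For the first case, $v\ge\log\theta$, equivalently $e^v\ge\theta$, I would set $u=e^v$ and rewrite the desired bound as $s(u)\le M_1(\theta)\,u(u-1)\log u$. Since $u>1$ here (assuming $\theta>1$; the case $\theta\le 1$ needs a separate remark, see below) both sides are nonnegative and $u(u-1)\log u>0$, so the inequality is equivalent to
\begin{equation*}
  \frac{s(u)}{u(u-1)\log u} \le M_1(\theta) = \frac{s(\theta)}{\theta(\theta-1)\log\theta}.
\end{equation*}
Thus it suffices to show that the function $g(u):=s(u)/\big(u(u-1)\log u\big)$ is nonincreasing on $(1,\infty)$, so that its supremum over $[\theta,\infty)$ is attained at $u=\theta$. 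I would verify monotonicity of $g$ either by differentiation (showing $g'\le 0$, reducing to an elementary one-variable inequality after clearing positive denominators) or, more cleanly, by writing $s(u)=\int_1^u\log t\,dt$ and $u(u-1)\log u$ in an integral form and comparing integrands; a ratio-of-integrals / Cauchy–Schwarz-type argument may shortcut the computation. This monotonicity check is the main obstacle: the derivative of $g$ is a messy rational-logarithmic expression, and some care is needed to confirm its sign on all of $(1,\infty)$ and to check the boundary behaviour as $u\to 1^+$ and $u\to\infty$ (where $g$ tends to a finite positive limit, namely $1/2$ at $u=1$).

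For the second case, $v<\log\theta$, I would split according to whether $e^v\le 1$ or $1<e^v<\theta$. If $e^v\le 1$ then $s(e^v)\le s(0)=1\le M_2(\theta)$, using that $s$ is decreasing on $[0,1]$ with $s(0)=1$. If $1<e^v<\theta$, then since $s$ is increasing on $[1,\infty)$ we have $s(e^v)\le s(\theta)\le\max\{1,s(\theta)\}=M_2(\theta)$. In both subcases the bound follows immediately, so this half is routine.

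Finally I would add a short remark covering $\theta\le 1$: in that regime $\log\theta\le 0$, the quantity $M_1(\theta)$ as written may be negative or undefined, but the condition $v\ge\log\theta$ together with the intended application (where $\theta$ is chosen large, $\theta>1$) means only the case $\theta>1$ is actually used; alternatively one notes the statement is vacuous or trivially reduces to the $v<\log\theta$ branch. I expect the paper only invokes the lemma with $\theta>1$, so I would state that assumption explicitly at the start of the proof to avoid these degeneracies, and then the two cases above complete the argument.
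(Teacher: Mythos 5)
Your second case is fine and matches the paper's argument. The problem is in the first case, and specifically in your decision to restrict to $\theta>1$. The paper applies this lemma in the proof of Proposition \ref{prop.decay} with the argument $\eps_0\theta$, where both $\eps_0$ and $\theta$ lie in $(0,1)$; so the lemma is genuinely needed for $\theta\in(0,1)$, where $\log\theta<0$ and the range $v\ge\log\theta$ contains all of $(\log\theta,0)$, i.e.\ $u=e^v\in(\theta,1)$. Your claim that for $\theta\le1$ the constant $M_1(\theta)$ ``may be negative or undefined'' is also incorrect: since $u-1$ and $\log u$ always have the same sign, $\theta(\theta-1)\log\theta>0$ for every $\theta>0$ with $\theta\ne1$, so $M_1(\theta)>0$ there (only $\theta=1$ is degenerate, a removable singularity with limit $1/2$). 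Consequently, the case you defer to a ``separate remark'' and then dismiss as vacuous is exactly the case the paper uses, and your proof as written does not establish it.

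The fix is what the paper does: work with $g(v)=s(e^v)/\big(e^v(e^v-1)v\big)=\big(e^v(v-1)+1\big)/\big(e^v(e^v-1)v\big)$ as a function of $v$ on all of $\R$, note that it extends continuously through $v=0$ with value $1/2$, and show it is decreasing on the whole line, with $g(v)\to+\infty$ as $v\to-\infty$ and $g(v)\to0$ as $v\to+\infty$. Then $g(v)\le g(\log\theta)=M_1(\theta)$ for all $v\ge\log\theta$, for every $\theta>0$. Your proposed monotonicity check (differentiation, or an integral-ratio comparison using $s(u)=\int_1^u\log t\,dt$) is the right kind of verification, but it must be carried out on $(-\infty,0)$ as well as on $(0,\infty)$, including continuity of the extension across $v=0$; otherwise the chain of inequalities breaks precisely where the proposition needs it.
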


\begin{proof}
The function
$$
  g(v) = \frac{s(e^v)}{e^v(e^v-1)v} = \frac{e^v(v-1)+1}{e^v(e^v-1)v}, \quad v\neq 0,
$$
can be continuously extended to $v=0$ (with value $g(0)=1/2$) and it is 
decreasing with limits $\lim_{v\to\infty}g(v)=0$ and
$\lim_{v\to-\infty}g(v)=+\infty$. Therefore, $g(v)\le g(\log\theta) = M_1(\theta)$
for all $v\ge\log\theta$, showing the first inequality. For the second one,
let $v\le\log\theta$. Then $s(e^v)<1$ for $v\le 0$ and the monotonicity of $v\mapsto
s(e^v)$ for $v\ge 0$ implies that $s(e^v)\le s(\theta)$. Thus, for any $v\in\R$,
$s(e^v)\le\max\{1,s(\theta)\}=M_2(\theta)$, completing the proof.
\end{proof}

\begin{proof}[Proof of Proposition \ref{prop.decay}]
The idea of the proof is to split $S^k_h$ into two integrals,
\begin{equation}\label{2.twoint}
  S_h^k = \int_{\{\lambda_h^k\le\log\alpha\}}s(e^{\lambda_h^k})dx
	+ \int_{\{\lambda_h^k>\log\alpha\}}s(e^{\lambda_h^k})dx,
\end{equation}
for some suitably chosen $\alpha>0$ and to estimate these integrals by
the second and third terms on the left-hand side of the discrete entropy 
inequality \eqref{2.epi}.

Since $S_h^0/|\Omega|<1$, there exists $\theta\in(0,1)$ such that
$s(\theta)>S_h^0/|\Omega|$. Let
$0<\eps_0<[1-S_h^0/(|\Omega|s(\theta))]^2$ and set $\alpha=\eps_0\theta\in(0,1)$.

We turn to the first integral on the right-hand side of \eqref{2.twoint}.
We claim that there exists a constant $C_{\eps_0 \theta}>0$ such that
\begin{equation}\label{2.claim}
  \int_{\{\lambda_h^k\le\log\alpha\}}s(e^{\lambda_h^k})dx
	\le C_{\eps_0 \theta}\int_\Omega\int_\Omega\bigg|e^{\lambda_h^k/2}
	- \frac{1}{|\Omega|}\int_\Omega e^{\lambda_h^k/2} dy\bigg|^2 dx.
\end{equation}
To prove this inequality, we begin by showing that $\int_\Omega \exp(\lambda_h^k/2)dx$
is bounded from below. Indeed, using the monotonicity of $s\circ\exp$ in $[0,1]$
and of $k\mapsto S^k$,
\begin{align*}
  |\{\lambda_h^k\le\log\theta\}|
	&= |\{s(e^{\lambda_h^k})\ge s(\theta)\}|
	= \frac{1}{s(\theta)}\int_{\{s(\exp(\lambda_h^k))\ge s(\theta)\}}s(\theta)dx \\
	&\le \frac{1}{s(\theta)}\int_{\{s(\exp(\lambda_h^k))\ge s(\theta)\}}
	s(e^{\lambda_h^k})dx
	\le \frac{1}{s(\theta)}\int_\Omega s(e^{\lambda_h^k})dx 
	= \frac{S^k_h}{s(\theta)} \le \frac{S^0_h}{s(\theta)}.
\end{align*}
This yields the lower bound
\begin{align*}
  \int_\Omega e^{\lambda_h^k/2}dx
	&\ge \int_{\{\lambda_h^k>\log\theta\}}e^{\lambda_h^k/2}dx
	> \sqrt{\theta}|\{\lambda_h^k>\log\theta\}| \\
	&= \sqrt{\theta}\big(|\Omega| - |\{\lambda_h^k\le\log\theta\}|\big)
	\ge \sqrt{\theta}\bigg(|\Omega| - \frac{S_h^0}{s(\theta)}\bigg).
\end{align*}
Therefore, as long as $\lambda_h^k\le\log(\eps_0 \theta)$, the difference
$$
  \frac{1}{|\Omega|}\int_\Omega e^{\lambda_h^k/2}dx - e^{\lambda_h^k/2}
	\ge \frac{1}{|\Omega|}\int_\Omega e^{\lambda_h^k/2}dx - \sqrt{\eps_0 \theta}
	\ge \sqrt{\theta}\bigg(1 - \frac{S_h^0}{|\Omega|s(\theta)} 
	- \sqrt{\eps_0 }\bigg) > 0
$$
is positive. Squaring this expression and integrating over 
$\{\lambda_h^k\le\log(\eps_0 \theta)\}$ thus does not change the inequality sign:
\begin{align*}
  \int_{\{\lambda_h^k\le\log(\eps_0 \theta)\}}
	\bigg|e^{\lambda_h^k/2} - \frac{1}{|\Omega|}\int_\Omega e^{\lambda_h^k/2}dx\bigg|^2 dx
	&\ge \int_{\{\lambda_h^k\le\log(\eps_0 \theta)\}}\theta
	\bigg(1 - \frac{S_h^0}{|\Omega|s(\theta)} - \sqrt{\eps_0 }\bigg)^2 dx \\
	&= \big|\{\lambda_h^k\le\log(\eps_0 \theta)\}\big|\theta
	\bigg(1 - \frac{S_h^0}{|\Omega|s(\theta)} - \sqrt{\eps_0 }\bigg)^2.
\end{align*}
Combining the estimate of Lemma \ref{lem.aux} and the previous estimate, we arrive at
\begin{align*}
  \int_{\{\lambda_h^k\le\log(\eps_0 \theta)\}}s(e^{\lambda_h^k})dx
	&\le M_2(\eps_0 \theta)\big|\{\lambda_h^k\le\log(\eps_0 \theta)\}\big| \\
	&\le \frac{M_2(\eps_0 \theta)}{(1-S_h^0/(|\Omega|s(\theta)) - \sqrt{\eps_0 })\theta}
	\int_\Omega\bigg|e^{\lambda_h^k/2} 
	- \frac{1}{|\Omega|}\int_\Omega e^{\lambda_h^k/2}dx\bigg|^2 dx.
\end{align*}
This proves claim \eqref{2.claim} with 
$$
  C_{\eps_0 \theta} = \frac{M_2(\eps_0 \theta)}{(1-S_h^0/(|\Omega|s(\theta)) 
	- \sqrt{\eps_0 })\theta},
$$
recalling that $\alpha=\eps_0 \theta$.

Next, we estimate the second integral on the right-hand side of \eqref{2.twoint}.
It follows from Lemma \ref{lem.aux} that
$$
  \int_{\{\lambda_h^k > \log(\eps_0 \theta)\}}s(e^{\lambda_h^k})dx
	\le M_1(\eps_0 \theta)\int_\Omega e^{\lambda_h^k}(e^{\lambda_h^k}-1)\lambda_h^k dx.
$$
Therefore, \eqref{2.twoint} gives
\begin{align*}
  S_h^k &\le C_{\eps_0 \theta}\int_\Omega\bigg|e^{\lambda_h^k/2} 
	- \frac{1}{|\Omega|}\int_\Omega e^{\lambda_h^k/2}dx\bigg|^2 dx
	+ M_1(\eps_0 \theta)\int_\Omega e^{\lambda_h^k}(e^{\lambda_h^k}-1)\lambda_h^k dx \\
	&\le \frac{1}{C_1}\bigg(C_0\int_\Omega\bigg|e^{\lambda_h^k/2} 
	- \frac{1}{|\Omega|}\int_\Omega e^{\lambda_h^k/2}dx\bigg|^2 dx
	+ \int_\Omega e^{\lambda_h^k}(e^{\lambda_h^k}-1)\lambda_h^k dx\bigg)
\end{align*}
for $C_1 = 1/\max\{C_{\eps_0 \theta}/C_0,M_1(\eps_0 \theta)\}$.
Finally, by Lemma \ref{lem.epi},
$$
  S_h^k \le \frac{1}{C_1\triangle t}(S_h^{k-1}-S_h^{k}),
$$
and solving this recursion shows the proposition.
\end{proof}

\begin{theorem}[Decay in the $L^1(\Omega)$ norm]\label{thm.decay}\ \
Let the assumptions of Proposition \ref{prop.decay} hold. Then
there exists a constant $C_2>0$, only depending on $S_h^0$ and $|\Omega|$, such that
$$
  \|e^{\lambda_h^k}-1\|_{L^1(\Omega)} \le C_2e^{-\eta C_1 k\triangle t/2}, 
	\quad k\in\N,
$$
where $\eta\in(0,1)$ and $C_1>0$ are as in Proposition \ref{prop.decay}. 
\end{theorem}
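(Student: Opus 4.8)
The plan is to bound the $L^1$ deviation $\|e^{\lambda_h^k}-1\|_{L^1(\Omega)}$ by a combination of a term controlled by the entropy $S_h^k$ and a term controlled by the deviation of $e^{\lambda_h^k}$ from its mean, and then to invoke Proposition \ref{prop.decay} together with Lemma \ref{lem.mass}. Concretely, I would split $\Omega=\{\lambda_h^k\le 0\}\cup\{\lambda_h^k>0\}$, i.e.\ the region where $e^{\lambda_h^k}\le 1$ and the region where $e^{\lambda_h^k}>1$. On each region, the elementary convexity inequality $s(u)=u(\log u-1)+1\ge c\,(u-1)^2$ for $u$ in a bounded interval away from the degenerate regime (which holds because, by Lemma \ref{lem.mass}, the total mass is bounded above by $\sigma_+(S_h^0/|\Omega|)$, giving an a priori upper bound on $e^{\lambda_h^k}$ on average, and on the set $\{\lambda_h^k\le 0\}$ we trivially have $e^{\lambda_h^k}\le 1$) lets me dominate $(e^{\lambda_h^k}-1)^2$ pointwise by $C\,s(e^{\lambda_h^k})$, hence by $C\,S_h^k$ after integration. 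Then Cauchy--Schwarz turns the $L^2$ bound into an $L^1$ bound: $\|e^{\lambda_h^k}-1\|_{L^1(\Omega)}\le |\Omega|^{1/2}\|e^{\lambda_h^k}-1\|_{L^2(\Omega)}\le C|\Omega|^{1/2}(S_h^k)^{1/2}$.

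The remaining issue is the region where $e^{\lambda_h^k}$ is large, where $s(u)$ grows like $u\log u$ and the quadratic lower bound $s(u)\ge c(u-1)^2$ fails with a uniform constant. Here I would instead use the pointwise inequality $|u-1|\le C_{S_h^0}\, s(u)$ valid for $u\ge 1$ up to the a priori upper bound $u\le|\Omega|\,\sigma_+(S_h^0/|\Omega|)$ that follows from... actually the cleanest route avoids any pointwise mass bound: on $\{e^{\lambda_h^k}>1\}$ one has $u-1\le u\log u \le \tfrac1{\log\theta'}\,u(\log u)$ is not quite $s$, so better to note $s(u)\ge u-1$ for all $u\ge 0$ is false near $u=1$; the correct elementary bound is that for $u\ge 2$, $s(u)\ge \tfrac12(u-1)$, while for $1\le u\le 2$ the quadratic bound $s(u)\ge c(u-1)^2\ge \tfrac{c}{1}(u-1)^2$ combined with $|u-1|\le1$ again reduces to an $L^2$ estimate. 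Thus on $\{e^{\lambda_h^k}\ge 2\}$ we get $\int (e^{\lambda_h^k}-1)\,dx\le 2\int s(e^{\lambda_h^k})\,dx\le 2S_h^k$ directly, and on $\{1\le e^{\lambda_h^k}<2\}\cup\{e^{\lambda_h^k}\le1\}$ we use the $L^2$-to-$L^1$ argument of the previous paragraph. Combining, $\|e^{\lambda_h^k}-1\|_{L^1(\Omega)}\le 2S_h^k + C|\Omega|^{1/2}(S_h^k)^{1/2}$.

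Finally I would apply Proposition \ref{prop.decay}, which gives $S_h^k\le S_h^0 e^{-\eta C_1 k\triangle t}$ with $\eta\in(0,1)$. Since $S_h^k\le S_h^0<|\Omega|$ is bounded, the linear term $2S_h^k$ is dominated (up to a constant) by $(S_h^k)^{1/2}$, whence
\[
  \|e^{\lambda_h^k}-1\|_{L^1(\Omega)} \le C_2\, (S_h^k)^{1/2}
	\le C_2\, (S_h^0)^{1/2}\, e^{-\eta C_1 k\triangle t/2},
\]
with $C_2$ depending only on $|\Omega|$ and $S_h^0$ (through the constant in the convexity estimate $s(u)\ge c(u-1)^2$, which in turn depends on the bounded range of $u$-values that matters, i.e.\ only on an upper threshold such as $2$). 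This is exactly the claimed estimate after absorbing $(S_h^0)^{1/2}$ into $C_2$.

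The main obstacle I anticipate is not any deep estimate but rather the careful bookkeeping of the convexity inequality for $s$ in the two regimes $u$ near $1$ (quadratic behaviour) versus $u$ large (superlinear behaviour), making sure the constants depend only on $S_h^0$ and $|\Omega|$ and not on $h$, $k$, or $\triangle t$; in particular one must avoid any spurious dependence on a pointwise upper bound for $e^{\lambda_h^k}$, which is why splitting at the fixed level $u=2$ (rather than at a mass-dependent level) is the technically convenient choice.
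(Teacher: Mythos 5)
Your argument is correct in substance but follows a genuinely different route from the paper. The paper writes $u=e^{\lambda_h^k}$, introduces the mean $\bar u=|\Omega|^{-1}\int_\Omega u\,dx$, controls $\|u-\bar u\|_{L^1(\Omega)}$ by the Csisz\'ar--Kullback inequality (which requires the upper bound $\bar u\le\sigma_+(S_h^0/|\Omega|)$ from Lemma \ref{lem.mass}), and then separately bounds $|\bar u-1|$ via Jensen's inequality, $s(\bar u)\le S_h^k/|\Omega|<1$ (hence $\bar u<e$), and the elementary estimate $s(v)\ge (v-1)^2/(e-1)^2$ on $[0,e]$. You bypass the mean entirely and compare $|u-1|$ with $s(u)$ pointwise in two regimes: quadratically for $u$ below a fixed threshold (then Cauchy--Schwarz converts $L^2$ to $L^1$), and linearly for $u$ above it. Your version is more elementary --- it needs neither Csisz\'ar--Kullback nor Lemma \ref{lem.mass} --- and your observation that the splitting threshold should be a fixed constant rather than mass-dependent is exactly the right bookkeeping point; both proofs end by absorbing $S_h^k\le (S_h^0)^{1/2}(S_h^k)^{1/2}$ and invoking Proposition \ref{prop.decay}. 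One small correction: the inequality $s(u)\ge\tfrac12(u-1)$ for $u\ge 2$ is false at $u=2$, since $s(2)=2\log 2-1\approx 0.386<\tfrac12$. Because $u\mapsto s(u)/(u-1)$ is increasing on $(1,\infty)$ (its derivative has numerator $u-1-\log u>0$), the correct uniform constant on $[2,\infty)$ is $2\log 2-1$, and with this replacement your estimate $\int_{\{u\ge 2\}}(u-1)\,dx\le (2\log2-1)^{-1}S_h^k$ and the rest of the argument go through unchanged.
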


\begin{proof}
To simplify the notation, we set $u=e^{\lambda^k_h}$ and 
$\bar u=|\Omega|^{-1}\int_\Omega e^{\lambda_h^k}dx$. Then 
the Csisz\'ar-Kullback inequality (see, e.g., \cite[(2.8)]{AMTU00}) gives
$$
  \|u-\bar u\|_{L^1(\Omega)}^2
	\le \frac{2}{|\Omega|}\int_\Omega s\bigg(\frac{u}{\bar u}\bigg)\bar u dx
	= \frac{2\bar u}{|\Omega|}\int_\Omega\big(s(u)-s(\bar u)\big)dx
  \le \frac{2\bar u}{|\Omega|}\int_\Omega s(u)dx,
$$
using the property $s(u)\ge 0$ for all $u\ge 0$. We know from Lemma \ref{lem.mass}
that $\bar u$ is bounded from above by $\sigma_+(S_h^0/|\Omega|)$. Hence,
\begin{equation}\label{2.diff}
  \|u-\bar u\|_{L^1(\Omega)}^2 \le \frac{2}{|\Omega|}
	\sigma_+\bigg(\frac{S_h^0}{|\Omega|}\bigg)S_h^k.
\end{equation}

It remains to show that a similar estimate holds for $|\bar u-1|$.
Since the entropy density $s$ is convex, Jensen's inequality shows that
\begin{equation}\label{2.baru}
  s(\bar u) = s\bigg(\frac{1}{|\Omega|}\int_\Omega e^{\lambda_h^k}dx\bigg)
	\le \frac{1}{|\Omega|}\int_\Omega s(e^{\lambda_h^k})dx = \frac{S_h^k}{|\Omega|}
	\le \frac{S_h^0}{|\Omega|} < 1.
\end{equation}
It holds $s(v)<1$ if and only if $v<e$. Consequently, we have $\bar u<e$.
Applying the elementary inequality
$$
  s(u) \ge \frac{(u-1)^2}{(e-1)^2}\quad\mbox{for all }0\le u\le e
$$
to $u=\bar u$ and using \eqref{2.baru} gives
$$
  |\bar u-1|^2 \le (e-1)^2s(\bar u)\le \frac{(e-1)^2}{|\Omega|}S_h^k.
$$
Thus, combining \eqref{2.diff} and the previous inequality, we conclude that
\begin{align*}
  \|e^{\lambda_h^k}-1\|_{L^1(\Omega)}
	&\le \|u-\bar u\|_{L^1(\Omega)} + \|\bar u-1\|_{L^1(\Omega)} \\
	&\le \bigg\{\bigg(\frac{2}{|\Omega|}\sigma_+\bigg(\frac{S_h^0}{|\Omega|}\bigg)
	\bigg)^{1/2} + (e-1)|\Omega|^{1/2}\bigg\}(S_h^k)^{1/2},
\end{align*}
and the proof follows after applying Proposition \ref{prop.decay}.
\end{proof}

\begin{remark}\label{rem.S0}\rm
We discuss the case $S_h^0\ge|\Omega|$. Fix $\triangle t\in(0,1)$ and $L\in\N$
with $L>1$. Define $\lambda_h^k = (L-k)^+\log(1-\triangle t)$, where 
$z^+=\max\{0,z\}$ denotes the positive part of $z\in\R$. Then
$e^{\lambda_h^k}=(1-\triangle t)^{L-k}<1$ for $k<L$ and $e^{\lambda_h^k}=1$
for $k\ge L$. Consider the case $L>k=1$. Then, setting 
$\delta:=(1-\triangle t)^{L-k}$, we estimate
\begin{align*}
  \frac{1}{\triangle t}& S_h^1
	+ C_0\int_\Omega\bigg|e^{\lambda_h^1/2}-\frac{1}{|\Omega|}\int_\Omega 
	e^{\lambda_h^1/2}dx\bigg|^2 dx + \int_\Omega e^{\lambda^1}(e^{\lambda^1}-1)
	\lambda_h^1 dx \\
	&= \bigg(\frac{s(\delta)}{\triangle t} + \delta(\delta-1)\log\delta\bigg)|\Omega| 
	\le (1 + (1-\triangle t)\delta\log \delta)\frac{|\Omega|}{\triangle t}
	\le \frac{|\Omega|}{\triangle t} \le \frac{S_h^0}{\triangle t}.
\end{align*}
If $1<k\le L$, we deduce from $e^{\lambda_h^k}\le 1$ that
\begin{align}
  \frac{1}{\triangle t}(e^{\lambda_h^k}-e^{\lambda_h^{k-1}})
	&= \frac{1}{\triangle t}\big((1-\triangle t)^{L-k} - (1-\triangle t)^{L-k+1}\big)
	= (1-\triangle t)^{L-k} \nonumber \\
	&= e^{\lambda_h^k} \ge -e^{\lambda_h^k}(e^{\lambda_h^k}-1). \label{2.aux6}
\end{align}
By the convexity of $s$, it follows that $s(u)-s(v)\le (u-v)s'(u) = (u-v)\log u$ for 
all $u$, $v>0$. Since $\lambda_h^k\le 0$ for $k\le L$, \eqref{2.aux6} yields
$$
  s(e^{\lambda_h^k}) \le (e^{\lambda_h^k}-e^{\lambda_h^{k-1}})\lambda_h^k
	+ s(e^{\lambda_h^{k-1}}) \le -\triangle t e^{\lambda_h^k}(e^{\lambda_h^k}-1)
	\lambda_h^k + s(e^{\lambda_h^{k-1}}),
$$
which directly implies the entropy inequality \eqref{2.epi}.
This inequality is trivially satisfied for $k\ge L$. However, it holds 
for $L=2k$ that
$$
  e^{\lambda_h^k} = (1-\triangle t)^k\to 0, \quad
	S_h^k = \int_\Omega s(e^{\lambda_h^k})dx\to |\Omega|\quad\mbox{as }k\to\infty.
$$
This means that if $S_h^0\ge|\Omega|$, there exists no constant $C>0$ depending only
on $S_h^0$ such that \eqref{2.decay} holds for all $(\lambda_h^k)\subset L^2(\Omega)$
satisfying the entropy inequality \eqref{2.epi}. Note that the
constructed function $e^{\lambda_h^k}$ does not possess a uniform 
positive lower bound.
\qed
\end{remark}


\section{Analysis of the DG scheme: numerical convergence}\label{sec.conv}

We show first that the solutions to \eqref{dg} are uniformly bounded in the
DG norm \eqref{dgnorm} if the initial entropy $S_h^0$ is bounded uniformly in $h$.

\begin{lemma}[Uniform bound in DG norm]\label{lem.dgnorm}
Let $\eps\ge 0$ and let $\lambda_h^k$ be a solution to \eqref{dg}.
Then there exists a constant $C>0$ such that
$$
  \triangle t\|e^{\lambda_h^k/2}\|_{\rm DG}^2
	\le 2\triangle t|\Omega| + \max\bigg\{\frac{1}{2\min\{1,C_{\rm inv}^2\}},\triangle t
	\bigg\}S_h^0.
$$
\end{lemma}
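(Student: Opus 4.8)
The plan is to test the DG scheme \eqref{dg} with $\phi_h=\lambda_h^k$ exactly as in the proof of the discrete entropy inequality (Lemma \ref{lem.epi}), but now to retain the full coercivity lower bound for $B$ from Lemma \ref{lem.coerc} rather than immediately discarding it via the Poincar\'e--Wirtinger inequality. Concretely, starting from
\begin{equation*}
  S_h^k - S_h^{k-1} + \triangle t\, B(\lambda_h^k;\lambda_h^k,\lambda_h^k)
  + \eps\triangle t\int_\Omega(\lambda_h^k)^2\,dx
  + \triangle t\int_\Omega e^{\lambda_h^k}(e^{\lambda_h^k}-1)\,dx = 0,
\end{equation*}
I would use convexity inequality \eqref{2.s} (which gave $S_h^k-S_h^{k-1}\le -\triangle t\,B(\lambda_h^k;\lambda_h^k,\lambda_h^k)-\eps\triangle t\|\lambda_h^k\|_{L^2}^2-\triangle t\int_\Omega e^{\lambda_h^k}(e^{\lambda_h^k}-1)\lambda_h^k\,dx$) together with Lemma \ref{lem.coerc} to obtain
\begin{equation*}
  2\triangle t\min\{1,C_{\rm inv}^2\}\bigg(\sum_{K\in\T_h}\|\na e^{\lambda_h^k/2}\|_{L^2(K)}^2
  + \sum_{f\in\E_h}\int_f\frac{p^2}{{\tt h}}|\jump{e^{\lambda_h^k/2}}|^2\,ds\bigg)
  \le S_h^{k-1} - S_h^k - \triangle t\int_\Omega e^{\lambda_h^k}(e^{\lambda_h^k}-1)\lambda_h^k\,dx.
\end{equation*}
The reaction term $\int_\Omega e^{\lambda_h^k}(e^{\lambda_h^k}-1)\lambda_h^k\,dx$ is nonnegative since $(e^v-1)v\ge 0$ for all $v\in\R$, so it can simply be dropped from the right-hand side (it helps us). What remains is $S_h^{k-1}-S_h^k\le S_h^{k-1}\le\dots\le S_h^0$ by the monotonicity of $k\mapsto S_h^k$ from Lemma \ref{lem.epi}.

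Next I would account for the $L^2(\Omega)$-part of the DG norm. Since $e^{\lambda_h^k}\ge 0$ and $s(u)=u(\log u-1)+1\ge 0$, one has the elementary pointwise bound $u\le s(u)+2$ for all $u\ge 0$ (because $s(u)-u+2=u\log u-2u+3\ge 0$; the minimum of $u\log u-2u$ is $-e$, so the expression is $\ge 3-e>0$). Hence
\begin{equation*}
  \|e^{\lambda_h^k/2}\|_{L^2(\Omega)}^2 = \int_\Omega e^{\lambda_h^k}\,dx
  \le \int_\Omega\big(s(e^{\lambda_h^k})+2\big)\,dx = S_h^k + 2|\Omega| \le S_h^0 + 2|\Omega|.
\end{equation*}
Adding this to the gradient-and-jump estimate above, after dividing the latter by $2\min\{1,C_{\rm inv}^2\}$, gives
\begin{equation*}
  \triangle t\|e^{\lambda_h^k/2}\|_{\rm DG}^2
  \le \triangle t\big(S_h^0 + 2|\Omega|\big) + \frac{1}{2\min\{1,C_{\rm inv}^2\}}S_h^0,
\end{equation*}
and bounding the two $S_h^0$-coefficients by their maximum yields exactly the claimed inequality $\triangle t\|e^{\lambda_h^k/2}\|_{\rm DG}^2\le 2\triangle t|\Omega|+\max\{1/(2\min\{1,C_{\rm inv}^2\}),\triangle t\}S_h^0$.

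I do not expect any serious obstacle here: the argument is essentially the entropy inequality of Lemma \ref{lem.epi} with the coercivity bound of Lemma \ref{lem.coerc} kept in its sharp form and telescoped over $k$. The only points requiring a little care are (i) confirming the sign of the reaction contribution — that $(e^{\lambda_h^k}-1)\lambda_h^k\ge 0$ pointwise, so it may be discarded — and (ii) the elementary estimate converting the $L^1$ mass bound on $e^{\lambda_h^k}$ into control of the $L^2(\Omega)$ term of the DG norm via $u\le s(u)+\mathrm{const}$. Both are routine. The role of the hypothesis is minimal: one only needs $S_h^0<\infty$ (uniformly in $h$), not the sharper condition $S_h^0<|\Omega|$ used for the decay results, so the bound holds for every $\eps\ge 0$ and every admissible initial datum.
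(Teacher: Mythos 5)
Your argument follows the paper's route almost verbatim --- test \eqref{dg} with $\phi_h=\lambda_h^k$, use the convexity inequality \eqref{2.s}, keep the coercivity bound of Lemma \ref{lem.coerc} in its sharp form, drop the nonnegative reaction term, and control the $L^2$ part of the DG norm via $u\le s(u)+2$ --- and each individual step is correct. The one flaw is the final bookkeeping: from
$$
  \triangle t\|e^{\lambda_h^k/2}\|_{\rm DG}^2
  \le \triangle t\big(S_h^0+2|\Omega|\big)+\frac{S_h^0}{2\min\{1,C_{\rm inv}^2\}}
$$
you cannot conclude the stated inequality, because the coefficient of $S_h^0$ you obtain is the \emph{sum} $\triangle t+1/(2\min\{1,C_{\rm inv}^2\})$, which is strictly larger than the $\max$ appearing in the lemma; ``bounding the two coefficients by their maximum'' turns a sum $a+b$ into $2\max\{a,b\}$, not $\max\{a,b\}$. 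The loss happens because you discard the $-S_h^k$ in the telescoped entropy difference ($S_h^{k-1}-S_h^k\le S_h^0$) and, independently, bound the mass term by $S_h^k+2|\Omega|\le S_h^0+2|\Omega|$. The paper instead keeps both occurrences of $S_h^k$ together, arriving at $2\triangle t|\Omega|+\big(\triangle t-\tfrac{1}{2\min\{1,C_{\rm inv}^2\}}\big)S_h^k+\tfrac{1}{2\min\{1,C_{\rm inv}^2\}}S_h^{k-1}$, and then splits into cases according to the sign of $\triangle t-\tfrac{1}{2\min\{1,C_{\rm inv}^2\}}$: if it is nonpositive the term is dropped, otherwise $S_h^k\le S_h^0$ is applied; this cancellation is precisely what produces the $\max$. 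Your weaker bound (larger by at most a factor $2$ in the $S_h^0$ coefficient) would still suffice for the only use of this lemma, namely the uniform DG bound feeding the compactness argument in Theorem \ref{thm.conv}, but as a proof of the lemma as stated it does not deliver the claimed constant. Everything else --- the sign of $e^{\lambda_h^k}(e^{\lambda_h^k}-1)\lambda_h^k$, the elementary inequality $u\le s(u)+2$, and your remark that only finiteness of $S_h^0$ (not $S_h^0<|\Omega|$) is needed --- is correct and consistent with the paper.
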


\begin{proof}
We have shown in the proof of Lemma \ref{lem.epi} that
$$
  S_h^k + 2\triangle t\min\{1,C_{\rm inv}^2\}
	\bigg(\sum_{K\in\T_h}\int_K|\na e^{\lambda_h^k/2}|^2 dx
	+ \sum_{f\in\E_h}\int_f\frac{p^2}{{\tt h}}|\jump{e^{\lambda_h^k/2}}|^2 ds\bigg)
  \le S_h^{k-1}.
$$
Then, by definition of the DG norm,
$$
  \triangle t\|e^{\lambda_h^k/2}\|_{\rm DG}^2
	\le \triangle t\int_\Omega e^{\lambda_h^k}dx
	+ \frac{1}{2\min\{1,C_{\rm inv}^2\}}\int_\Omega\big(s(e^{\lambda_h^{k-1}})
	- s(e^{\lambda_h^k})\big)dx
$$
Using the inequality $u\le 2+s(u)$ for $u\ge 0$, applied to $u=e^{\lambda_h^k}$,
and the monotonicity of $k\mapsto S_h^k$, we find that
\begin{align*}
  \triangle t\|e^{\lambda_h^k/2}\|_{\rm DG}^2
	&\le \triangle t\int_\Omega(2 + s(e^{\lambda_h^k}))dx
	+ \frac{1}{2\min\{1,C_{\rm inv}^2\}}\int_\Omega\big(s(e^{\lambda_h^{k-1}})
	- s(e^{\lambda_h^k})\big)dx \\
	&= 2\triangle t|\Omega| + \bigg(\triangle t - \frac{1}{2\min\{1,C_{\rm inv}^2\}}\bigg)
	S_h^k + \frac{S_h^{k-1}}{2\min\{1,C_{\rm inv}^2\}} \\
  &\le 2\triangle t|\Omega| + \bigg(\triangle t - \frac{1}{2\min\{1,C_{\rm inv}^2\}}\bigg)
	S_h^k + \frac{S_h^{0}}{2\min\{1,C_{\rm inv}^2\}}.
\end{align*}
If $2\min\{1,C_{\rm inv}^2\}\triangle t\le 1$ then
$$
  \triangle t\|e^{\lambda_h^k/2}\|_{\rm DG}^2
	\le 2\triangle t|\Omega| + \frac{S_h^{0}}{2\min\{1,C_{\rm inv}^2\}}.
$$
On the other hand, if $2\min\{1,C_{\rm inv}^2\}\triangle t > 1$, we have, again
by the monotonicity of $k\mapsto S_h^k$, 
$$
  \bigg(\triangle t - \frac{1}{2\min\{1,C_{\rm inv}^2\}}\bigg)S_h^k
	\le \bigg(\triangle t - \frac{1}{2\min\{1,C_{\rm inv}^2\}}\bigg)S_h^0,
$$
such that in either case,
$$
  \triangle t\|e^{\lambda_h^k/2}\|_{\rm DG}^2
	\le 2\triangle t|\Omega| + \max\bigg\{\frac{1}{2\min\{1,C_{\rm inv}^2\}},\triangle t
	\bigg\}S_0^h,
$$
proving the lemma.
\end{proof}

\begin{theorem}[Convergence]\label{thm.conv}
Let $\eps\ge 0$, $\triangle t\in(0,1)$, and let $\lambda_h^k$ 
be a solution to \eqref{dg}.
Assume that $\lambda_h^{k-1}\in V_h$ such that
$e^{\lambda_h^{k-1}}\to u^{k-1}$ strongly in $L^2(\Omega)$ as $(\eps,h)\to 0$. 
Then there exists a unique strong
solution $u^k\in H^2_n(\Omega)$ to 
\begin{equation}\label{2.uk}
  \frac{1}{\triangle t}(u^k-u^{k-1}) = \Delta u^k + u^k(1-u^k)\quad\mbox{in }\Omega,
	\quad \na u^k\cdot n=0\quad\mbox{on }\pa\Omega
\end{equation}
such that
$$
  e^{\lambda_h^k}\to u^k \quad\mbox{strongly in }L^2(\Omega)
	\mbox{ as }(\eps,h)\to 0.
$$
\end{theorem}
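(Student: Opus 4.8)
The plan is to pass to the limit $(\eps,h)\to 0$ in the DG scheme \eqref{dg} using the uniform bounds already at our disposal, identify the limit as a very weak solution, and then upgrade its regularity by a duality (transposition) argument. First I would extract compactness: Lemma \ref{lem.dgnorm} gives a uniform bound $\|e^{\lambda_h^k/2}\|_{\rm DG}\le C$, so by the DG compact embedding (Lemma \ref{lem.comp}) there is a subsequence along which $e^{\lambda_h^k/2}\to w$ strongly in $L^q(\Omega)$ for some $q>2$ and weakly in $H^1(\Omega)$, with the jump terms $\int_f (p^2/{\tt h})|\jump{e^{\lambda_h^k/2}}|^2 ds$ tending to $0$; setting $u^k:=w^2$ we get $e^{\lambda_h^k}\to u^k$ strongly in $L^2(\Omega)$ and $u^k\in W^{1,1}(\Omega)$ at least, with $u^k\ge 0$. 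The lower mass bound from Lemma \ref{lem.mass} (under $S_h^0<|\Omega|$; in the general case one still has the entropy/DG bounds which suffice for this passage) and the entropy gradient control are what make these terms converge.

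Next I would pass to the limit in the weak formulation tested against a fixed smooth $\phi\in C^\infty(\overline\Omega)$ (or $\phi\in H^2_n(\Omega)$), after rewriting the interior-penalty form $B(\lambda_h^k;\lambda_h^k,\phi)$ in a consistent, integration-by-parts form so that the diffusion term becomes $-\int_\Omega u^k\,\Delta\phi\,dx$ plus boundary-penalty contributions that vanish with $h$. Here the key point is that $e^{\lambda_h^k}\na\lambda_h^k=2e^{\lambda_h^k/2}\na e^{\lambda_h^k/2}$ is bounded in $L^1(\Omega)$ (product of the $L^q$-strongly-convergent factor and the weakly-$L^2$-convergent gradient), and the jump/average and penalty terms are controlled by the Cauchy–Schwarz estimates used in Lemma \ref{lem.coerc} times the vanishing seminorm, so they drop out. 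The reaction term $e^{\lambda_h^k}(1-e^{\lambda_h^k})\to u^k(1-u^k)$ in $L^1(\Omega)$ by the strong $L^2$ convergence, and the $\eps$-term vanishes since $\eps\to 0$ (with $\|\lambda_h^k\|_{L^2}$ controlled or, for $p=1$, $\eps=0$). This yields the very weak identity
\begin{equation*}
  \frac{1}{\triangle t}\int_\Omega(u^k-u^{k-1})\phi\,dx
  = \int_\Omega u^k\,\Delta\phi\,dx + \int_\Omega u^k(1-u^k)\phi\,dx
\end{equation*}
for all $\phi\in H^2_n(\Omega)$, i.e.\ $u^k$ solves \eqref{2.uk} in the sense of transposition.

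The main obstacle, and the last step, is the regularity bootstrap: a priori $u^k$ is only an $L^2$ (very weak) solution, and one must show $u^k\in H^2_n(\Omega)$ so that \eqref{2.uk} holds strongly and, in particular, that $u^k(1-u^k)\in L^2(\Omega)$ makes the elliptic problem $-\Delta u^k + (\triangle t)^{-1}u^k = f$ well-posed. I would argue as follows: the very weak solution of $(I/\triangle t - \Delta)u^k = u^{k-1}/\triangle t + u^k(1-u^k)$ with Neumann data is unique (test the difference of two solutions against the solution of the dual problem, using elliptic regularity for the Neumann Laplacian on the convex/smooth domain $\Omega$), and a very weak solution with $L^2$ right-hand side coincides with the unique $H^2_n$ strong solution given by elliptic regularity — here one first needs $u^k\in L^4(\Omega)$ so that $u^k(1-u^k)\in L^2(\Omega)$; this comes for free in $d=1,2$ from $q^*=\infty$ in Lemma \ref{lem.comp}, and in $d=3$ from $q^*=4$, giving $u^k=w^2\in L^2$ with $w\in L^4$, hence a mild iteration ($u^k\in L^2\Rightarrow$ RHS in $L^2\Rightarrow u^k\in H^2\hookrightarrow L^6$, etc.) closes the argument. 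Uniqueness of the strong solution then follows from monotonicity: if $u_1^k,u_2^k$ both solve \eqref{2.uk}, testing the difference equation with $u_1^k-u_2^k$ gives $(\triangle t)^{-1}\|u_1^k-u_2^k\|_{L^2}^2 + \|\na(u_1^k-u_2^k)\|_{L^2}^2 = \int_\Omega(u_1^k-u_2^k)^2 dx - \int_\Omega(u_1^k+u_2^k)(u_1^k-u_2^k)^2 dx \le \|u_1^k-u_2^k\|_{L^2}^2$, so for $\triangle t<1$ we get $u_1^k=u_2^k$. Finally, since the limit is unique, the whole family (not just a subsequence) converges, completing the proof.
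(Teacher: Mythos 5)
Your overall architecture (compactness via the DG norm bound, limit passage to a very weak formulation, duality/transposition to upgrade regularity, uniqueness by testing the difference) matches the paper's proof. But there are two genuine gaps in the way you execute it.

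First, the strong $L^2(\Omega)$ convergence of $e^{\lambda_h^k}$ --- which is the very conclusion of the theorem and is also needed to pass to the limit in the reaction term $\int_\Omega e^{\lambda_h^k}(e^{\lambda_h^k}-1)\phi\,dx$ --- does not follow from Lemma \ref{lem.comp} alone when $d=3$. There $q^*=4$, so you only get $e^{\lambda_h^k/2}\to w$ strongly in $L^q(\Omega)$ for $q<4$, hence $e^{\lambda_h^k}\to w^2$ strongly in $L^{r}(\Omega)$ only for $r<2$. The missing ingredient is an equi-integrability argument for $e^{2\lambda_h^k}$: the entropy inequality \eqref{2.epi} bounds $\int_\Omega e^{\lambda_h^k}(e^{\lambda_h^k}-1)\lambda_h^k\,dx$ uniformly, which is $\int_\Omega g(e^{2\lambda_h^k})dx$ for a superlinear $g$, so the de la Vall\'ee-Poussin criterion gives weak $L^1$ compactness of $e^{2\lambda_h^k}$; combined with a.e.\ convergence this yields $e^{2\lambda_h^k}\to (u^k)^2$ strongly in $L^1(\Omega)$, i.e.\ the desired $L^2$ convergence. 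Without this your Step 1 is incomplete for $d=3$.

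Second, in the limit passage for the diffusion term you assert that the jump/average contributions ``drop out'' by Cauchy--Schwarz against the vanishing jump seminorm. They do not vanish individually: the penalty bound only gives $\sum_f\int_f|\jump{\lambda_h^k}|^2\,ds\le C{\tt h}/\triangle t$, so a term linear in $\jump{\lambda_h^k}$, such as $\sum_f\int_f\jump{\lambda_h^k}\cdot\aver{e^{\lambda_h^k}\na\phi}\,ds$, is merely bounded (Cauchy--Schwarz loses a factor $(\sum_f|f|)^{1/2}\sim h^{-1/2}$ against the gained $h^{1/2}$). What saves the argument is a cancellation: elementwise integration by parts produces $\sum_f\int_f\jump{e^{\lambda_h^k}}\cdot\na\phi\,ds$, and a second-order Taylor expansion shows that $\jump{e^{\lambda_h^k}}\cdot\na\phi-\jump{\lambda_h^k}\cdot\aver{e^{\lambda_h^k}\na\phi}$ is pointwise $O(|\jump{\lambda_h^k}|^2\aver{e^{2\lambda_h^k}})$, which the penalty term $\alpha(\lambda_h^k)$ controls with an extra factor ${\tt h}$. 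You need to exhibit this cancellation explicitly; the two face terms only vanish together. Two smaller points: you cannot test \eqref{dg} directly with $\phi\in H^2_n(\Omega)$ (it is not in $V_h$), so an interpolant $P_h\phi$ and the associated consistency errors must be handled; and in the bootstrap the implication ``$u^k\in L^2\Rightarrow u^k(1-u^k)\in L^2$'' is false ($(u^k)^2$ is then only in $L^1$) --- one must first reach $u^k\in H^1(\Omega)\hookrightarrow L^4(\Omega)$ via the duality/regularization argument before elliptic regularity gives $H^2_n(\Omega)$.
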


\begin{proof}
Let $\lambda_h^k\in V_h$ be a solution to \eqref{dg}.

{\em Step 1:} We claim that there exists a subsequence $(\eps_i,h_i)\to 0$
such that 
$$
  e^{\lambda_{h_i}^k}\to u^k\quad\mbox{strongly in }L^2(\Omega)\mbox{ as }i\to\infty.
$$
Indeed, by assumption, the initial entropy $(S_{h_i}^0)_{i\in\N}$ is bounded.
Then Lemma \ref{lem.dgnorm} implies that $e^{\lambda_{h}^k/2}$ is bounded in
the DG norm uniformly in $\eps$ and $h$. By the compactness Lemma \ref{lem.comp},
there exists a subsequence $(\eps_i,h_i)\to 0$ and a function $v^k\in H^1(\Omega)$
satisfying 
$$
  e^{\lambda_{h_i}^k/2}\to v^k\quad\mbox{strongly in }L^2(\Omega)\mbox{ as }i\to\infty.
$$
Consequently, $e^{\lambda_{h_i}^k}\to (v^k)^2=:u^k$ strongly in $L^1(\Omega)$.
The discrete entropy inequality \eqref{2.epi} shows that
$$
  \int_\Omega g(e^{2\lambda_h^k})dx
	= \int_\Omega e^{\lambda_{h}^k/2}(e^{\lambda_{h}^k/2}-1)\lambda_h^k dx
$$
is bounded uniformly in $(\eps,h)$, where $g(u)=\sqrt{u}(\sqrt{u}-1)\log u$ for
$u\ge 0$. As the function $g:[0,\infty)\to [0,\infty)$ is continuous and
satisfies $g(u)/u\to\infty$ as $u\to\infty$, we can apply the Theorem of
de la Vall\'ee-Poussin \cite[Theorem 1.3, p.~239]{EkTe72} (for a proof, see
\cite[Section II.2]{Mey66}) to conclude that there exists a subsequence
$e^{2\lambda_{h_i}^k}$ such that $e^{2\lambda_{h_i}^k}\to w^k$ weakly in $L^1(\Omega)$
as $i\to\infty$, for some function $w^k$. We deduce from the strong $L^1$ 
convergence of $e^{\lambda_{h_i}^k}$, possibly for another subsequence, that 
$e^{2\lambda_{h_i}^k}\to (u^k)^2=w^k$ a.e.\ in $\Omega$. This implies that
\begin{equation}\label{2.L2conv}
  e^{2\lambda_{h_i}^k}\to (u^k)^2 \quad\mbox{strongly in }L^1(\Omega),
\end{equation}
thus proving the desired $L^2$ convergence. 

{\em Step 2:} We claim that for any $\phi\in H_n^2(\Omega)\cap C^1(\overline\Omega)$,
it holds that
\begin{align}
  \frac{1}{\triangle t}\int_\Omega e^{\lambda_{h_i}^k}\phi dx
	&{}+ \sum_{K\in\T_{h_i}}\int_K e^{\lambda_{h_i}^k}\na\lambda_{h_i}^k
	\cdot\na\phi dx
	- \sum_{f\in\E_{h_i}}\int_f\jump{\lambda_{h_i}^k}\cdot
	\aver{e^{\lambda_{h_i}^k}\na\phi}ds \nonumber \\
	&{}+ \int_\Omega e^{\lambda_{h_i}^k}\big(e^{\lambda_{h_i}^k}-1\big)\phi dx
	\to \frac{1}{\triangle t}\int_\Omega e^{\lambda^{k-1}_{h_i}}\phi dx
	\quad\mbox{as }i\to\infty. \label{2.conv}
\end{align}
Since $\phi$ does not necessarily belong to $V_h$, 
we cannot use it as a test function in the
weak formulation \eqref{dg}. Therefore, let $P_h:C^0(\overline\Omega)\to
C^0(\overline\Omega)\cap V_h$ be the interpolation operator, defined, e.g., in
\cite[Section 2.3]{Cia78}. It possesses the following property 
\cite[Section 3.1.6]{Cia78}: There exists a
constant $C_I>0$ such that for all $K\in\T_h$ and $\phi\in H^2(K)$,
\begin{equation}\label{interpol}
  \|\phi-P_h\phi\|_{W^{m,q}(K)} \le C_I h_K^{2-d/2-(m-d/q)}\|\phi\|_{H^2(K)}
\end{equation}
for $m\le 2\le q$ such that $m-d/q\le 2-d/2$. In particular, for $\phi\in H^2(\Omega)$
and $d\le 3$,
\begin{equation}\label{interpol2}
  \|\phi-P_h\phi\|_{L^{\infty}(\Omega)} \le C_I h_i^{2-d/2}\|\phi\|_{H^2(\Omega)}\to 0
	\quad\mbox{as }h_i\to 0.
\end{equation}
For given $\phi\in H_n^2(\Omega)\cap C^1(\overline\Omega)$,
we choose the test function $\phi_{h_i}:=P_{h_i}\phi$ in \eqref{dg}:
\begin{align}
  \frac{1}{\triangle t}\int_\Omega e^{\lambda_{h_i}^{k-1}}\phi_{h_i}dx 
	&= \frac{1}{\triangle t}\int_\Omega e^{\lambda_{h_i}^k}\phi_{h_i}dx
	+ \eps_i\int_\Omega \lambda_{h_i}^k\phi_{h_i}dx
	+ \sum_{K\in\T_{h_i}}\int_Ke^{\lambda_{h_i}^k}\na\lambda_{h_i}^k
	\cdot\na\phi_{h_i}dx \nonumber \\
	&\phantom{xx}{}- \sum_{f\in\E_{h_i}}\int_f\jump{\lambda_{h_i}^k}\cdot
	\aver{e^{\lambda_{h_i}^k}\na\phi_{h_i}}ds
	+ \int_\Omega e^{\lambda_{h_i}^k}\big(e^{\lambda_{h_i}^k}-1\big)
	\phi_{h_i}dx. \label{2.15}
\end{align}
Here, we have used the fact that $\jump{\phi_{h_i}}=0$ since $\phi_{h_i}$ is
continuous. Note that \eqref{interpol2} implies that $\phi_{h_i}\to \phi$
strongly in $L^\infty(\Omega)$ as $i\to\infty$.
As $e^{\lambda_{h_i}^{k-1}}\to u^{k-1}$ strongly in
$L^2(\Omega)$, by assumption, we have for the left-hand side of \eqref{2.15}:
$$
  \int_\Omega e^{\lambda_{h_i}^{k-1}}(\phi_{h_i}-\phi)dx \to 0\quad\mbox{as }h_i\to 0.
$$
Similarly, as $e^{\lambda_{h_i}^k}\to u^k$ strongly in $L^2(\Omega)$,
we infer for the first and last integrals on the right-hand side of \eqref{2.15} that 
$$
  \int_\Omega e^{\lambda_{h_i}^k}(\phi_{h_i}-\phi)dx\to 0, \quad
	\int_\Omega e^{\lambda_{h_i}^k}\big(e^{\lambda_{h_i}^k}-1\big)
	(\phi_{h_i}-\phi)dx\to 0.
$$
Inequality \eqref{2.L2} shows that
$$
  \eps_i\|\lambda_{h_i}^k\|_{L^2(\Omega)}^2 
	\le \int_\Omega s(e^{\lambda_{h_i}^{k-1}})dx.
$$
Thus, $(\eps_i^{1/2}\lambda_{h_i}^k)$ is bounded in $L^2(\Omega)$ from which we have
$\eps_i\lambda_{h_i}^k\to 0$ strongly in $L^2(\Omega)$ as $(\eps_i,h_i)\to 0$. 
This implies that
the second integral on the right-hand side of \eqref{2.15} converges to zero.

Next, we prove for the third integral on the right-hand side of \eqref{2.15} that
$$
  \sum_{K\in\T_{h_i}}\int_Ke^{\lambda_{h_i}^k}\na\lambda_{h_i}^k
	\cdot\na(\phi_{h_i}-\phi)dx \to 0\quad\mbox{as }h_i\to 0.
$$
Indeed, by the H\"older inequality, the interpolation property \eqref{interpol}, and
the discrete entropy inequality \eqref{2.epi}, we obtain
\begin{align*}
  \bigg|\sum_{K\in\T_{h_i}}&\int_Ke^{\lambda_{h_i}^k}\na\lambda_{h_i}^k
	\cdot\na(\phi_{h_i}-\phi)dx\bigg|
	\le 2\sum_{K\in\T_{h_i}}\|e^{\lambda_{h_i}^k/2}\|_{L^4(K)}
	\|\na e^{\lambda_{h_i}^k/2}\|_{L^2(K)}\|\phi_{h_i}-\phi\|_{W^{1,4}(K)} \\
	&\le 2C_I\sum_{K\in\T_{h_i}}\|e^{\lambda_{h_i}^k/2}\|_{L^4(K)}
	\|\na e^{\lambda_{h_i}^k/2}\|_{L^2(K)}h_K^{1-d/4}\|\phi\|_{H^2(K)} \\
	&\le 2C_I\|e^{\lambda_{h_i}^k}\|_{L^2(\Omega)}^{1/2}
	\bigg(\sum_{K\in\T_{h_i}}\|\na e^{\lambda_{h_i}^k/2}\|_{L^2(K)}^2\bigg)^{1/2}
	\bigg(\sum_{K\in\T_{h_i}}h_K^{2-d/2}\|\phi\|_{H^2(K)}^2\bigg)^{1/2} \\
	&\le Ch_i^{1-d/4}\|\phi\|_{H^2(\Omega)}\to 0.
\end{align*}

It remains to prove for the fourth integral on the right-hand side of \eqref{2.15} that
$$
  \sum_{f\in\E_{h_i}}\int_f\jump{\lambda_{h_i}^k}\cdot
	\aver{e^{\lambda_{h_i}^k}\na(\phi_{h_i}-\phi)}ds \to 0\quad\mbox{as }h_i\to 0.
$$
To this end, we use the elementary inequality 
$|\aver{u\na v}|\le 2\aver{u}\aver{|\na v|}$ for functions $u$, $v$ with
nonnegative $u$ and the Cauchy-Schwarz inequality:
\begin{align}
  \bigg|\sum_{f\in\E_{h_i}}&\int_f\jump{\lambda_{h_i}^k}\cdot
	\aver{e^{\lambda_{h_i}^k}\na(\phi_{h_i}-\phi)}ds\bigg|^2
	\le \bigg|\sum_{f\in\E_{h_i}}\int_f |\jump{\lambda_{h_i}^k}|
	\big|\aver{e^{\lambda_{h_i}^k}\na(\phi_{h_i}-\phi)}\big|ds\bigg|^2 \nonumber \\
	&\le 4\bigg|\sum_{f\in\E_{h_i}}|\jump{\lambda_{h_i}^k}|
	\aver{e^{\lambda_{h_i}^k}}\aver{|\na(\phi_{h_i}-\phi)|}ds\bigg|^2 \nonumber \\
	&\le 4\sum_{f\in\E_{h_i}}\int_f\aver{e^{\lambda_{h_i}^k}}^2 
	|\jump{\lambda_{h_i}^k}|^2 ds
	\sum_{f\in\E_{h_i}}\int_f\aver{|\na(\phi_{h_i}-\phi)|}^2 ds. \label{2.aux3}
\end{align}
We estimate both integrals separately. First, the multiplicative trace inequality
in Lemma~\ref{lem.mti} shows that, for some constant $C>0$ and for faces or edges
$f=\pa K_+\cap\pa K_-$,
$$
  \int_f\aver{|\na(\phi_{h_i}-\phi)|}^2 ds
	\le C\sum_{K=K_\pm}\|\phi_{h_i}-\phi\|_{H^1(K)}\bigg(\frac{1}{h_K}
	\|\phi_{h_i}-\phi\|_{H^1(K)} + \|\phi_{h_i}-\phi\|_{H^2(K)}\bigg).
$$
We deduce from \eqref{interpol}, i.e.
$$
  \|\phi_{h_i}-\phi\|_{H^1(K)} \le C_Ih_K\|\phi\|_{H^2(K)}, \quad
	\|\phi_{h_i}-\phi\|_{H^2(K)} \le C_I\|\phi\|_{H^2(K)},
$$
that
$$
  \sum_{f\in\E_{h_i}}\int_f\aver{|\na(\phi_{h_i}-\phi)|}^2 ds \le Ch_i.
$$
Therefore, also using ${\tt h}(x)\le h_i$, we deduce from \eqref{2.aux3} that
$$
  \bigg|\sum_{f\in\E_{h_i}}\int_f\jump{\lambda_{h_i}^k}\cdot
	\aver{e^{\lambda_{h_i}^k}\na(\phi_{h_i}-\phi)}ds\bigg|^2
	\le C\frac{h_i^2}{p^2}\sum_{f\in\E_{h_i}}\int_f\frac{p^2}{{\tt h}_i}
	\aver{e^{\lambda_{h_i}^k}}^2|\jump{\lambda_{h_i}^k}|^2 ds,
$$
where ${\tt h}_i(x)=\min\{h_{i,K_+},h_{i,K_-}\}$ for $x\in \pa K_+\cap \pa K_-$.
We claim that the sum on the right-hand side is bounded uniformly in $h_i$.
By Definition \eqref{def.alpha},
$$
  \aver{e^{\lambda_{h_i}^k}}^2 \le \frac{2}{3C_{\rm inv}^2}\alpha(\lambda_{h_i}^k),
$$
such that we can estimate
\begin{align*}
  \sum_{f\in\E_{h_i}}\int_f\frac{p^2}{{\tt h}_i}
	\aver{e^{\lambda_{h_i}^k}}^2|\jump{\lambda_{h_i}^k}|^2 ds
	&\le \frac{2}{3C_{\rm inv}^2}\sum_{f\in\E_{h_i}}\int_f
	\frac{p^2}{{\tt h}_i}\alpha(\lambda_{h_i}^k)|\jump{\lambda_{h_i}^k}|^2 ds \\
	&\le \frac{2}{C_{\rm inv}^2}B(\lambda_{h_i}^k;\lambda_{h_i}^k,\lambda_{h_i}^k),
\end{align*}
where we used \eqref{2.aux2} in the last step. 
The proof of Lemma \ref{lem.epi} shows that
$B(\lambda_{h_i}^k;\lambda_{h_i}^k,\lambda_{h_i}^k)\ge C/\triangle t$ since
$e^{\lambda_{h_i}^k}$ is uniformly bounded in $L^2(\Omega)$. We conclude that
$$
  \bigg|\sum_{f\in\E_{h_i}}\int_f\jump{\lambda_{h_i}^k}\cdot
	\aver{e^{\lambda_{h_i}^k}\na(\phi_{h_i}-\phi)}ds\bigg| 
	\le \frac{Ch_i}{(\triangle t)^{1/2}} \to 0 \quad\mbox{as }h_i\to 0.
$$

We put together all the previous convergence results to infer that
\begin{align}
  \frac{1}{\triangle}\int_\Omega & e^{\lambda_{h_i}^k}(\phi_{h_i}-\phi)dx
	+ \eps_i\int_\Omega\lambda_{h_i}^k\phi_{h_i}dx
	+ \sum_{K\in\T_{h_i}}\int_K e^{\lambda_{h_i}^k}\na\lambda_{h_i}^k
	\cdot\na(\phi_{h_i}-\phi)dx \nonumber \\
	&{}- \sum_{f\in\E_{h_i}}\int_f\jump{\lambda_{h_i}^k}\cdot
	\aver{e^{\lambda_{h_i}^k}\na(\phi_{h_i}-\phi)}ds
	+ \int_\Omega e^{\lambda_{h_i}^k}\big(e^{\lambda_{h_i}^k}-1\big)
	(\phi_{h_i}-\phi)dx \nonumber \\
	&{}- \frac{1}{\triangle}\int_\Omega e^{\lambda_{h_i}^{k-1}}(\phi_{h_i}-\phi)dx \to 0
	\quad\mbox{as }i\to\infty. \label{2.star}
\end{align}
Thus, inserting \eqref{2.15}, all integrals involving $\phi_{h_i}$ cancel, and
we end up with \eqref{2.conv}.

{\em Step 3:} We prove that the limit $u^k$, derived in Step 1, is a solution to
the very weak formulation
\begin{equation}\label{2.step3}
  \frac{1}{\triangle t}\int_\Omega(u^k - u^{k-1})\phi dx
	= \int_\Omega u^k\Delta\phi dx + \int_\Omega u^k(1-u^k)\phi dx
\end{equation}
for all $\phi\in H_n^2(\Omega)\cap C^1(\overline\Omega)$. 
For the proof, we pass to the limit $h_i\to 0$ in each term of \eqref{2.conv}.
Because of \eqref{2.L2conv}, we have
$$
  \int_\Omega e^{\lambda_{h_i}^k}\phi dx \to \int_\Omega u^k\phi dx, \quad
  \int_\Omega e^{\lambda_{h_i}^k}\big(e^{\lambda_{h_i}^k}-1\big)\phi dx
	\to \int_\Omega u^k(u^k-1)\phi dx.
$$
The limit $i\to\infty$ in the remaining expression
$$
  I_i := \sum_{K\in\T_{h_i}}\int_K e^{\lambda_{h_i}^k}\na\lambda_{h_i}^k
	\cdot\nabla \phi dx
	- \sum_{f\in\E_{h_i}}\int_f\jump{\lambda_{h_i}^k}\cdot
	\aver{e^{\lambda_{h_i}^k}\na\phi}ds
$$
is more delicate. Consider the first term in the definition of $I_i$. 
Integrating by parts elementwise gives
\begin{align*}
  \sum_{K\in\T_{h_i}}\int_K e^{\lambda_{h_i}^k}\na\lambda_{h_i}^k\cdot\na \phi dx
  &= \sum_{K\in\T_{h_i}}\int_K \na e^{\lambda_{h_i}^k}\cdot\na\phi dx \\
	&= -\sum_{K\in\T_{h_i}}\int_K e^{\lambda_{h_i}^k}\Delta\phi dx
	+ \sum_{K\in\T_{h_i}}\int_{\pa K} e^{\lambda_{h_i}^k}\na\phi\cdot n ds \\
  &= -\int_\Omega e^{\lambda_{h_i}^k}\Delta\phi dx
	+ \sum_{f\in\E_{h_i}}\int_{f} \jump{e^{\lambda_{h_i}^k}}\cdot \na\phi ds,
\end{align*}
where we have used the fact that $\na \phi$ has a continuous normal component across 
interelement boundaries. From the previous identity and the $L^2$ convergence of 
$e^{\lambda^k_{h_i}}$, we obtain
$$
  \sum_{K\in\T_{h_i}}\int_K e^{\lambda_{h_i}^k}\na\lambda_{h_i}^k\cdot\na\phi dx
	- \sum_{f\in\E_{h_i}}\int_f\jump{e^{\lambda_{h_i}^k}}\cdot\na\phi ds
	= -\int_\Omega e^{\lambda_{h_i}^k}\Delta\phi dx  
	\to -\int_\Omega u^k \Delta\phi dx.
$$
We claim that
\begin{equation}\label{2.claim2}
  \sum_{f\in\E_{h_i}}\int_f\jump{e^{\lambda_{h_i}^k}}\cdot\na\phi ds
	- \sum_{f\in\E_{h_i}}\int_f\jump{\lambda_{h_i}^k}\cdot\aver{e^{\lambda_{h_i}^k}
	\na\phi} ds \to 0,
\end{equation}
since this implies that
$$
  I_i \to -\int_\Omega u^k\Delta\phi dx,
$$
and thus shows \eqref{2.step3}. 

For the proof of \eqref{2.claim2}, let $x\in\pa K_+\cap\pa K_-$ for two neighboring
elements $K_+$, $K_-\in\T_{h_i}$ and set $\lambda_\pm:=\lambda_{h_i}^k|_{K_\pm}$.
We assume without loss of generality that $\lambda_+\ge\lambda_-$ since
otherwise, we may exchange $K_+$ and $K_-$. The definitions of the jump
$\jump{\cdot}$ and average $\aver{\cdot}$ imply that
\begin{align}
  \Big|\jump{e^{\lambda_{h_i}^k}}&\cdot\na\phi
	- \jump{\lambda_{h_i}^k}\cdot\aver{e^{\lambda_{h_i}^k}\na\phi}\Big| 
	\nonumber \\
	&= \bigg|\bigg((e^{\lambda_+}n_+ + e^{\lambda_-}n_-)
	- (\lambda_+n_+ + \lambda_-n_-)\frac12(e^{\lambda_+}+e^{\lambda_-})\bigg)
	\cdot\na\phi\bigg| \nonumber \\
	&= e^{\lambda_-}\bigg|\bigg((e^{\lambda_+-\lambda_-}-1)n_+
	- (\lambda_+ - \lambda_-)n_+\frac12(e^{\lambda_+-\lambda_-}+1)\bigg)\cdot\na\phi\bigg| 
	\nonumber \\
	&\le e^{\lambda_-}\bigg|(e^{\lambda_+-\lambda_-}-1) - (\lambda_+ - \lambda_-)
	\frac12(e^{\lambda_+-\lambda_-}+1)\bigg||\na\phi| \nonumber \\
	&=: e^{\lambda_-}|g(\lambda_+-\lambda_-)||\na\phi|, \label{2.aux4}
\end{align}
where $g(s)=(e^s-1)+s(e^s+1)/2$ for $s\ge 0$. A Taylor expansion shows that
$g(s)=g''(\xi)s^2/2=-\xi e^\xi s^2/4$ for some $0\le\xi\le s$. Therefore
$|g(s)|\le s^2 e^{2s}/4$ for $s\ge 0$, and we obtain
\begin{align*}
  e^{\lambda_-}|g(\lambda_+-\lambda_-)|
	&\le \frac{e^{\lambda_-}}{4}(\lambda_+ - \lambda_-)^2 e^{2(\lambda_+ - \lambda_-)}
	= \frac14(\lambda_+ - \lambda_-)^2e^{2\lambda_+ - \lambda_-} \\
  &\le \frac12(\lambda_+ - \lambda_-)^2\frac{e^{2\lambda_+}+e^{2\lambda_-}}{2}
	e^{|\lambda_-|}.
\end{align*}
The difference can be identified with the jump of $\lambda_{h_i}^k$ across
$f=\pa K_+\cap\pa K_-$, while the sum corresponds to the average of
$e^{2\lambda_{h_i}^k}$ in $f$. Thus, it follows from \eqref{2.aux4} that
\begin{align*}
  \bigg|\sum_{f\in\E_{h_i}}&\int_f\jump{e^{\lambda_{h_i}^k}}\cdot\na\phi ds
	- \sum_{f\in\E_{h_i}}\int_f\jump{\lambda_{h_i}^k}\cdot\aver{e^{\lambda_{h_i}^k}
	\na\phi} ds\bigg| \\
	&\le \frac12\sum_{f\in\E_{h_i},\,f=\pa K_+\cap \pa K_-}
	\|\na\phi\|_{L^\infty(f)}\max\big\{\exp(\|\lambda_{h_i}^k\|_{L^\infty(K_+)}),
	\exp(\|\lambda_{h_i}^k\|_{L^\infty(K_-)})\big\} \\
	&\phantom{xx}{}\times\int_f
	\jump{\lambda_{h_i}^k}^2\aver{e^{2\lambda_{h_i}^k}}ds.
\end{align*}
By definition \eqref{def.alpha} of the stabilization factor, it holds that
$$
  \max\big\{\exp(\|\lambda_{h_i}^k\|_{L^\infty(K_+)}),
	\exp(\|\lambda_{h_i}^k\|_{L^\infty(K_-)})\big\}\aver{e^{2\lambda_{h_i}^k}}
	\le \frac{2\alpha(\lambda_{h_i}^k)}{3C_{\rm inv}^2}.
$$
Using this estimate and the coercivity estimate \eqref{2.aux2} for the form $B$, 
we can write
\begin{align*}
  \bigg|\sum_{f\in\E_{h_i}}&\int_f\jump{e^{\lambda_{h_i}^k}}\cdot\na\phi ds
	- \sum_{f\in\E_{h_i}}\int_f\jump{\lambda_{h_i}^k}\cdot\aver{e^{\lambda_{h_i}^k}
	\na\phi} ds\bigg| \\
	&\le \frac{1}{3C_{\rm inv}^2}\|\na\phi\|_{L^\infty(\Omega)}
	\sum_{f\in\E_{h_i}}\int_f\alpha(\lambda_{h_i}^k)|\jump{\lambda_{h_i}^k}|^2 ds \\
	&\le \frac{1}{3C_{\rm inv}^2}\|\na\phi\|_{L^\infty(\Omega)}
	\frac{h_i}{p^2}\sum_{f\in\E_{h_i}}\int_f\frac{p^2}{{\tt h}_i}
	\alpha(\lambda_{h_i}^k)|\jump{\lambda_{h_i}^k}|^2 ds \\
  &\le \frac{h_i}{p^2C_{\rm inv}^2}\|\na\phi\|_{L^\infty(\Omega)}
	B(\lambda_{h_i}^k;\lambda_{h_i}^k,\lambda_{h_i}^k).
\end{align*}
We know from the proof of Lemma \ref{lem.epi} that
$B(\lambda_{h_i}^k;\lambda_{h_i}^k,\lambda_{h_i}^k)\le C/\triangle t$ is
uniformly bounded. This proves our claim \eqref{2.claim2}.

Now, we can pass to the limit $i\to\infty$ in \eqref{2.conv}, which yields 
\eqref{2.step3}. 

{\em Step 4:} We claim that the solution $u^k\in L^2(\Omega)$ 
to \eqref{2.conv} satisfies the
regularity $u^k\in H^1(\Omega)$ and hence is a weak solution to 
\eqref{sc.eq}-\eqref{sc.bc}. To this end, we use the duality method as in
\cite[p.~318]{Bre11}. Let $\T:L^2(\Omega)\to H_n^2(\Omega)$ be defined
by $\T v=u$, where $u$ solves the elliptic problem 
$u-\triangle t\Delta u=v$ in $\Omega$,
$\na u\cdot n=0$ on $\pa\Omega$. By \cite[Theorem 8.3.10]{MaRo10},
for $v\in C_0^\infty(\Omega)$, it holds that $\T v\in H^2_n(\Omega)\cap 
C^1(\overline\Omega)$. Then, introducing $g:=u^{k-1}+\triangle t
u^k(1-u^k)$, the very weak formulation \eqref{2.step3} 
can be equivalently written as
$$
  \int_\Omega u^k(\phi - \triangle t\Delta \phi)dx 
	= \int_\Omega g\phi dx
$$
for all $\phi\in H^2_n(\Omega)\cap C^1(\overline\Omega)$. Given 
$v\in C_0^\infty(\Omega)$, we set $\phi=\T v$, and the previous
equation becomes
\begin{equation}\label{2.dual}
  \int_\Omega u^k v dx = \int_\Omega g\T v dx.
\end{equation}
As $C_0^\infty(\Omega)$ is dense in $L^2(\Omega)$ and $\T$ is continuous, 
\eqref{2.dual} remains valid for all $v\in L^2(\Omega)$. 

Next, we denote by $\T':H_n^2(\Omega)' \to L^2(\Omega)$ the dual operator of $\T$.
According to \cite[Theorem 8.3.10]{MaRo10}, the operator $\T$ can be
extended to an operator $\T:L^p(\Omega)\cap H^1(\Omega)'\to W^{2,p}(\Omega)$
for $1<p\le 2$. (This is basically a regularity statement for the elliptic problem.)
We deduce from the Sobolev embedding theorem that $W^{2,p}(\Omega)\hookrightarrow
C^0(\overline\Omega)$ for $p>3/2$ since $d\le 3$. Therefore, there
exists an extension $\T^*:C^0(\overline\Omega)'\to L^{p'}(\Omega)$ of $\T'$,
where $p'=p/(p-1)<3$. 

Now, $g\in L^1(\Omega)\subset C^0(\overline\Omega)'$. Then \eqref{2.dual}
implies that $u^k=\T^*(g)\in L^{p'}(\Omega)$ for $p'<3$ and consequently,
$g\in L^q(\Omega)$ for $q<3/2$.

It remains to show that $u^k\in H^1(\Omega)$. Since $u^k\in L^2(\Omega)$,
the elliptic problem
$$
  v_m - \frac{1}{m}\Delta v_m = u^k \quad\mbox{in }\Omega, \quad
	\na v_m\cdot n = 0 \quad\mbox{on }\pa\Omega,
$$
possesses a unique solution $v_m\in H^2_n(\Omega)$ \cite[Theorem 8.3.10]{MaRo10}.
Multiplying the elliptic equation by $v_m$ and applying the Cauchy-Schwarz
inequality, we have
$$
  \frac12\int_\Omega v_m^2 dx + \frac{1}{m}\int_\Omega|\na v_m|^2 dx
	\le \frac12\int_\Omega (u^k)^2 dx.
$$
Thus, $(v_m)$ is bounded in $L^2(\Omega)$ and it follows the existence of a
subsequence which is not relabeled that
$v_m\rightharpoonup u^k$ weakly in $L^2(\Omega)$ as $m\to\infty$. 
Using $v=v_m-\triangle t\Delta v_m$ in \eqref{2.dual}, it follows that
\begin{align}
  \int_\Omega g \T v dx &= \int_\Omega u^k v dx
	= \int_\Omega\bigg(v_m - \frac{1}{m}\Delta v_m\bigg)(v_m-\triangle t\Delta v_m)dx 
	\nonumber \\
	&= \int_\Omega v_m^2 dx + \bigg(\triangle t + \frac{1}{m}\bigg)
	\int_\Omega|\na v_m|^2 dx + \frac{\triangle t}{m}\int_\Omega(\Delta v_m)^2 dx
	\nonumber \\
	&\ge \int_\Omega v_m^2 dx + \triangle t\int_\Omega|\na v_m|^2 dx
	\ge \triangle t\|v_m\|_{H^1(\Omega)}^2. \label{2.vm}
\end{align}
We apply the H\"older inequality and use the Sobolev embedding $H^1(\Omega)
\hookrightarrow L^{q'}(\Omega)$ for $q'\le 6$, knowing that 
$g\in L^q(\Omega)$ for $q<3/2$:
\begin{align*}
  \int_\Omega g\T v dx 
	&\le \|g\|_{L^q(\Omega)}\|\T v\|_{L^{q'}(\Omega)}
	\le C\|g\|_{L^q(\Omega)}\|\T v\|_{H^1(\Omega)} \\
	&\le C(\triangle t)\|g\|_{L^q(\Omega)}^2 
	+ \frac{\triangle t}{2}\|v_m\|_{H^1(\Omega)}^2,
\end{align*}
where $3<q'\le 6$ and $1/q+1/q'=1$. 
The $H^1(\Omega)$ norm can be absorbed by the corresponding
term on the right-hand side of \eqref{2.vm}, and we end up with
$$
  \frac{\triangle t}{2}\|v_m\|_{H^1(\Omega)}^2
	\le C(\triangle t)\|g\|_{L^q(\Omega)}^2.
$$
This shows that $(v_m)$ is bounded in $H^1(\Omega)$. Thus, there exists a
subsequence (not relabeled) which converges weakly in $H^1(\Omega)$ to some function 
$w\in H^1(\Omega)$. Since $v_m\rightharpoonup u^k$ 
weakly in $L^2(\Omega)$, we conclude that $w=u^k\in H^1(\Omega)$. 

Knowing that $u^k\in H^1(\Omega)$ solves \eqref{2.step3}, we can integrate
by parts in the first term of the right-hand side of \eqref{2.step3}, leading to
$$
  \frac{1}{\triangle t}\int_\Omega(u^k-u^{k-1})\phi dx
	= -\int_\Omega\na u^k\cdot\na \phi dx + \int_\Omega u^k(1-u^k)\phi dx
$$
for all $\phi\in H^2_n(\Omega)$ and, by density, for all $\phi\in H^1(\Omega)$.

Moreover, since $u^k\in L^4(\Omega)$ and consequently, $u^k(1-u^k)\in L^2(\Omega)$,
elliptic regularity implies that $u^k\in H^2(\Omega)$ and $\na u^k\cdot n=0$
on $\pa\Omega$, i.e.\ $u^k\in H_n^2(\Omega)$. We conclude that $u^k$ solves
\eqref{2.uk}.

{\em Step 5:} We prove the uniqueness of weak solutions
to \eqref{sc.eq}-\eqref{sc.bc} to conclude the convergence of the whole
sequence $e^{\lambda_{h}^k}$ to $u^k$. Let $u^k$, $v^k\in H^1(\Omega)$ be two
weak solutions to \eqref{sc.eq}-\eqref{sc.bc}. Taking the difference of the
corresponding weak formulations with the test function $u^k-v^k$, we obtain
\begin{align*}
  0 &= \frac{1}{\triangle t}\int_\Omega(u^k-v^k)^2 dx
	+ \int_\Omega|\na(u^k-v^k)|^2 dx + \int_\Omega\big(u^k(u^k-1)-v^k(v^k-1)\big)
	(u^k-v^k) dx \\
	&= \bigg(\frac{1}{\triangle t}-1\bigg)\int_\Omega(u^k-v^k)^2 dx
	+ \int_\Omega|\na(u^k-v^k)|^2 dx + \int_\Omega(u^k+v^k)(u^k-v^k)^2 dx.
\end{align*}
Thus, choosing $\triangle t<1$, we infer that $u^k-v^k=0$ in $\Omega$.
\end{proof}


\section{Numerical results}\label{sec.num}

We present some numerical results for the Fisher-KPP equation in one space
dimension,
\begin{align}
  & \pa_t u = Du_{xx} + u(1-u) \quad\mbox{in }\Omega=(0,1),\ t>0, \label{3.eq} \\
	& u_x\cdot n=0\quad\mbox{at }x=0,1,\ t>0, \quad u(0)=u^0\quad\mbox{in }(0,1).
	\label{3.bic}
\end{align}

\subsection{One group of species}

Let $D=10^{-4}$ and $u_0(x)=0.8$ for $0<x<1/2$, $u_0(x)=0$ else.
Problem \eqref{3.eq}-\eqref{3.bic} models the evolution of one species
initially concentrated in the domain $(0,1/2)$. We solve this problem
by using an implicit Euler scheme in time and a continuous $P_1$ finite-element
discretization, both on a uniform mesh. The reaction term is treated implicitly.
The Newton method with relaxation is used at each time step, up to convergence. 
The integrals are computed by using a Gau\ss{}-Legendre quadrature formula 
of order 8.
Figure \ref{fig.u} shows the density $u(x,t)$ at various time instances.
We observe that the finite-element solution $u_h^k$ becomes negative even on the
finer mesh, and it is pushed towards $-\infty$ in some region
since $u^*=0$ is a repulsive steady state. 

\begin{figure}[ht]
\includegraphics[width=0.49\textwidth]{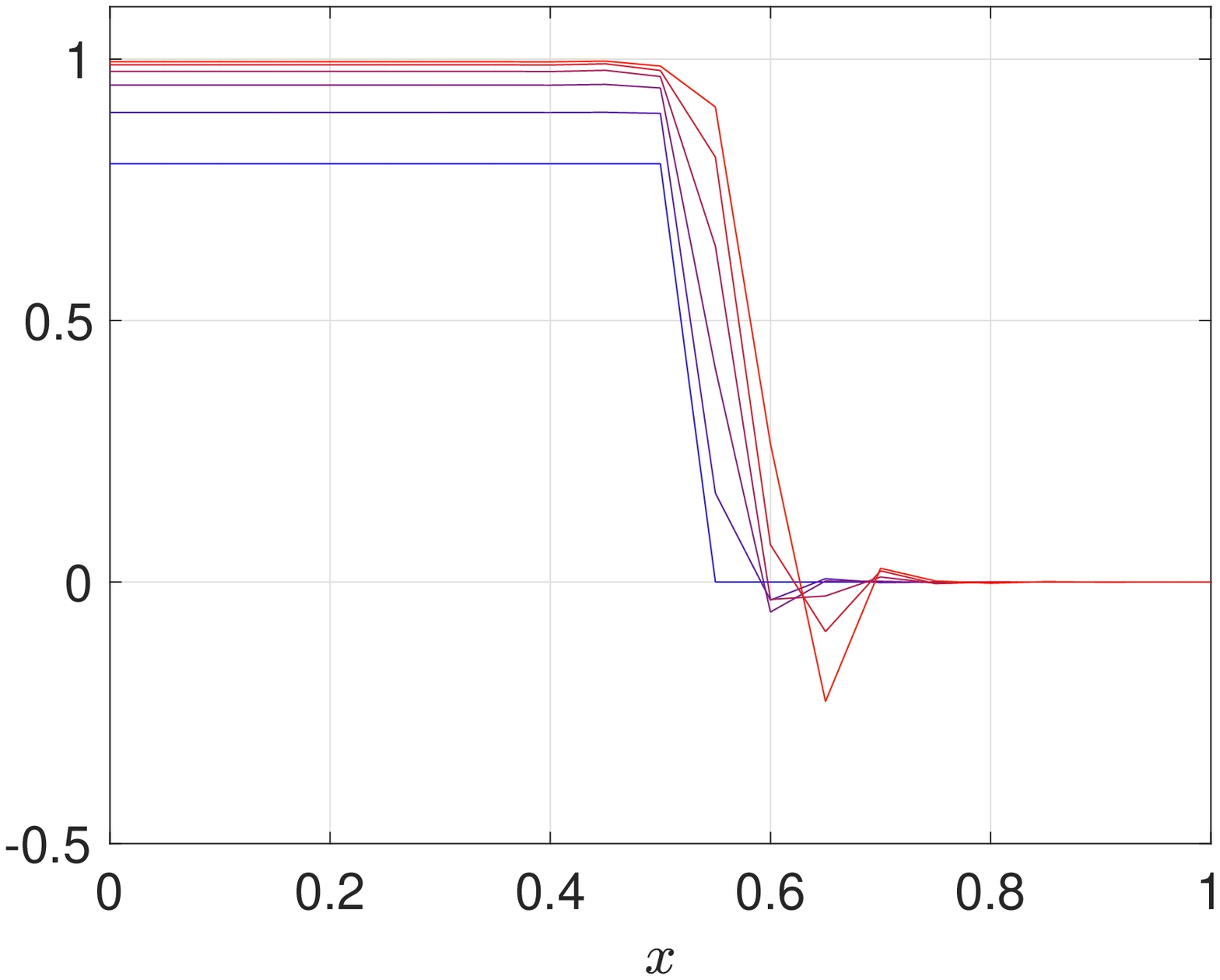}  
\includegraphics[width=0.49\textwidth]{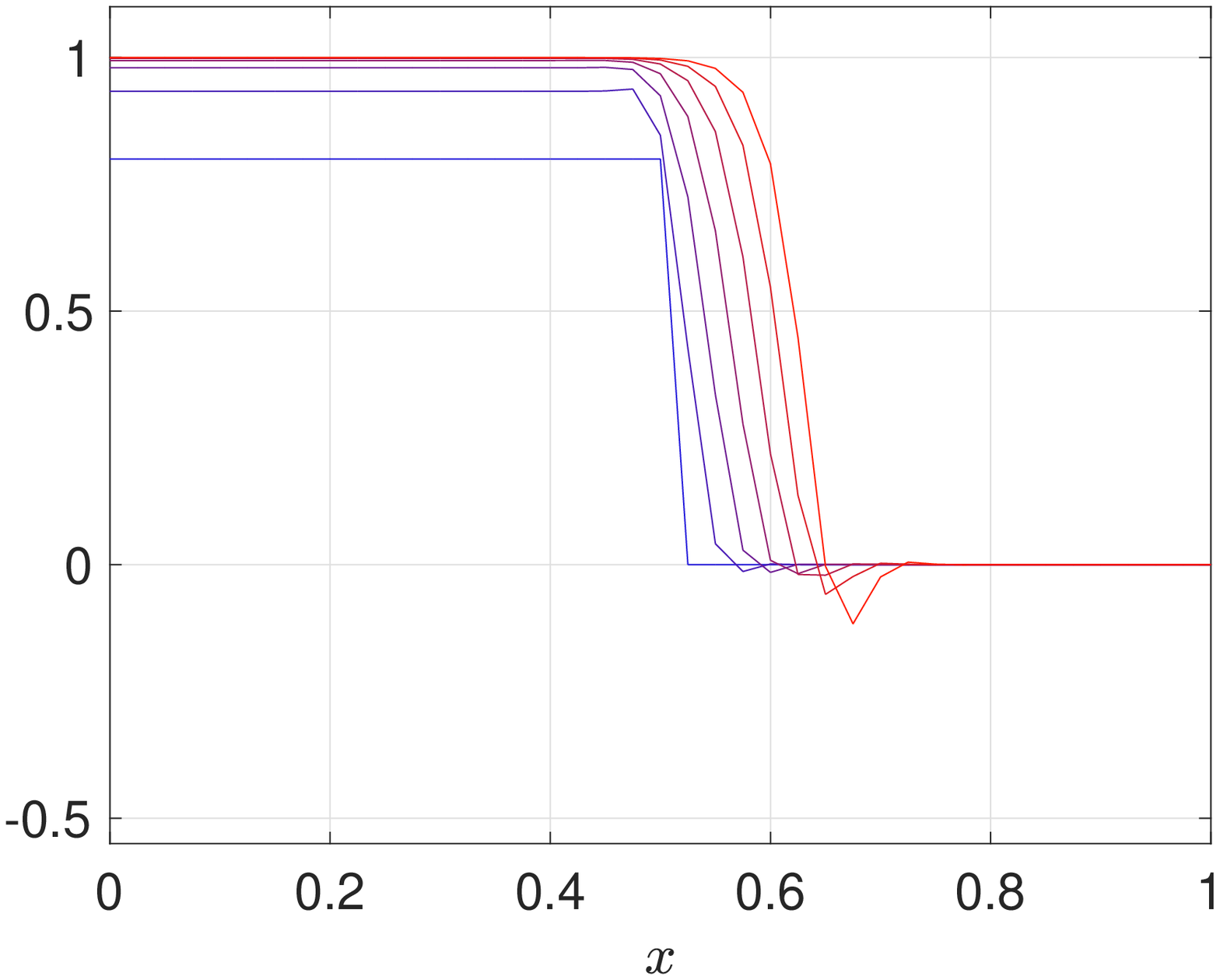}
\caption{Continuous $P_1$ finite-element discretization of problem
\eqref{3.eq}-\eqref{3.bic} in the variable $u$, using $N_{\rm el}=20$ elements 
(left) and $N_{\rm el}=40$ elements (right). The time step size is in both cases
$\triangle t=1/6$, and the solutions move from left to right.} 
\label{fig.u}  
\end{figure}

These results motivate the introduction of the exponential transformation
$u=\exp(\lambda)$. We are choosing the same initial datum as before but choosing
$u_0(x)=10^{-16}$ instead of $u_0(x)=0$ to allow for the exponential transformation. 
Figure \ref{fig.lam} shows the solutions to the continuous $P_1$
finite-element approximation associated to problem \eqref{sc.eq}-\eqref{sc.bc} 
in the variable $\lambda_h^k$. The implicit nonlinear scheme is solved again by
Newton's method with relaxation at each time step. The integrals are solved again
by a Gau\ss{}-Legendre quadrature formula of order 8.
Note that if $p=1$, the integrals are of the type $\int_K e^{ax+b}(cx+d)dx$
and thus can be integrated exactly. The discrete densities
$\exp(\lambda_h^k)$ are positive by construction. However, we need more
relaxation in the Newton method when higher-order schemes $p>1$ are used,
which slows down the algorithm.

\begin{figure}[ht]
\includegraphics[width=0.49\textwidth]{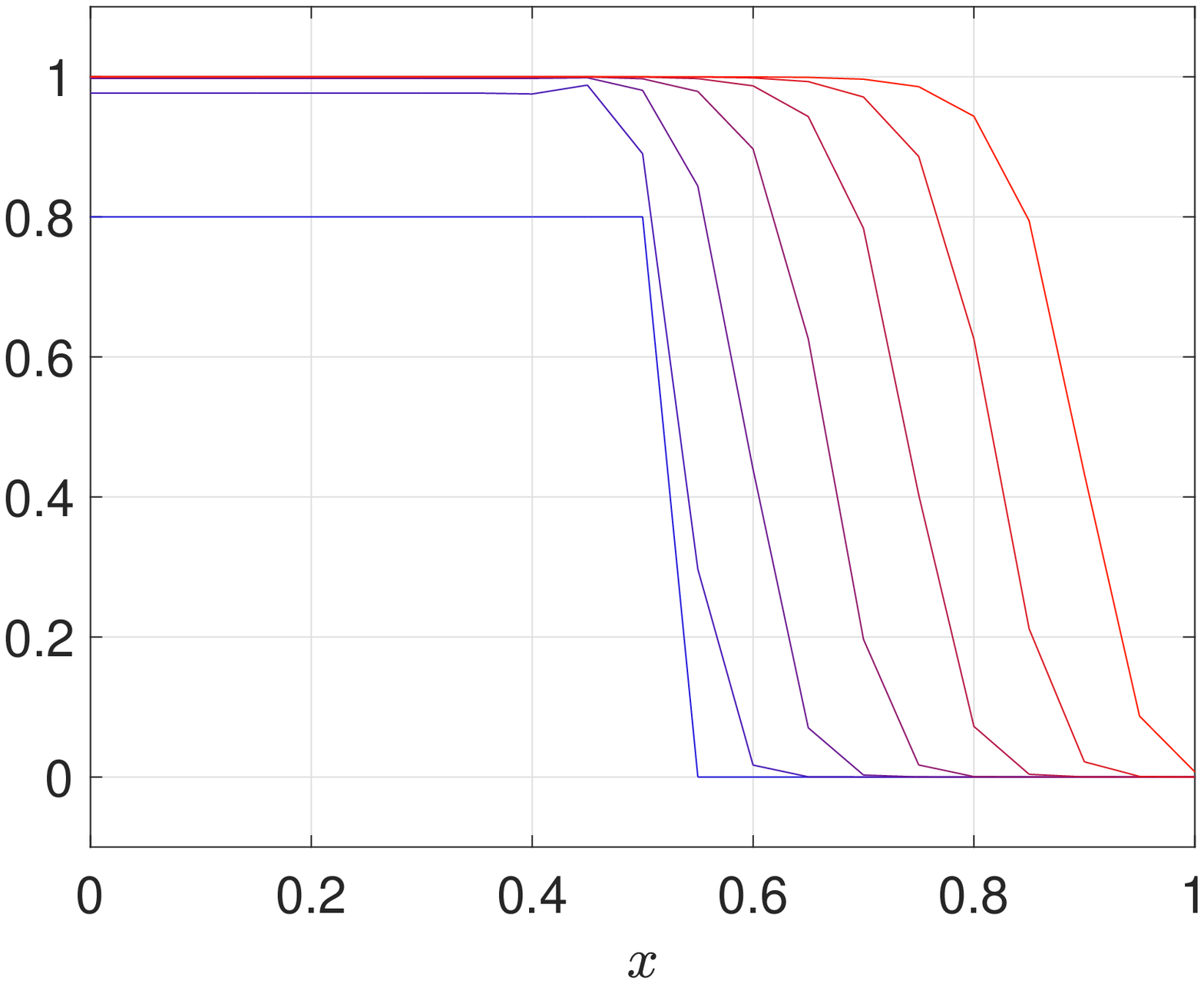}
\includegraphics[width=0.49\textwidth]{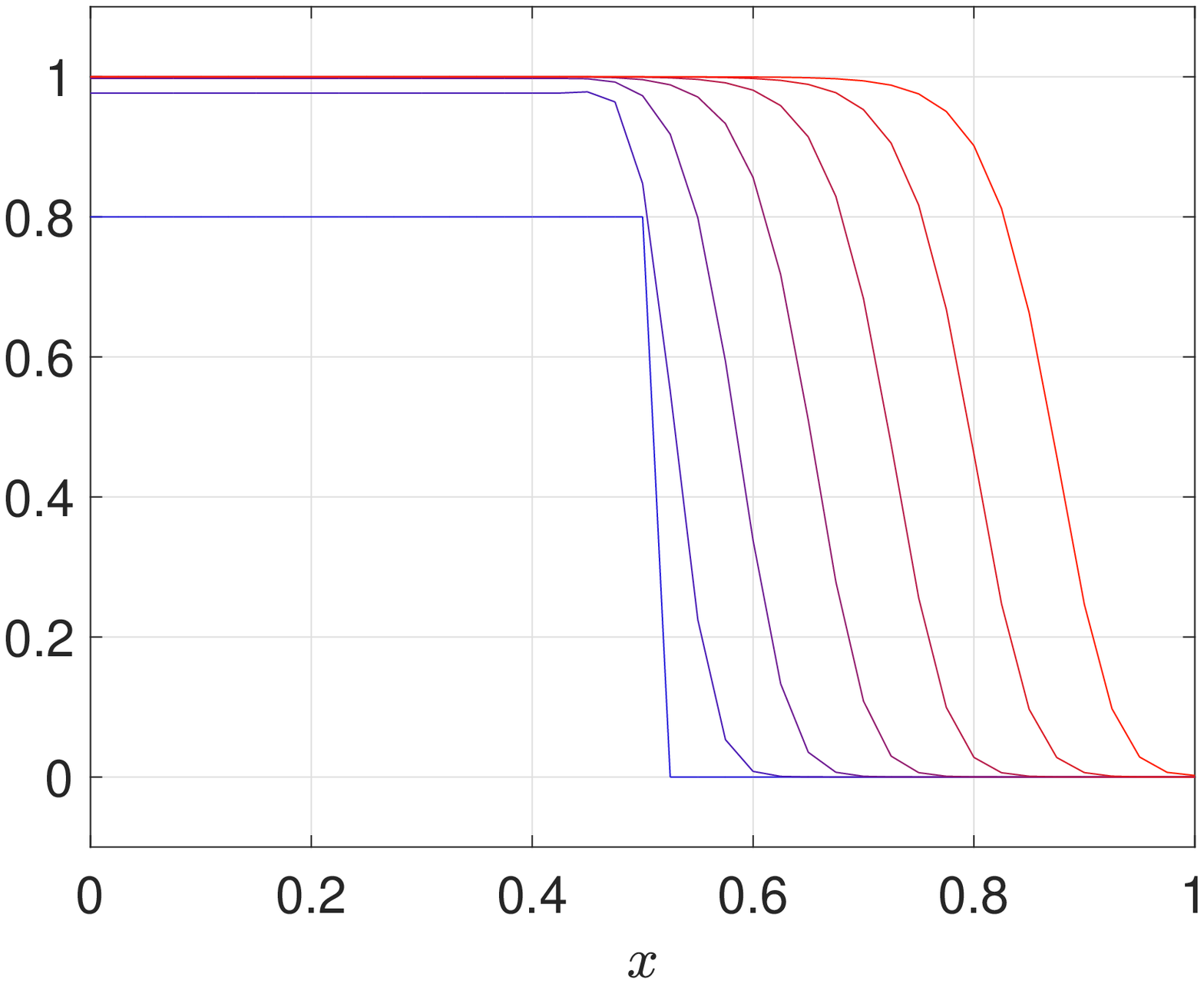}
\caption{Continuous $P_1$ finite-element discretization of problem
\eqref{sc.eq}-\eqref{sc.bc} in the variable $\lambda^k$, using $N_{\rm el}=20$ elements 
(left) and $N_{\rm el}=40$ elements (right). The time step size is in both cases
$\triangle t=1/6$ and the end time is $T=20$.}  
\label{fig.lam}
\end{figure} 

Therefore, we employ a discontinuous Galerkin method with polynomial order
$p\ge 1$ for problem \eqref{sc.eq}-\eqref{sc.bc} in the variable $\lambda_h^k$;
see scheme \eqref{dg}. The regularization term is not necessary for the numerics,
i.e., we set $\eps=0$ in \eqref{dg} for our simulations. 
Figure \ref{fig.dg.u08} illustrates the discrete solutions
for polynomial orders $p=1,2,3$, indicating that the method is stable with respect
to the order. The jumps are due to the discontinuous Galerkin method.

\begin{figure}[ht]
\includegraphics[width=0.49\textwidth]{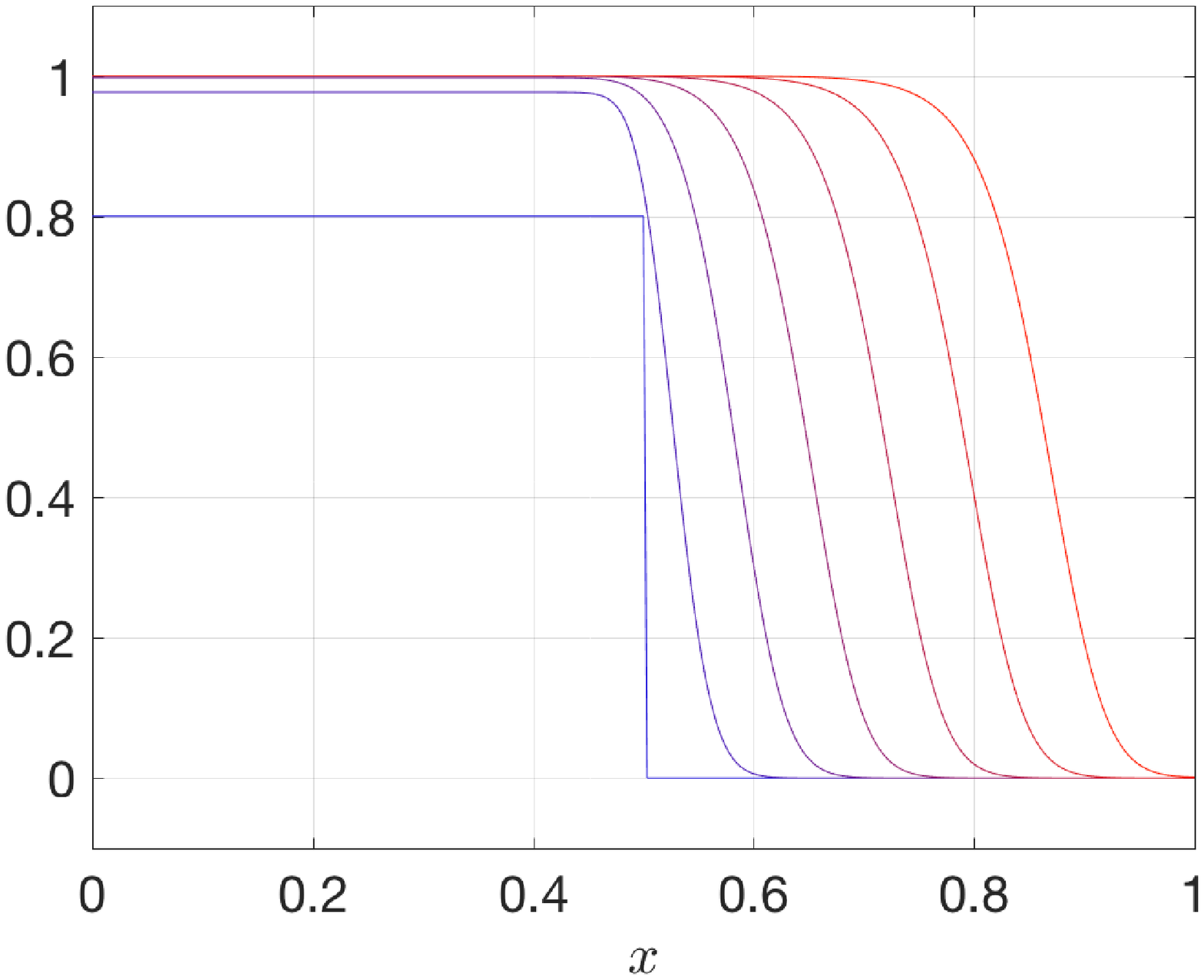}  
\includegraphics[width=0.49\textwidth]{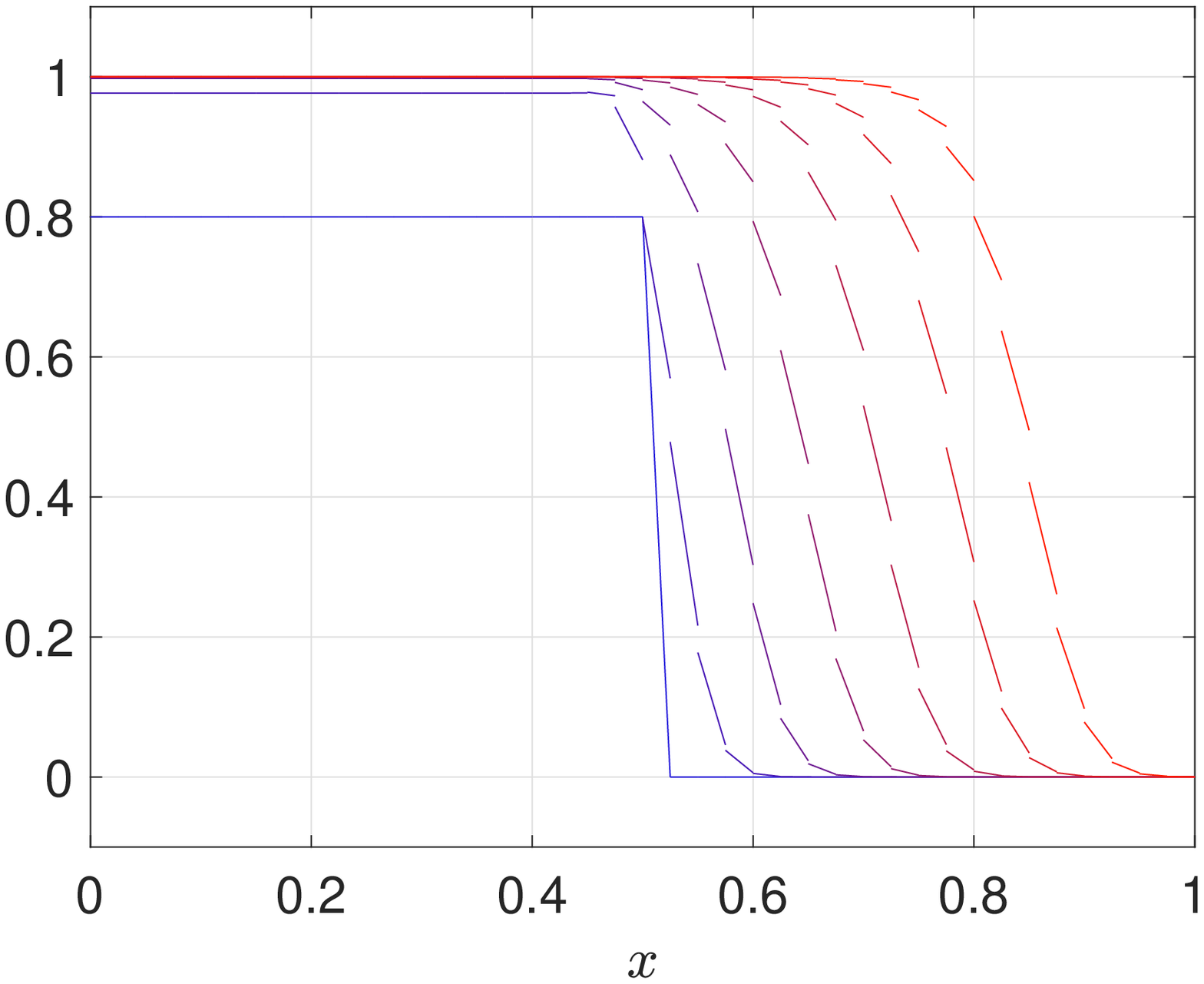}
\includegraphics[width=0.49\textwidth]{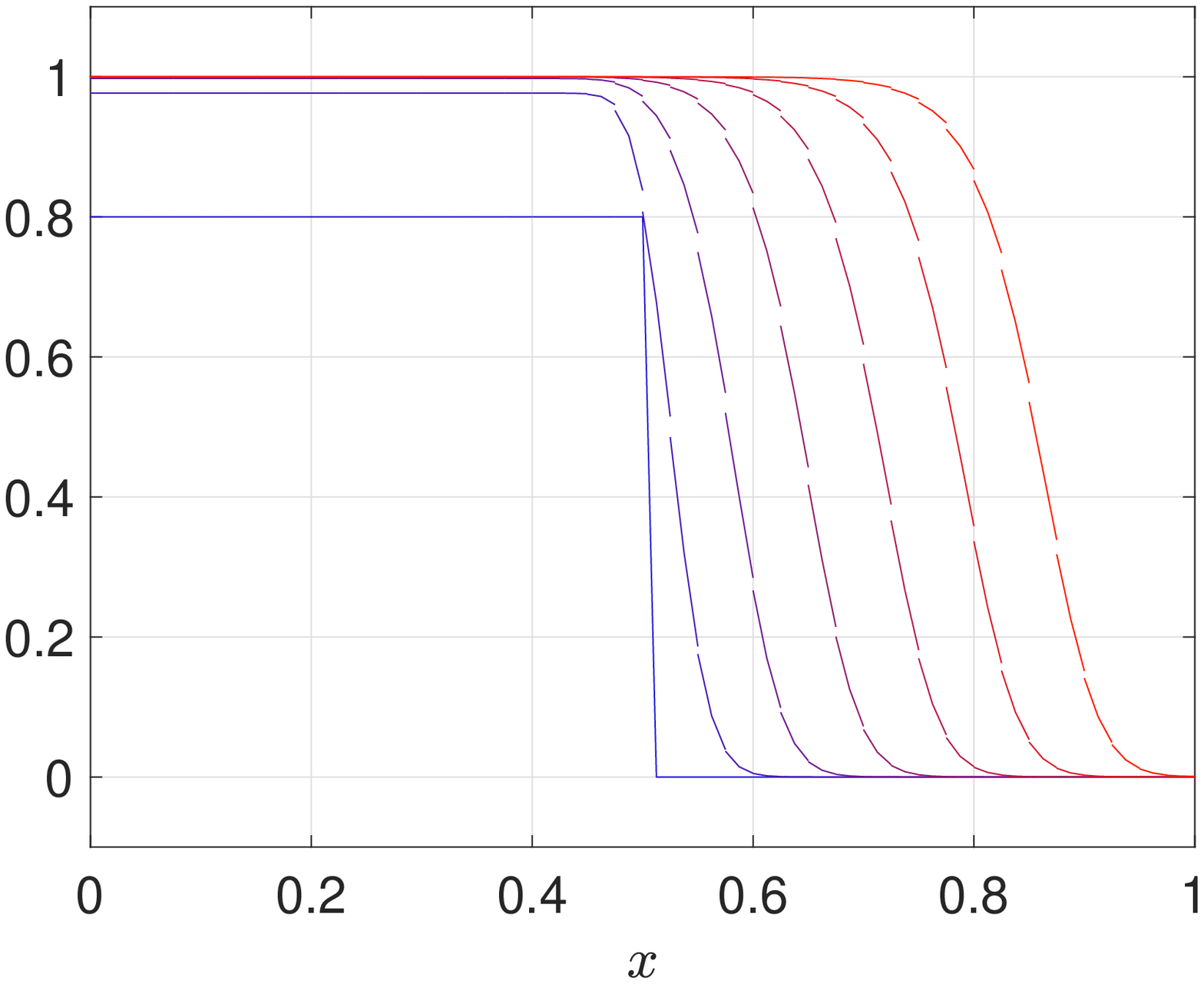}
\includegraphics[width=0.49\textwidth]{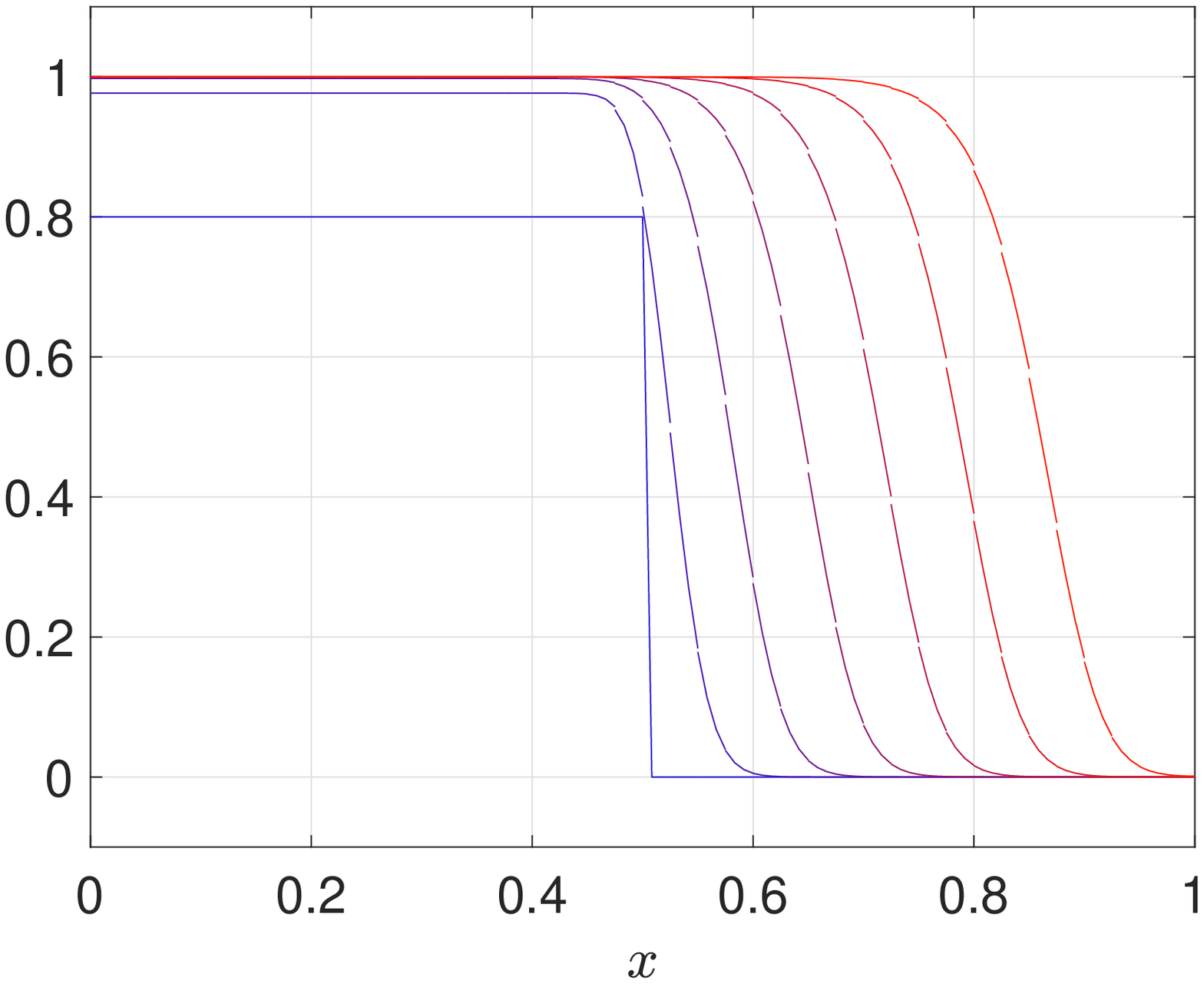}
\caption{Reference solution computed from the $P_1$ finite-element scheme
with $N_{\rm el}=300$ elements and $\triangle t=1/3$
(left top) and solutions computed from the DG scheme with $N_{\rm el}=40$ elements,
$\triangle t=1/3$, and polynomial order $p=1$ (right top), $p=2$ (left bottom),
and $p=3$ (right bottom). The initial datum is $u_0(x)=0.8$ for $0<x<1/2$ and
$u_0(x)=10^{-16}$ else.}
\label{fig.dg.u08}
\end{figure}

Figure \ref{fig.dg.u1} represents the discrete solutions with the same numerical
parameters as in Figure \ref{fig.dg.u08} but with the initial datum
$u_0(x)=1$ for $0<x<1/2$ and $u_0(x)=0$ else. Also in this example, the
lower and upper bounds $0\le \exp(\lambda_h^k)\le 1$ are always satisfied.

\begin{figure}[ht]
\includegraphics[width=0.49\textwidth]{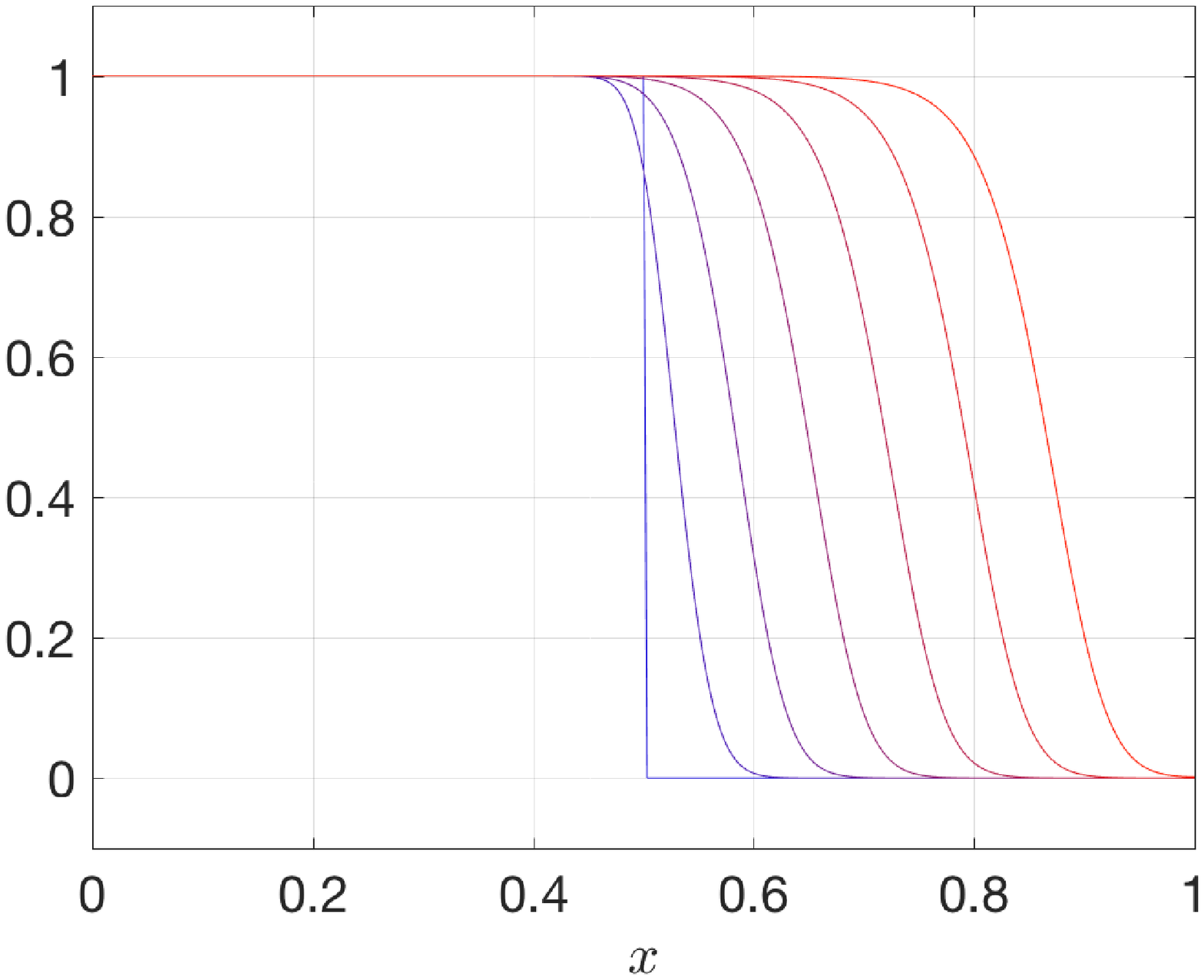}
\includegraphics[width=0.49\textwidth]{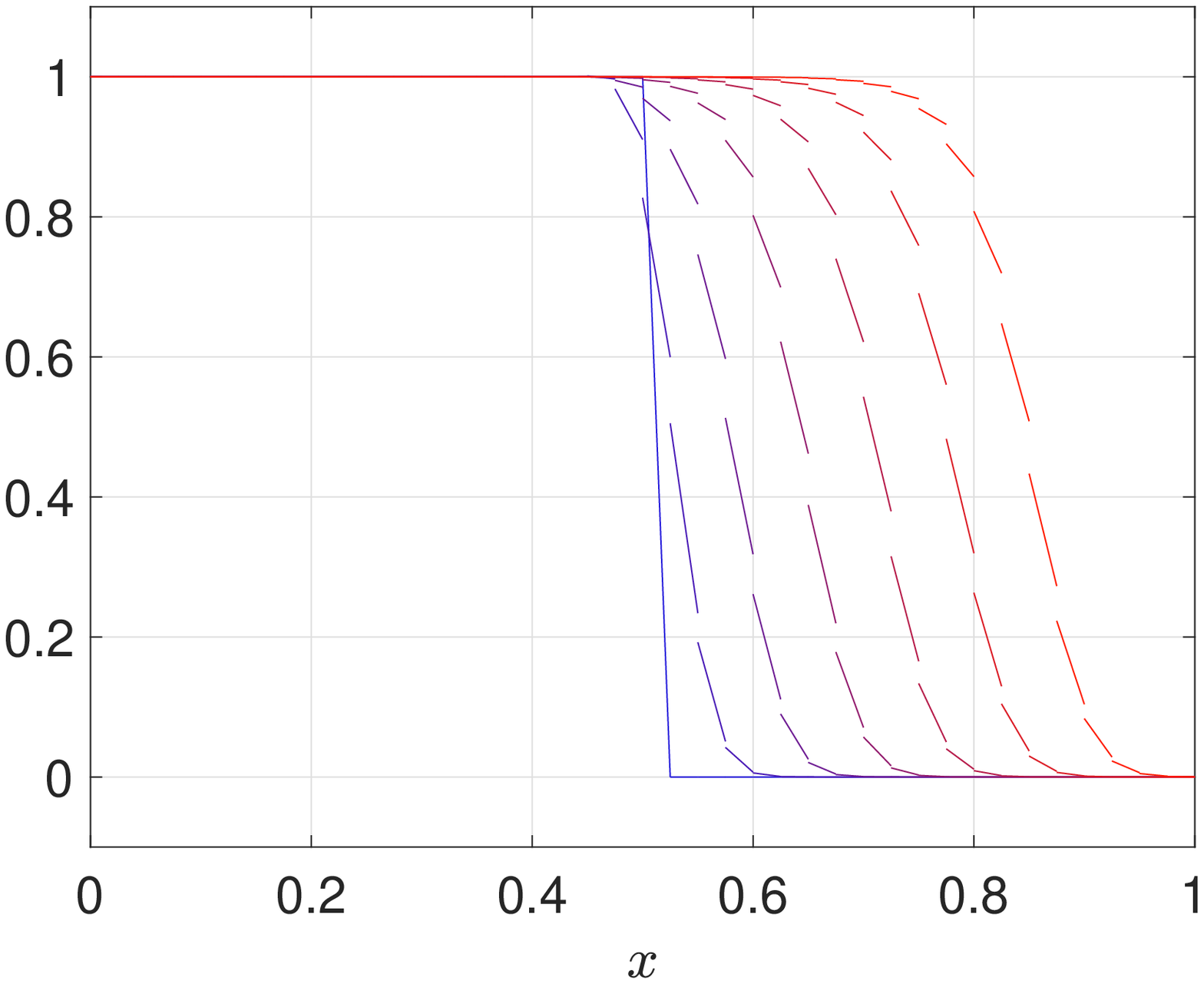}
\includegraphics[width=0.49\textwidth]{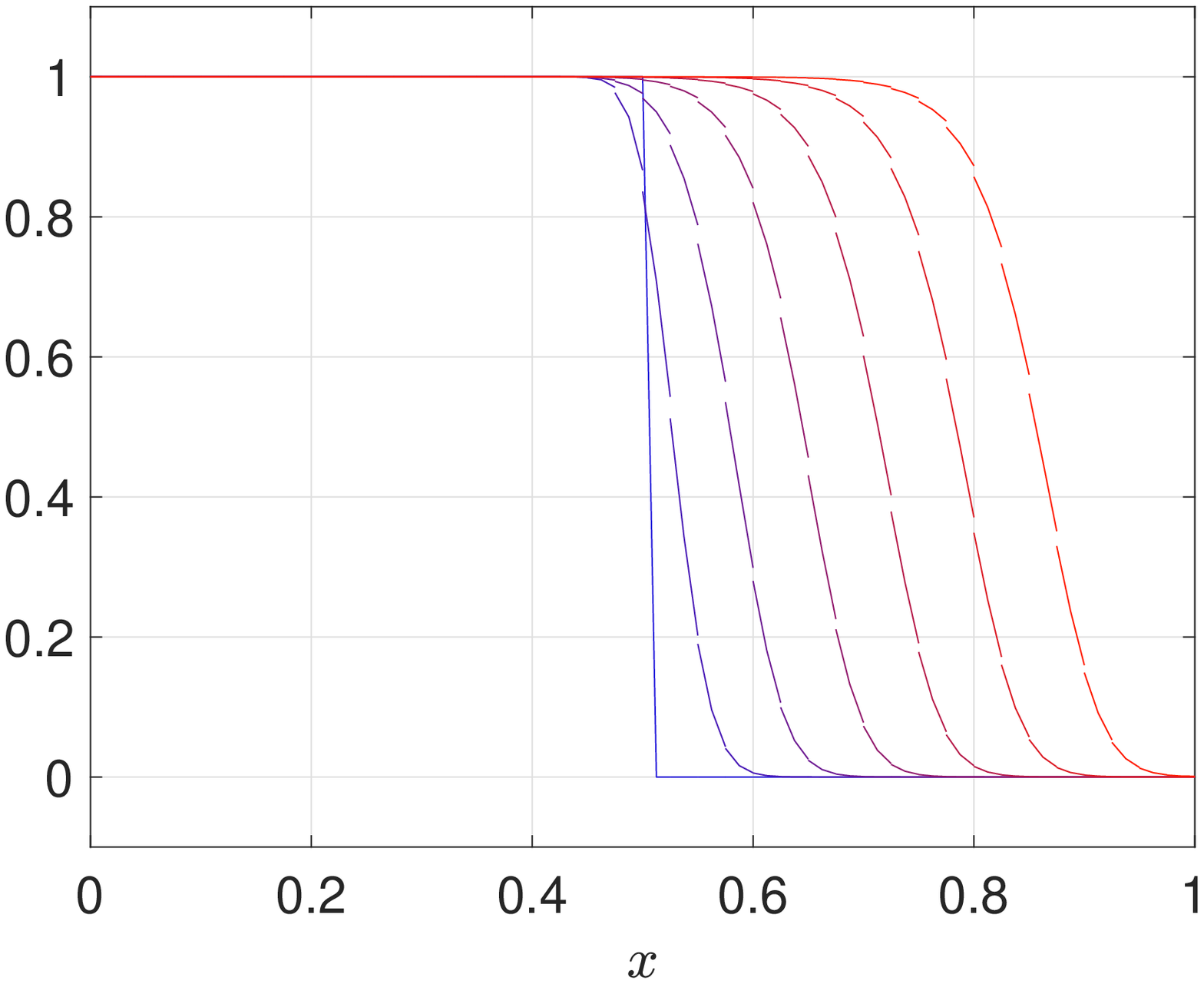}
\includegraphics[width=0.49\textwidth]{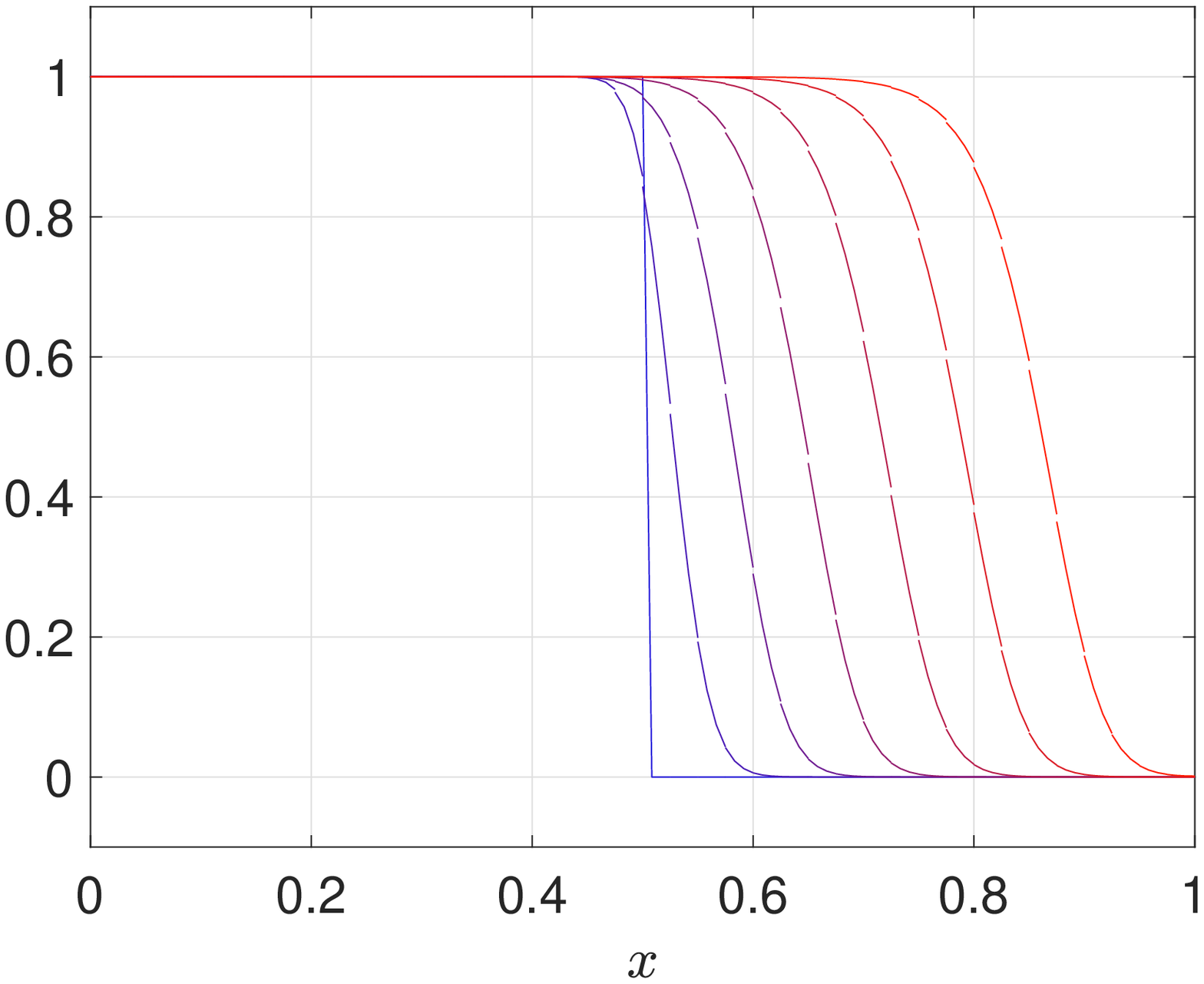}
\caption{Reference solution computed from the $P_1$ finite-element scheme
with $N_{\rm el}=300$ elements and $\triangle t=1/3$
(left top) and solutions computed from the DG scheme with $N_{\rm el}=40$ elements,
$\triangle t=1/3$, and polynomial order $p=1$ (right top), $p=2$ (left bottom),
and $p=3$ (right bottom). The initial datum is $u_0(x)=1$ for $0<x<1/2$ and
$u_0(x)=10^{-16}$ else.}
\label{fig.dg.u1}    
\end{figure}
   

\subsection{Entropy decay}

Proposition \ref{prop.decay} shows that the discrete entropy $S_h^k$ decays
exponentially fast if $S_h^0<|\Omega|=1$. To illustrate this behavior numerically,
we consider the one-group model with initial condition $u_0(x)=1$ for $0<x<1/2$
and $u_0(x)=10^{-16}$ else. 
Figure \ref{fig.ent} (left) shows that there are two different
slopes. For small times, the entropy decay is rather slow. When the time step
$k$ is sufficiently large such that $\exp(\lambda_h^k)>\eps$
for some $\eps>0$, the reaction dominates, and the entropy decay becomes faster. 
This behavior becomes even more apparent in the case of pure diffusion (i.e.\
without reaction terms), illustrated in Figure \ref{fig.ent} (right).
We remark that in this situation, the total mass is conserved numerically.

\begin{figure}[ht]
\includegraphics[width=0.49\textwidth]{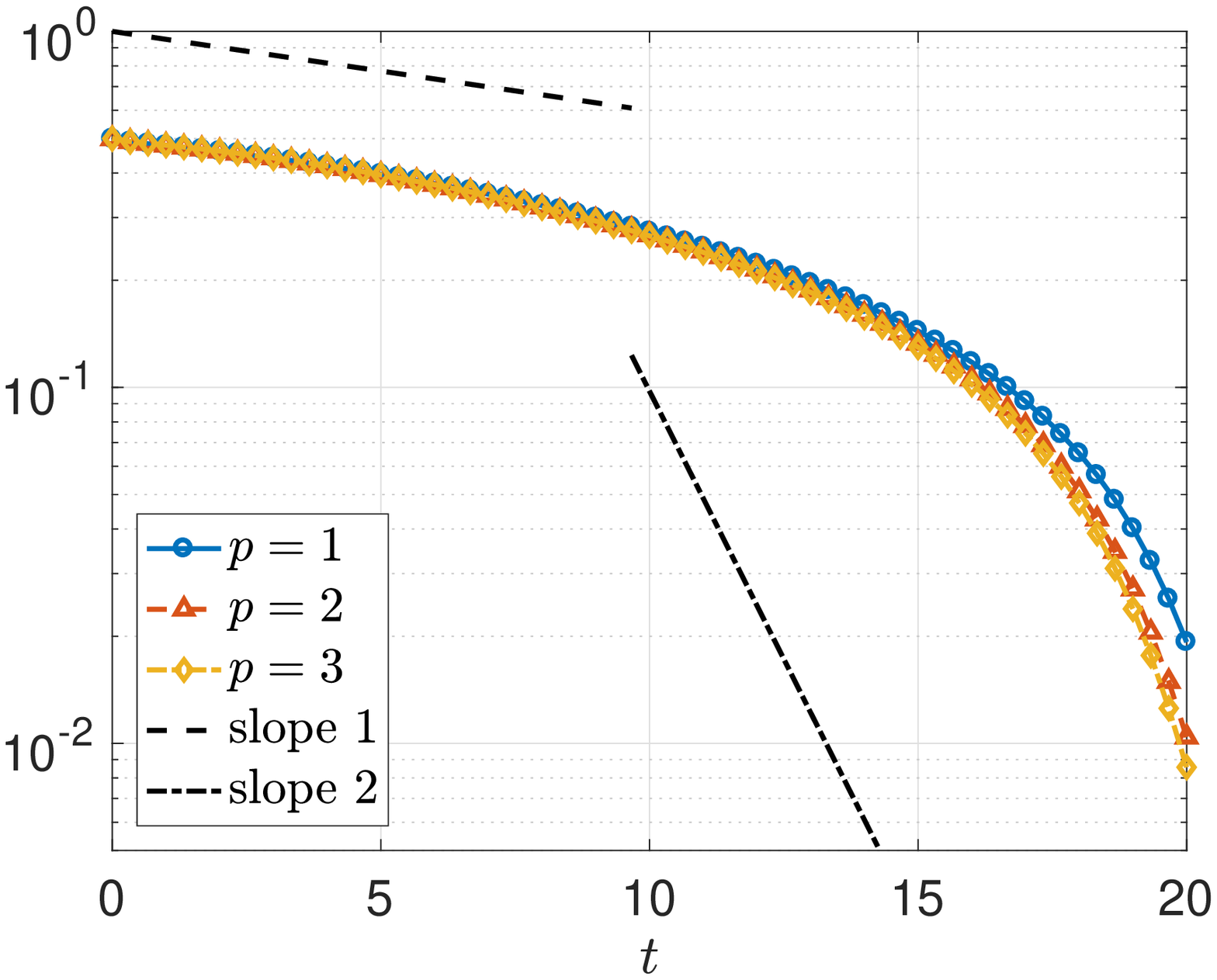}
\includegraphics[width=0.49\textwidth]{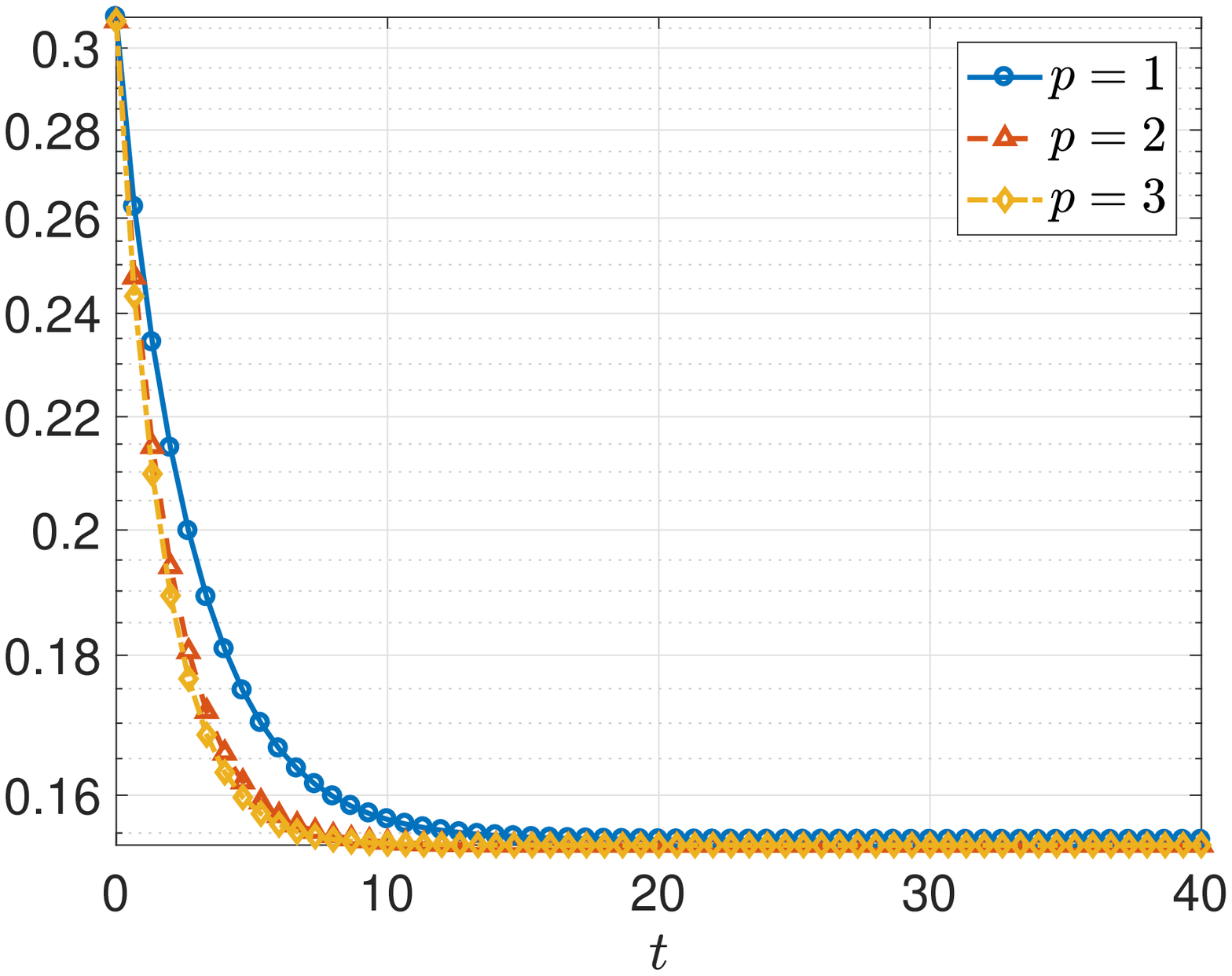}
\caption{Left: Entropy decay for the one-group model. The reference slopes 
are $t\mapsto 0.95^t$ (slope 1) and $t\mapsto 0.5^t$ (slope 2). 
Right: entropy decay for the pure diffusion equation. Both figures are in
semi-log scale.}
\label{fig.ent}
\end{figure} 

Figure \ref{fig.ent2} shows the entropy decay in semi-log scale for the
initial data $u_0(x)=n$ for $0<x<1/n$ and $u_0(x)=10^{-16}$ else for $n=3,6,12$.
Then
$$
	S_h^0 = \int_{0}^{\frac{1}{n}}(n\log(n)-n+1) dx +\int_{\frac{1}{n}}^1 1dx =\log n
$$
is larger than $|\Omega|=1$ if $n>e$.
We observe a region in which the decay rate is very small and it becomes
smaller when $n$ (and $S_h^0$) increases. This indicates that the assumption
$S_0^h<|\Omega|$ is not just technical to derive exponential time decay.

\begin{figure}[ht]
\includegraphics[width=0.49\textwidth]{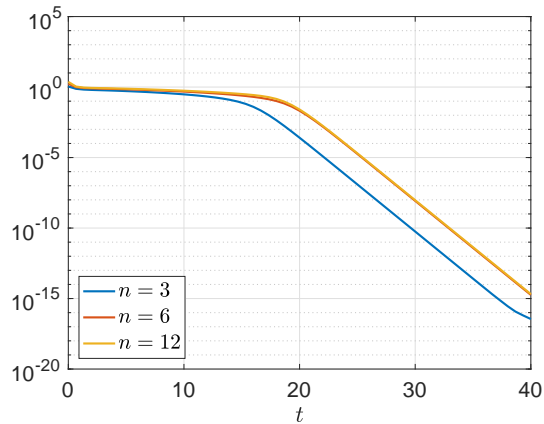}
\caption{Entropy decay for the one-group model with the initial datum
$u_0(x)=n$ for $0<x<1/n$ and $u_0(x)=10^{-16}$ else.}
\label{fig.ent2}
\end{figure} 


\subsection{Traveling waves}

We are looking for traveling-wave solutions to \eqref{3.eq} with $D=1$.
Setting $u(x,t)=\phi(s)$ with
$s=x-ct$, the Fisher-KPP equation can be rewritten as a system of first-order
differential equations:
\begin{equation}\label{3.tw}
  \phi' = -c\phi + \psi(\psi-1), \quad \psi' = \phi, \quad t>0.
\end{equation}
We choose the initial data $\phi(0)=1$ and $\psi(0)=-10^{-10}$. The 
(reference) traveling-wave
solution $\phi(s)$, computed from \eqref{3.tw} using the Matlab command
{\tt ode45}, is compared in Figure \ref{fig.tw} with the DG solution 
$\exp(\lambda_h^k)$, computed from the DG scheme \eqref{dg}, and the 
continuous $P_1$ finite-element solution $u_h$. The solutions are shown
at the time instances $t=0$, $t=T/6$, $t=T/3$, and $t=T/2$ (with $T=20$).
Both approximations are
diffusive, i.e., the traveling-wave speed is overestimated by the DG and
finite-element solutions. On the finer mesh with $N_{\rm el}=80$ elements,
the DG solution is clearly less diffusive compared to the other discrete solutions
on the coarser mesh with $N_{\rm el}=50$ elements. Better approximations are
expected by using a higher-order time discretization. Structure preservation
of higher-order temporal approximations is a delicate topic (see, e.g., 
\cite{JuSc17}) and will be studied in a future work. Also an error analysis 
is postponed to a future work.

\begin{figure}[ht]
\includegraphics[width=0.49\textwidth]{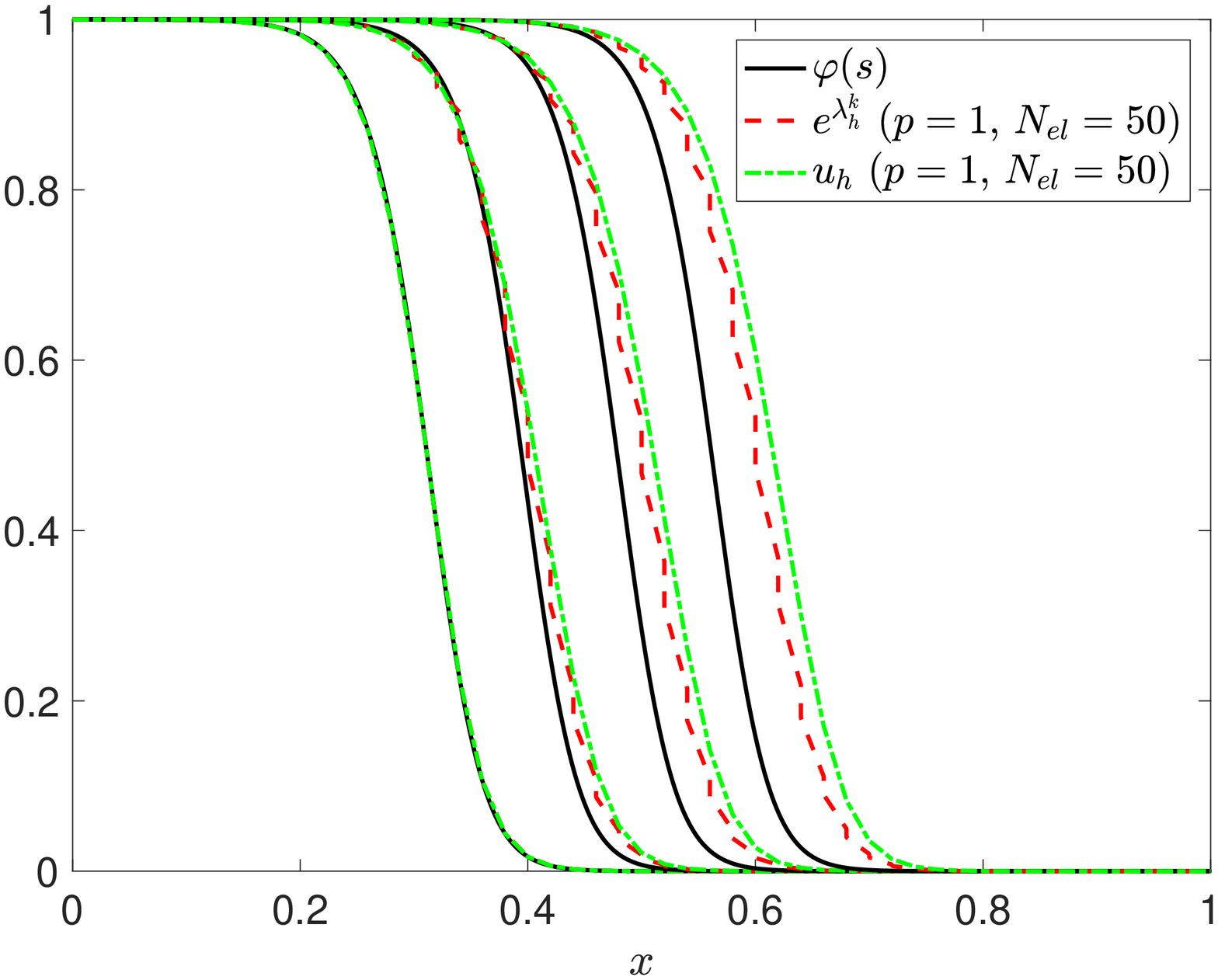}
\includegraphics[width=0.49\textwidth]{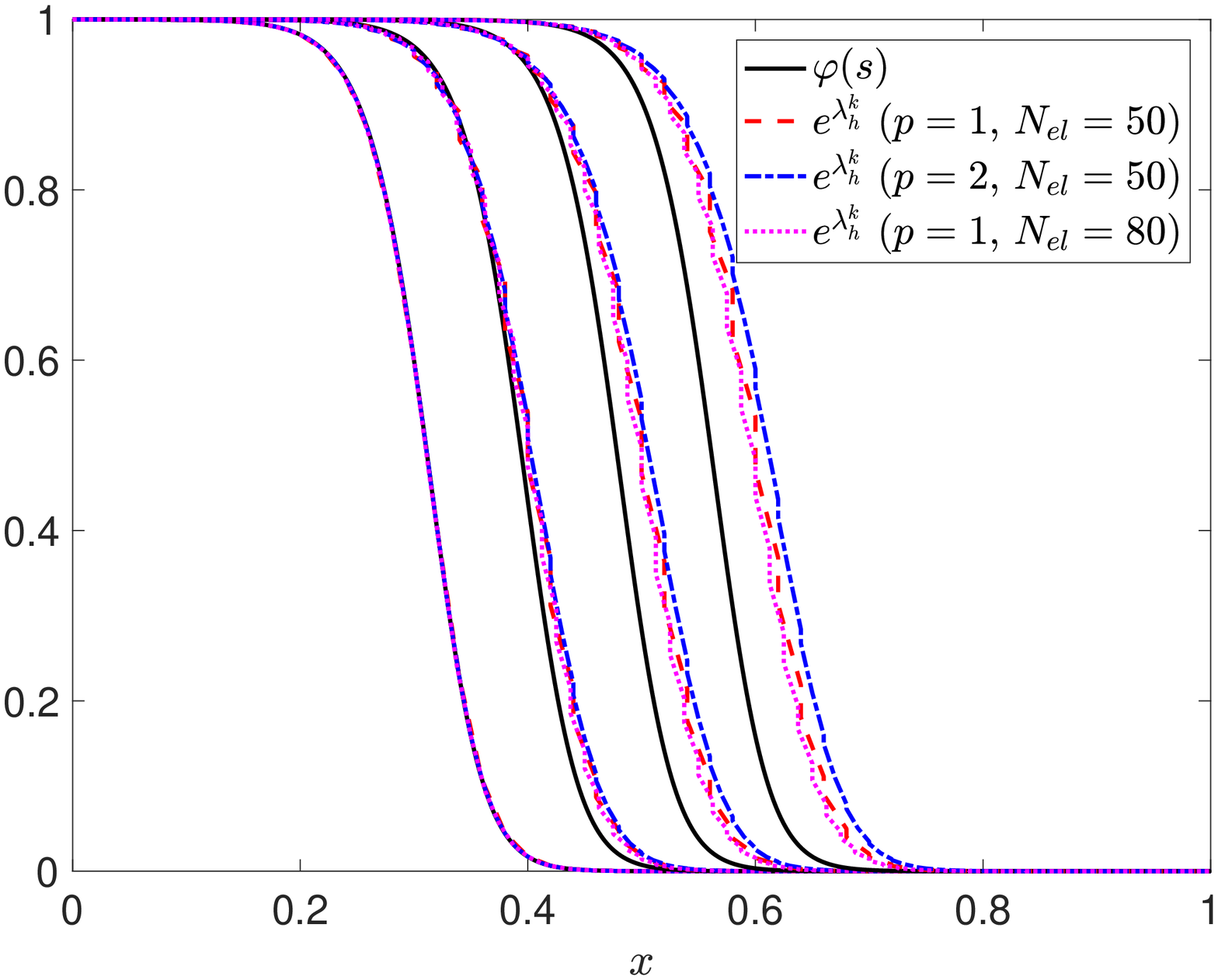}
\caption{Comparison of the traveling-wave solution $\phi(s)$, the DG solution
$\exp(\lambda_h^k)$ (with $N_{\rm el}=50$ or $N_{\rm el}=80$ and $\triangle t=1/3$), 
and the finite-element solution $u_h$. }
\label{fig.tw}
\end{figure} 


\begin{appendix}
\section{Linear elements: Existence of solutions for $\eps=0$}\label{app}

We show that the regularization term $\eps\int_\Omega \lambda_h^k\phi_h dx$
in the DG scheme \eqref{dg} is not needed if we consider linear elements.
 
\begin{proposition}[Existence for $p=1$]\label{prop.p1}
Let $p=1$, $\lambda_h^{k-1}\in V_h$, and $S_0^h<|\Omega|$. 
Then there exists a solution $\lambda_h^k\in V_h$ to \eqref{dg} with $\eps=0$.
\end{proposition}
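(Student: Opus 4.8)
The plan is to obtain $\lambda_h^k$ as a limit of the regularised solutions. By Proposition~\ref{prop.ex}, for every $\eps>0$ there is a solution $\lambda_h^{k,\eps}\in V_h$ of \eqref{dg}, and the whole point is to derive bounds on $\lambda_h^{k,\eps}$ that are \emph{uniform in $\eps$} so that we may pass to the limit $\eps\to0$; the bound \eqref{2.L2} degenerates as $\eps\to0$, which is exactly why $p=1$ requires a separate argument. First I would collect the $\eps$-independent estimates. Testing \eqref{dg} with $\phi_h=\lambda_h^{k,\eps}$ and arguing as in the proof of Lemma~\ref{lem.epi} (valid for $\eps\ge0$) gives
$$
  S_h^{k,\eps} + \triangle t\,B(\lambda_h^{k,\eps};\lambda_h^{k,\eps},\lambda_h^{k,\eps})
	+ \eps\|\lambda_h^{k,\eps}\|_{L^2(\Omega)}^2
	+ \triangle t\int_\Omega e^{\lambda_h^{k,\eps}}(e^{\lambda_h^{k,\eps}}-1)\lambda_h^{k,\eps}\,dx
	\le S_h^{k-1},
$$
with all four terms on the left nonnegative. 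Via Lemma~\ref{lem.dgnorm} and estimate \eqref{2.aux2} this bounds $\|e^{\lambda_h^{k,\eps}/2}\|_{\rm DG}$, the penalised jump quantity $\sum_{f\in\E_h}\int_f\frac{p^2}{\tt h}\alpha(\lambda_h^{k,\eps})|\jump{\lambda_h^{k,\eps}}|^2ds$, and $\int_\Omega e^{\lambda_h^{k,\eps}}(e^{\lambda_h^{k,\eps}}-1)\lambda_h^{k,\eps}\,dx$ uniformly in $\eps$, while $\eps\|\lambda_h^{k,\eps}\|_{L^2(\Omega)}^2\le S_h^{k-1}$ is non-uniform but harmless. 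Since $S_h^{k,\eps}\le S_h^{k-1}\le S_h^0<|\Omega|$, the proof of Lemma~\ref{lem.mass} applies and yields
$$
  0 < |\Omega|\,\sigma_-\!\big(\tfrac{S_h^0}{|\Omega|}\big)
	\le \int_\Omega e^{\lambda_h^{k,\eps}}\,dx \le |\Omega|\,\sigma_+\!\big(\tfrac{S_h^0}{|\Omega|}\big).
$$

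The crux is to upgrade these to an $\eps$-uniform bound on $\lambda_h^{k,\eps}$ itself; since $h$ is fixed, $V_h$ is finite dimensional, so it suffices to bound $\|\lambda_h^{k,\eps}\|_{L^\infty(\Omega)}$, and this can only fail if $\lambda_h^{k,\eps}\to-\infty$ on some part of $\Omega$ along a subsequence $\eps=\eps_n\to0$. This is where $p=1$ enters: $\na\lambda_h^{k,\eps}$ is \emph{constant} on each element $K$, so
$$
  \int_K |\na e^{\lambda_h^{k,\eps}/2}|^2\,dx = \tfrac14\,|\na\lambda_h^{k,\eps}|_K|^2\int_K e^{\lambda_h^{k,\eps}}\,dx .
$$
Hence the gradient bound forces $|\na\lambda_h^{k,\eps}|$ to be bounded on any element carrying a fixed amount of mass, while the upper mass bound gives $\min_K\lambda_h^{k,\eps}\le\log\big(|\Omega|\sigma_+(S_h^0/|\Omega|)/|K|\big)$; since the total mass is bounded below, some element $K^\circ$ carries at least $|\Omega|\sigma_-(S_h^0/|\Omega|)/N_h$ of mass, and on it the two facts combine to give $\|\lambda_h^{k,\eps}\|_{L^\infty(K^\circ)}\le C$ uniformly (pass to a subsequence so that $K^\circ$ is fixed). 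I would then propagate this $L^\infty$ bound to every element along a path in the mesh, using connectedness of $\Omega$ together with the $\eps$-uniform bounds on $\int_f\frac{p^2}{\tt h}|\jump{e^{\lambda_h^{k,\eps}/2}}|^2ds$ and on $\int_f\frac{p^2}{\tt h}\alpha(\lambda_h^{k,\eps})|\jump{\lambda_h^{k,\eps}}|^2ds$: across a face separating $K$ from an element on which $\lambda_h^{k,\eps}$ is already controlled, the first bound keeps the trace of $e^{\lambda_h^{k,\eps}/2}$ from $K$ bounded in $L^2$, and — because $\alpha(\lambda_h^{k,\eps})$ blows up like $e^{\|\lambda_h^{k,\eps}\|_{L^\infty(K)}}$ when $\lambda_h^{k,\eps}|_K$ degenerates — the second bound prevents $\lambda_h^{k,\eps}|_K$ from tending to $-\infty$. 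This gives $\|\lambda_h^{k,\eps}\|_{L^\infty(\Omega)}\le C$ independently of $\eps$, and then a subsequence converges in $V_h$ to some $\lambda_h^k$; passing to the limit in \eqref{dg} is routine — every term depends continuously on $\lambda_h$ on $V_h$ once a uniform $L^\infty$ bound is in place (in particular $\alpha(\cdot)$ and $B(\cdot;\cdot,\cdot)$), and the regularisation term drops out since $\eps_n\|\lambda_h^{k,\eps_n}\|_{L^2(\Omega)}\le(\eps_n S_h^{k-1})^{1/2}\to0$. Thus $\lambda_h^k$ solves \eqref{dg} with $\eps=0$.

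I expect the propagation step to be the main obstacle: the jump terms give only $L^2$-type control on faces, so transferring a two-sided $L^\infty$ bound from one element to a neighbour — excluding thin spikes or thin near-zero layers on the shared face — requires some care and elementary geometry of affine functions on a simplex. A clean alternative is a contradiction argument with the rescaling $\lambda_h^{k,\eps_n}=t_n\mu_n$, $t_n=\|\lambda_h^{k,\eps_n}\|\to\infty$, $\|\mu_n\|=1$, $\mu_n\to\mu$ in $V_h$: the gradient identity forces $\mu$ to be piecewise constant, the bound $S_h^{k,\eps_n}\le S_h^{k-1}$ forces $\mu\le0$ on each element, and the positive lower bound on $\int_\Omega e^{\lambda_h^{k,\eps_n}}\,dx$ forbids $\mu<0$ on all of $\Omega$; by connectedness there is then a face between a ``$\mu<0$'' element and a ``$\mu=0$'' element, and testing \eqref{dg} against the indicator of the ``$\mu<0$'' region — on which $e^{\lambda_h^{k,\eps_n}}\to0$, so that $B(\cdot;\cdot,\cdot)$ reduces to interface flux and jump terms — leaves the fixed nonzero contribution $\int e^{\lambda_h^{k-1}}$ unbalanced, the contradiction following from the $\eps$-uniform jump estimate and the $p=1$ gradient identity (combined, if needed, with a finite iteration of the rescaling on the ``$\mu=0$'' subdomain). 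Either route reduces in the end to a finite induction over the fixed mesh.
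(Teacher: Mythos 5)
Your overall architecture is the same as the paper's: collect the $\eps$-uniform entropy, coercivity and mass estimates; exploit $p=1$ so that $\na\lambda_\eps$ is constant on each element and deduce, on an element carrying at least a fraction $\delta/N$ of the total mass, a uniform $L^\infty$ bound (mean value plus the weighted gradient estimate); then propagate this bound over the mesh by a finite induction and pass to the limit in the finite-dimensional space $V_h$. Your preliminary estimates and the existence of one well-controlled element correspond to the paper's Steps 1--2 and are correct.

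The gap is the propagation step, which you yourself flag as ``the main obstacle'' but do not carry out --- and this is where the paper's proof does most of its work (Steps 3--5). Your stated mechanism, namely that the boundedness of $\sum_{f}\int_f\frac{p^2}{{\tt h}}\alpha(\lambda_\eps)|\jump{\lambda_\eps}|^2\,ds$ together with the blow-up of $\alpha$ ``prevents $\lambda_\eps|_{K_+}$ from tending to $-\infty$'', only controls the \emph{trace} of $\lambda_+$ on the shared face $f$ (this is the paper's Step 3, via equivalence of norms on the finite-dimensional trace space, and it gives both an upper and a lower bound there). For an affine function on a simplex this pins down the $d$ vertex values lying on $f$ but says nothing about the vertex $e_0$ opposite $f$: the value $a_0^\eps=\lambda_+(e_0)$ can a priori tend to $-\infty$ with all of your collected estimates intact, because the weighted gradient bound $\int_{K_+}e^{\lambda_\eps}|\na\lambda_\eps|^2\,dx\le M'$ degenerates exactly where $e^{\lambda_\eps}$ is small. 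Excluding this requires an additional idea. The paper's Step 5 supplies it: the elementwise Gauss--Green identity $\sum_{j}\int_{f_j}e^{\lambda_\eps}\na\lambda_\eps\cdot n\,ds=\int_{K_+}e^{\lambda_\eps}|\na\lambda_\eps|^2\,dx$ (the Laplacian of an affine function vanishes), combined with the observation that all contributions except the one through $f_0$ are $o(1)$ as $a_0^\eps\to-\infty$, forces the flux through $f_0$ to stay bounded; since $e^{\lambda_\eps}\ge e^{-C_\mu}$ on $f_0$ and $\na\lambda_\eps\cdot n$ is constant, this bounds $a_0^\eps$ from below (the upper bound comes from the mass bound). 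Without an argument of this kind your induction over the mesh does not close. Your alternative rescaling argument has the same lacuna in a different guise: on an interface between the ``$\mu<0$'' and ``$\mu=0$'' regions you would need to control $\aver{e^{\lambda_\eps}\na\lambda_\eps}$ and $\alpha(\lambda_\eps)$ from the ``$\mu=0$'' side, where $\lambda_\eps$ may still diverge at a rate $o(t_n)$, and the ``finite iteration of the rescaling'' you invoke to handle this is not spelled out.
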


\begin{proof}
The proof is based on the idea of the proof of \cite[Lemma 3.10]{ACC18}.
Let $\eps>0$ and let $\lambda_h^k\in V_h$ be a solution to \eqref{dg} given by
Proposition \ref{prop.ex}. In order to emphasize the dependency on $\eps$,
we write $\lambda_\eps:=\lambda_h^k$. Our goal is to derive an $\eps$-uniform
$L^\infty(\Omega)$ bound for $\lambda_\eps$. 

{\em Step 1:} We derive first some estimates for $e^{\lambda_\eps}$.
Lemma \ref{lem.mass} shows that
\begin{equation}\label{a.est1}
  \delta \le \int_\Omega e^{\lambda_\eps}dx\le M,
\end{equation}
where $\delta=\sigma_-(S_h^0/|\Omega|)$ and $M=\sigma_+(S_h^0/|\Omega|)$.
Since we assumed that $S_h^0<|\Omega|$, we have $\delta>0$. 
Using the coercivity estimate \eqref{2.aux2}, the inequality \eqref{2.epiB},
and $e^{\lambda_\eps}(e^{\lambda_\eps}-1)\lambda_\eps\ge 0$, we find that
$$
  S_h^k + \frac{\triangle t}{2}\sum_{K\in\T_h}\int_K e^{\lambda_\eps}
	|\na\lambda_\eps|^2 dx + \frac{\triangle t}{3}\sum_{f\in\E_h}\int_f
	\frac{p^2}{{\tt h}}\alpha(\lambda_\eps)|\jump{\lambda_\eps}|^2 ds
	\le S^{k-1}_h \le S_h^0,
$$
where we used in the last step the monotonicity of $k\mapsto S_h^k$, 
guaranteed by Lemma \ref{lem.epi}. Consequently,
\begin{equation}\label{a.est2}
  \sum_{K\in\T_h}\int_K e^{\lambda_\eps}|\na\lambda_\eps|^2 dx
	+ \sum_{f\in\E_h}\int_f\frac{p^2}{{\tt h}}\alpha(\lambda_\eps)
	|\jump{\lambda_\eps}|^2 ds \le M':= \frac{3S_h^0}{\triangle t}.
\end{equation}

{\em Step 2:} We claim that for any $\eps>0$, there exists an element $K_\eps\in\T_h$
and a constant $\mu>0$, independent of $\eps$, such that
\begin{equation}\label{a.step2}
  \|\lambda_\eps\|_{L^\infty(K_\eps)} \le \mu.
\end{equation}
For the proof, let $N\in\N$ be the number of elements in $\T_h$. The lower and
upper bounds \eqref{a.est1} imply the existence of an element $K_\eps\in\T_h$
such that
\begin{equation}\label{a.aux1}
  \frac{\delta}{N}\le \int_{K_\eps} e^{\lambda_\eps}dx \le M.
\end{equation}
By the mean-value theorem, there exists $x_\eps\in K_\eps$ such that
$$
  e^{\lambda_\eps(x_\eps)} = \frac{1}{|K_\eps|}\int_{K_\eps}e^{\lambda_\eps(x)}dx.
$$
Then \eqref{a.aux1} gives
$$
  |\lambda_\eps(x_\eps)| = \bigg|\log\bigg(\frac{1}{|K_\eps|}\int_{K_\eps}
	e^{\lambda_\eps(x)}dx\bigg)\bigg|
	\le \max\bigg\{\bigg|\log\frac{M}{|K_\eps|}\bigg|,\log
	\frac{\delta}{N|K_\eps|}\bigg|\bigg\}.
$$
Since $\lambda_\eps$ is a polynomial of degree one on $K_\eps$, by assumption,
its gradient is constant on $K_\eps$, and we deduce from \eqref{a.est2} and
\eqref{a.aux1} that
$$
  |\na\lambda_\eps|^2 = \frac{\int_{K_\eps}e^{\lambda_\eps}|\na\lambda_\eps|^2dx}{
	\int_{K_\eps}e^{\lambda_\eps}dx}
	\le \frac{M'N}{\delta} \quad\mbox{on }K_\eps.
$$
Combining the last two estimates, it follows for $x\in K_\eps$ that
\begin{align*}
  |\lambda_\eps(x)|&\le |\lambda_\eps(x_\eps)| 
	+ |x-x_\eps|\int_0^1|\na\lambda_\eps(x\eps+\theta(x-x_\eps)|d\theta \\
  &\le \max\bigg\{\bigg|\log\frac{M}{|K_\eps|}\bigg|,\log
	\frac{\delta}{N|K_\eps|}\bigg|\bigg\} + \frac{M'N}{\delta} =: \mu,
\end{align*}
which shows the claim. 

{\em Step 3:} We wish to prove a uniform $L^\infty$ bound for $\lambda_\eps$ 
on the faces or edges of $K_\eps$. Let $\mu>0$ and $K_\eps\in\T_h$ such that
$\|\lambda_\eps\|_{L^\infty(K_\eps)}\le\mu$. Set $K_-:=K_\eps$ and consider
neighboring elements $K_+\in\T_h$ satisfying $f\in \pa K_-\cap\pa K_+\neq\emptyset$. 
Furthermore, let $\lambda_\pm=\lambda_\eps|_{K_\pm}$. We claim that there 
exists $C_\mu>0$, independent of $\eps$, such that
\begin{equation}\label{a.step3}
  \|\lambda_+\|_{L^\infty(f)}\le C_\mu.
\end{equation}
The idea is to prove an $L^2(f)$ estimate for $\lambda_\eps$, as the equivalence
of all norms in the finite-dimensional setting then implies the desired
$L^\infty(f)$ bound.

Observe that
$$
  \max\big\{(e^{\lambda_\eps})_-,(e^{\lambda_\eps})_+\big\}
	\ge (e^{\lambda_\eps})_- \ge \exp(-\|\lambda_-\|_{L^\infty(f)})
	\ge \exp(-\|\lambda_-\|_{L^\infty(K_-)}) \ge \exp(-\mu).
$$
Then we can estimate the stabilization function $\alpha$ according to
$$
  \alpha(\lambda_\eps) \ge \frac32 C_{\rm inv}^2 
	\exp(-\|\lambda_-\|_{L^\infty(K_-)})^2\exp(\|\lambda_-\|_{L^\infty(K_-)}
	\ge \frac32 C_{\rm inv}^2 e^{-\mu}.
$$
To estimate the $L^2(f)$ norm of $\lambda_-|_f$, we use the inequality
$|\lambda_+|\le |\lambda_+ - \lambda_-| + |\lambda_-| = |\jump{\lambda_\eps}| 
+ |\lambda_\eps|$ on $f$. Then \eqref{a.step2} yields
$$
  \int_f|\lambda_+|^2 ds \le 2\int_f\big(|\lambda_\eps|^2 + |\jump{\lambda_\eps}|^2
	\big)ds 
	\le 2|f|\mu^2 + \frac{4e^{\mu}}{3C_{\rm inv}^2}\int_f\alpha(\lambda_\eps)
	|\jump{\lambda_\eps}|^2 ds =: \beta_\mu,
$$
and $\beta_\mu$ is uniform in $\eps$ (but not in $h$) in view of \eqref{a.est2}.
We conclude that
$$
  \|\lambda_\eps\|_{L^\infty(f)} \le C\|\lambda_\eps\|_{L^2(f)}
	\le C\beta_\mu^{1/2}.
$$

{\em Step 4:} Denote by $(\phi_0,\ldots,\phi_d)$ the basis of $P_1(K_+)$ such that
$\phi_i(e_j)=\delta_{ij}$ for $i$, $j=0,\ldots,d$, where the vertices $e_i$
of $K_+$ are ordered in such a way that $e_0\not\in f$. Then we can formulate
$\lambda_+$ on $K_+$ as
$$
  \lambda_+(x) = \sum_{i=0}^d a_i^\eps\phi_i(x), \quad x\in K_+,
$$
where $a_i^\eps=\lambda_+(e_i)$. Estimate \eqref{a.step3} shows that
$\lambda_+(e_i)$ is uniformly bounded at the vertices $a_1,\ldots,a_d$ of $K_+$, i.e.\
$|a_i^\eps|\le C_\mu$ for all $i=1,\ldots,d$.

{\em Step 5:} We wish to estimate the remaining vertex $e_0^\eps$ that is not
an element of $K_-=K_\eps$.
We claim that there exist constants $L_\mu\le U_\mu$, being independent of $\eps$, 
such that
$$
  L_\mu \le a_0^\eps \le U_\mu.
$$
We first prove the upper bound. Using the bound for $a_i^\eps$ for $i=1,\ldots,d$,
we have
$$
  \lambda_+ \ge a_0^\eps\phi_0 - \sum_{i=1}^d|a_i^\eps||\phi_i|
	\ge a_0^\eps\phi_0 - C_\mu\sum_{i=1}^d|\phi_i| \ge a_0^\eps\phi_0 - C_\mu d.
$$
If $a_0^\eps\le 0$, there is nothing to show. Otherwise, it follows from
\eqref{a.aux1} that
$$
  M\ge \int_{K_+} \exp(\lambda_+)dx 
	\ge \int_{K_+}\exp(a_0^\eps\phi_0 - C_\mu d)dx 
	\ge e^{-C_\mu d}\int_{K_+}a_0^\eps\phi_0 dx.
$$
Then, setting $c_0=\int_{K_+}\phi_0dx$, we infer that
$a_0^\eps\le Me^{C_\mu d}/c_0 =: U_\mu$.

The proof of the lower bound is more involved. Let $f_0$ be the face or edge
that is opposite of the vertex $e_0$, and let $f_1,\ldots,f_d$ be the remaining
faces or edges. For later use, we note that the integrals
\begin{align*}
  I(b) &:= \int_{K_+}\bigg(|b\na\phi| + C_\mu\sum_{i=1}^d|\na\phi_i|
	\bigg)^2 e^{b\phi_0 + C_\mu d}dx, \\
	J(b) &:= \sum_{j=1}^d\int_{f_j}\bigg(|b\na\phi| 
	+ C_\mu\sum_{i=1}^d|\na\phi_i|\bigg)^2 e^{b\phi_0 + C_\mu d}dx
\end{align*}
converge to zero as $b\to-\infty$, so there exists $L'_\mu\in\R$ such that
\begin{equation}\label{a.L}
  I(b) + J(b) \le 1 \quad\mbox{for all }b\le L'_\mu.
\end{equation}

We estimate
$$
  |\na\lambda_+\cdot n|\ge |a_0^\eps||\na\phi_0\cdot n|
	- \sum_{i=1}^d |a_i^\eps||\na\phi_0\cdot n|
	\ge |a_0^\eps||\na\phi_0\cdot n| - C_\mu d\max_{i=1,\ldots,d}|\na\phi_i|.
$$
As we assumed that  $p=1$, the expression $|\na\lambda_\eps\cdot n|$ is constant. 
Thus, since $|\na\phi_0\cdot n|>0$ and $\lambda_\eps\ge -C_\mu$ on $f_0$,
by \eqref{a.step3}, the previous inequality gives
\begin{align}
  |a_0^\eps| &\le \frac{1}{|\na\phi_0\cdot n|}\big(|\na\lambda_\eps\cdot n|
	+ C_\mu d\max_{i=1,\ldots,d}|\na\phi_i|\big) \nonumber \\
	&\le \frac{1}{|\na\phi_0\cdot n|}\bigg(\frac{e^{C_\mu}}{|f_0|}\bigg|\int_{f_0}
	e^{\lambda_\eps}\na\lambda_\eps\cdot n ds\bigg| 
	+ C_\mu d\max_{i=1,\ldots,d}|\na\phi_i|\bigg). \label{a.aux2}
\end{align}
An integration by parts leads to
\begin{align*}
  \int_{f_0}e^{\lambda_\eps}\na\lambda_\eps\cdot n ds
	+ \sum_{i=1}^d\int_{f_i}e^{\lambda_\eps}\na\lambda_\eps\cdot n ds
	&= \int_{K_+}e^{\lambda_\eps}\Delta\lambda_\eps dx 
	+ \int_{K_+}e^{\lambda_\eps}|\na\lambda_\eps|^2 dx \\
  &= \int_{K_+}e^{\lambda_\eps}|\na\lambda_\eps|^2 dx,
\end{align*}
since $\lambda_\eps$ is linear on $K_+$, so the Laplacian vanishes.
Hence, using 
$$
  \lambda_+ \le a_0^\eps\phi_0 + \sum_{i=1}^d |a_i^\eps||\phi_i|
	\le a_0^\eps\phi_0 + C_\mu d.
$$
and \eqref{a.L}, we have
\begin{align*}
  \bigg|\int_{f_0}e^{\lambda_\eps}\na\lambda_\eps\cdot n ds\bigg| 
	\le I(a_0^\eps) + J(a_0^\eps) \le 1
\end{align*}
if we choose $a_0^\eps\le L_\mu'$. Inserting this information into \eqref{a.aux2},
it follows for all $a_0^\eps\le L_\mu'$ that
$$
  |a_0^\eps| \le \frac{1}{|\na\phi_0\cdot n|}\bigg(\frac{e^{C_\mu}}{|f_0|}
	+ C_\mu d\max_{i=1,\ldots,d}|\na\phi_i|\bigg) =: -L_\mu''.
$$
Thus, setting $L_\mu=\min\{L_\mu',L_\mu''\}$, we conclude that $a_0^\eps\ge L_\mu$.

{\em Step 6:} Combining the previous steps, we infer that there exists a constant
$g(\mu)>0$ such that 
$$
  \|\lambda_\eps\|_{L^\infty(K_+)} \le g(\mu).
$$
This estimate means that if $\lambda_\eps$ is bounded in some element with 
constant $\mu$, then $\lambda_\eps$ is bounded in the neighboring elements
with constant $g(\mu)$. Now, take an arbitrary element $K\in\T_h$. Then there
exists a finite sequence $K^0,K^1,\ldots,K^m$ of elements with $K^0=K_\eps$
and $K^m=K$ such that $K^{j-1}$ and $K^j$ are neighboring elements. 
Repeating the arguments of Steps 3-5, the bound
$\|\lambda_\eps\|_{L^\infty(K^1)}\le \mu':=g(\mu)$ implies that
$\|\lambda_\eps\|_{L^\infty(K^2)}\le g(\mu')=g(g(\mu))$. Thus, by iteration,
$$
  \|\lambda_\eps\|_{L^\infty(K)} 
	\le \underbrace{(g\circ\cdots\circ g)}_{\text{$m$ times}}(\mu).
$$
The upper bound is independent of $\eps$ and holds for all elements $K\in\T_h$. 
Consequently, $(\lambda_\eps)$ is bounded in $L^\infty(\Omega)$.

We deduce that there exists a subsequence (not relabeled) such that 
$\lambda_\eps\to\lambda$ strongly in $L^\infty(\Omega)$, recalling that
$V_h$ is finite-dimensional. In fact, the convergence holds in any norm. 
Thus, we can pass to the limit
$\eps\to 0$ in \eqref{dg}, and the limit equation is the same as \eqref{dg}
with $\eps=0$.
\end{proof}
\end{appendix}



\begin{thebibliography}{11}
\bibitem{ACC18} A.~Ait Hammou Oulhaj, C.~Canc\`es, and C.~Chainais-Hillairet.
Numerical analysis of a nonlinearly stable and positive control volume finite element 
scheme for Richards equation with anisotropy.
{\em ESAIM: Math. Model. Numer. Anal.} 52 (2018), 1533-1567.

\bibitem{Arn82} D.~Arnold. An interior penalty finite element method for discontinuous
elements. {\em SIAM J. Numer. Anal.} 19 (1982), 742-760.

\bibitem{ABCM02} D.~Arnold, F.~Brezzi, B.~Cockburn, and L.~Marini. Unified analysis of 
discontinuous Galerkin methods for elliptic problems. 
{\em SIAM J. Numer. Anal.} 39 (2002), 1749-1779.

\bibitem{AMTU00} A.~Arnold, P.~Markowich, G.~Toscani, and A.~Unterreiter. On 
generalized Csisz\'ar-Kullback inequalities. {\em Monatsh. Math.} 131 (2000), 235-253.

\bibitem{Bak77}
G.~A.~Baker. Finite element methods for elliptic equations using nonconforming
elements. {\em Math. Comput.} 31 (1977), 45-59.

\bibitem{Bre11} H.~Br\'ezis. {\em Functional Analysis, Sobolev Spaces and
Partial Differential Equations}. Springer, New York, 2011.

\bibitem{BuOr09} A.~Buffa and C.~Ortner. Compact embeddings of broken Sobolev spaces 
and applications. {\em IMA J. Numer. Anal.} 29 (2009), 827-855.

\bibitem{BDPS10} M.~Burger, M.~Di Francesco, J.-F.~Pietschmann, and B.~Schlake. 
Nonlinear cross-diffusion with size exclusion. 
{\em SIAM J. Math. Anal.} 42 (2010), 2842-2871.

\bibitem{CNP12}
F.~Cavalli, G.~Naldi, and I.~Perugia. Discontinuous Galerkin approximation of 
relaxation models for linear and nonlinear diffusion equations. 
{\em SIAM J. Sci. Comput.} 34 (2012), A105-A136.

\bibitem{Cia78} P.~Ciarlet (ed.). {\em The Finite Element Method for Elliptic Problems}.
Studies Math. Appl., volume 4, pp.\ 1-530. North-Holland, Amsterdam, 1978.

\bibitem{EkTe72} I.~Ekeland and R.~Teman. {\em Convex Analysis and Variational 
Problems}. North-Holland, Amsterdam, 1972.

\bibitem{Fis37} R.~A.~Fisher. The wave of advance of advantageous genes. 
{\em Ann. Eugenics} 7 (1937), 353-369. 

\bibitem{Gaj85} H.~Gajewski. On existence, uniqueness and asymptotic behavior of
solutions of the basic equations for carrier transport in semiconductors.
{\em Z. Angew. Math. Mech.} 65 (1985), 101-108. 

\bibitem{GWK16} G.~Gassner, A.~Winters, and D.~Kopriva. Split form nodal 
discontinuous Galerkin schemes with summation-by-parts property for 
the compressible Euler equations. {\em J. Comput. Phys.} 327 (2016), 39-66. 

\bibitem{GuYa15}
L.~Guo and Y.~Yang. Positivity preserving high-order local discontinuous Galerkin 
method for parabolic equations with blow-up solutions. 
{\em J. Comput. Phys.} 289 (2015), 181-195.

\bibitem{HLW06} E.~Hairer, C.~Lubich, and G.~Wanner. {\em Geometric Numerical
Integration}. Springer, Berlin, 2006.

\bibitem{HSS17} S.~Hasnain, M.~Saqib, and D.~Suleiman Mashat. Numerical study of 
one dimensional Fishers KPP equation with finite difference schemes. 
{\em Amer. J. Comput. Math.} 7 (2017), 70-83.

\bibitem{HSS02} P.~Houston, C.~Schwab, and E.~S\"uli. Discontinuous $hp$-finite 
element methods for advection-diffusion-reaction problems. 
{\em SIAM J. Numer. Anal.} 39 (2002), 2133-2163.

\bibitem{Ili69} A.~M.~Il'in. A difference scheme for a differential equation with 
a small parameter multiplying the highest derivative. {\em Mat. Zametki} 6 (1969), 
237-248.

\bibitem{Jue16} A.~J\"ungel. {\em Entropy Methods for Diffusive Partial Differential
Equations}. BCAM Springer Briefs, Springer, 2016.

\bibitem{JuSc17} A.~J\"ungel and S.~Schuchnigg. Entropy-dissipating semi-discrete
Runge-Kutta schemes for nonlinear diffusion equations.
{\em Commun. Math. Sci.} 15 (2017), 27-53.

\bibitem{LeRa74} P.~Lesaint and P.~A.~Raviart. On a finite element method for solving 
the neutron transport equation. In: C.~de Boor (ed.). {\em Mathematical Aspects of 
Finite Elements in Partial Differential Equations}, pp.~89-123. 
Academic Press, New York, 1974.

\bibitem{LiWa16}
H.-L.~Liu and Z.~Wang. An entropy satisfying discontinuous Galerkin method for 
nonlinear Fokker-Planck equations. {\em J. Sci. Comput.} 68 (2016), 1217-1240.

\bibitem{MJP12} J.~Mac\'{\i}as-D\'{\i}az, S.~Jerez-Galiano, and A.~Puri. 
An explicit positivity-preserving finite-difference scheme for the classical 
Fisher-Kolmogorov-Petrovsky-Piscounov equation. 
{\em Appl. Math. Comput.} 9 (2012), 5829-5839.

\bibitem{MaRo10} V.~Maz'ya and J.~Rossmann. {\em Elliptic Equations in Polyhedral 
Domains}. American Mathematical Society, Providence, 2010.

\bibitem{Mey66} P.-A.~Meyer. {\em Probability and Potentials}. Blaisdell Publishing,
Toronto, 1966. 

\bibitem{Nit71} J.~Nitsche. \"Uber ein Variationsprinzip zur L\"osung von 
Dirichlet-Problemen bei Verwendung von Teilr\"aumen, die keinen Randbedingungen 
unterworfen sind. {\em Abh. Math. Sem. Univ. Hamburg} 36 (1971), 9-15.

\bibitem{PCFN16} M.~Parsani, M.~Carpenter, T.~Fisher, and E.~Nielsen. Entropy 
stable staggered grid discontinuous spectral collocation methods of any 
order for the compressible Navier-Stokes equations. 
{\em SIAM J. Sci. Comput.} 38 (2016), A3129-A3162.

\bibitem{PeWh78} P.~Percell and M.~Wheeler. A local residual finite element procedure
for elliptic problems. {\em SIAM J. Numer. Anal.} 15 (1978), 705-714.

\bibitem{PPOR00} S.~Prudhomme, F.~Pascal, J.~T.~Oden, and A.~Romkes. 
Review of {\em a priori} error estimation for discontinuous Galerkin methods. 
TICAM REPORT 00-27, Texas Institute for Computational and Applied Mathematics, Austin,
USA, 2000.

\bibitem{ReHi73} W.~Reed and T.~Hill. Triangular mesh methods for the neutron 
transport equation. Proceedings of the American Nuclear Society for the
conference ``National topical meeting on mathematical models and computational 
techniques for analysis of nuclear systems'', Ann Arbor, Michigan, USA, 1973, 22 pages.

\bibitem{RWG01} B.~Rivi\`ere, M.~Wheeler, and Girault. A priori error estimates for 
finite element methods based on discontinuous	approximation spaces for elliptic 
problems. {\em SIAM J. Numer. Anal.} 39 (2001), 902-931.

\bibitem{SCS18}
Z.~Sun, J.~A.~Carrillo, and C.-W.~Shu. A discontinuous Galerkin method for nonlinear 
parabolic equations and gradient flow problems with interaction potentials.
{\em J. Comput. Phys.} 352 (2018), 76-104.

\bibitem{Whe78} M.~Wheeler. An elliptic collocation-finite element method with interior
penalties. {\em SIAM J. Numer. Anal.} 15 (1978), 152-161.

\bibitem{YaJi17}
O.~P.~Yadav and R.~Jiwari. Finite element analysis and approximation of Burgers-Fisher
equation. {\em Numer. Meth. Part. Diff. Eqs.} 33 (2017), 1652-1677. 

\bibitem{ZZJLAY16}
R.~Zhang, J.~Zhu, Jiang, A.~Loula, F.~D.~Abimael, and X.~Yu. Operator splitting 
combined with positivity-preserving discontinuous Galerkin method for the 
chemotaxis model. {\em J. Comput. Appl. Math.} 302 (2016), 312-326. 
\end{thebibliography}
\end{document}